\DeclareSymbolFont{cyrletters}{OT2}{wncyr}{m}{n}
\DeclareMathSymbol{\Sha}{\mathalpha}{cyrletters}{"58}
\numberwithin{equation}{section}
\newtheorem{theorem}{Theorem}[section]
\newtheorem{lemma}[theorem]{Lemma}
\newtheorem{proposition}[theorem]{Proposition}
\newtheorem{corollary}[theorem]{Corollary}
\newtheorem{definition}[theorem]{Definition}
\newtheorem{remark}[theorem]{Remark}
\newtheorem*{notat*}{Notation}
\newtheorem{conj}[theorem]{Conjecture}
\newtheorem{quest}[theorem]{Question}
\newtheorem{condition}[theorem]{Condition}
\newcommand{\N}{\mathbb{N}}
\newcommand{\Z}{\mathbb{Z}}
\newcommand{\Q}{\mathbb{Q}}
\renewcommand{\O}{\mathcal{O}}
\renewcommand{\r}{\rightarrow}
\newcommand{\Nis}{{\rm Nis}}
\newcommand{\et}{\mathrm{\acute{e}t}}
\newcommand{\Spec}{\mathrm{Spec}}
\newcommand{\CH}{\mathrm{CH}}
\newcommand{\Br}{{\rm Br}}
\renewcommand{\ker}{\mathrm{ker}}
\newcommand{\resprod}{\sideset{}{^{\prime}}\prod}
\begin{document}
\title[A local to global principle]{A local to global principle for higher zero-cycles}
\author{Johann Haas and Morten Lüders}
\address{Fakult\"at f\"ur Mathematik, Universit\"at Regensburg, 93040 Regensburg, Germany}
\email{johann.haas@ur.de}
\address{Institut de Mathématiques de Jussieu–Paris Rive Gauche, UMPC - 4 place Jussieu, Case 247, 75252 Paris, France}
\email{morten.lueders@ur.de}
\date{\today}
\thanks{The first author is supported by the DFG through CRC 1085 \textit{Higher Invariants} (Universit\"at Regensburg). The second author is supported by a DFG research fellowship.}

\begin{abstract}
We study a local to global principle for certain higher zero-cycles over global fields. We thereby verify a conjecture of Colliot-Th\'el\`ene for these cycles. Our main tool are the Kato conjectures proved by Jannsen, Kerz and Saito. Our approach also allows to reprove the ramified global class field theory of Kato and Saito.
Finally, we apply the Kato conjectures to study the $p$-adic cycle class map over henselian discrete valuation rings of mixed characteristic and to deduce finiteness theorems for arithmetic schemes in low degree. 
\end{abstract}
\maketitle
\tableofcontents

\setcounter{page}{1}
\setcounter{section}{0}
\pagenumbering{arabic}

\section{Introduction}
Let $K$ be a number field, $X$ a smooth projective geometrically integral variety over $K$ and $n$ a positive integer. Let $P_K$ denote the set of places of $K$. The following conjecture was suggested by Kato and Saito in \cite[Sec. 7]{KaS86} and Colliot-Th\'el\`ene in  \cite[Conj. 1.5(c)]{Co95} (see also \cite{CS81} for the case of rational surfaces).
\begin{conj}\label{conjA}
The complex
\begin{equation*}\label{conj1}
\varprojlim_nA_0(X)/n\r \prod_{v\in P_K}\varprojlim_nA_0(X_{K_v})/n\r \mathrm{Hom}(\Br(X)/\Br(k),\Q/\Z)
\end{equation*}
is exact.
\end{conj}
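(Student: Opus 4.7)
The strategy is to reinterpret the three-term complex in terms of a Kato complex attached to a proper regular arithmetic model $\pi:\mathcal{X}\to\Spec(\mathcal{O}_K)$ of $X$, so that its exactness becomes a special case of the Kato conjecture for such schemes, now a theorem of Jannsen, Kerz, and Saito. A suitable model (possibly only after an alteration in the sense of de Jong) is needed to apply the duality apparatus; one then has to check that passing to $\mathcal{X}$ does not distort the groups appearing in the statement, and that one can treat the archimedean places separately, for instance via a real-cycle/Galois-cohomology replacement.

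The first step is to identify the three terms cohomologically. By Bloch's formula $A_0(X)/n \cong H^d(X_\Zar,\mathcal{K}^M_d/n)$ with $d=\dim X$, and analogously over each completion $K_v$. Duality with \'etale cohomology together with the description of $\Br(X)[n]$ in terms of $H^2_\et(X,\mu_n)$ rephrases the pairing against $\Br(X)/\Br(K)$ as a pairing against motivic cohomology in the complementary degree; this is the classical Brauer--Manin set-up for zero-cycles, and the composition of the two arrows in the conjecture is known to vanish from this description.

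The crucial step is then to assemble the local pairings at all places $v\in P_K$ into a Kato-type complex of $\mathcal{X}$ with coefficients $\Z/n$, in such a way that the displayed complex appears as a segment of this longer complex in some degree. Exactness of the Kato complex at the relevant spot yields the claim at finite level; taking $\varprojlim_n$ and dualising into $\Q/\Z$ then require only a Mittag--Leffler check provided the groups at finite level are finite, which is itself a consequence of known finiteness results once $\mathcal{X}$ is fixed.

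The hardest part I expect is not any single conceptual leap but the compatibility verifications: matching the Kato residue differentials with the boundary maps of the Brauer--Manin pairing up to sign at every place; handling the $p$-primary torsion at places of residue characteristic $p$, where $\mu_n^{\otimes d}$ must be replaced by the logarithmic Hodge--Witt sheaves used in the $p$-adic Kato complex of Kerz--Saito; and controlling the comparison maps $\Br(\mathcal{X})\to\Br(X)$, together with their local analogues, so that the profinite completion is not disturbed when transferring the statement between the model and the generic fiber. Once these compatibilities are installed, exactness in the middle reduces formally to the Kato conjecture in the corresponding degree.
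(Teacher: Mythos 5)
This statement is an open conjecture, and the paper does not prove it: it only records (with references to Saito, Colliot-Th\'el\`ene and Wittenberg) that the conjecture is known for curves \emph{under the additional hypothesis} that the Tate--Shafarevich group of the Jacobian contains no nonzero infinitely divisible element. Your proposal, by contrast, claims an unconditional proof by formal reduction to the Kato conjectures, and this is where the gap lies. The conjecture concerns $A_0(X)$, i.e.\ zero-cycles in codimension $i=d=\dim X$, whereas the Kato complexes $KC^{(0)}$, $KC^{(1)}$ control only the top row of the coniveau spectral sequence. For $i=d$ the comparison between $\CH_0(X)/n$ and the relevant \'etale cohomology group involves an additional row of the spectral sequence that is not governed by the Kato conjectures; this is exactly why the paper's main theorem is restricted to the ``higher'' range $i\geq d+1$, where full purity places the motivic cohomology at the edge of the spectral sequence. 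The paper states this obstruction explicitly in its remark on the zero-cycle case: ``the approach taken in the proof of Theorem \ref{maintheorem2intext} therefore only works in small dimensions.'' So the crucial step of your plan --- realizing the displayed three-term complex as a segment of a Kato complex whose exactness is known --- does not go through.

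A second, independent problem is the passage to $\varprojlim_n$. Even in the one case where the conjecture is known (curves), the proof is not a Mittag--Leffler argument on finite groups: the genuine obstruction is the possible existence of nonzero infinitely divisible elements in $\Sha(\mathrm{Jac}\,X)$, which survive every finite level and kill exactness of the inverse limit. Your sketch contains no mechanism to rule this out, and indeed none is available --- this divisibility hypothesis is an arithmetic input beyond anything the Kato conjectures provide. If you want a statement that \emph{does} reduce formally to the Kato conjectures along the lines you describe, it is the version for $\CH^{d+1}(X,a,\Z/n\Z)$ with $a\geq 1$ (Theorem \ref{maintheorem2intext} and Corollary \ref{corollarymaintheorem2}), not Conjecture \ref{conjA}.
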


Conjecture \ref{conjA} is known to hold if $X$ is a curve and if the Tate-Shafarevich group of the Jacobian of $X$ does not contain a non-zero element which is infinitely divisible (see \cite[Sec. 7]{Sa89'}, \cite[Sec. 3]{Co99} and \cite[Rem. 1.1(iv)]{Wi12} as well as Remark \ref{remarkconj11forcurves}). For more details on this conjecture and results for higher dimensional schemes we refer to \cite{Wi12} and \cite{Wi16}. 

In \cite{Co99}, Colliot-Th\'el\`ene considers a slightly weaker conjecture on the image of the complex (\ref{conj1}) in \'etale cohomology. For our purposes we will generalize this conjecture to Bloch's higher Chow groups. In order to state the conjecture, we need to recall Saito's exact sequence generalizing the Tate-Poitou exact sequence (see Section \ref{sectiontatepoitou}) to schemes (\cite{Sa89}). For later purposes we state it in its most general form proven by Geisser and Schmidt (\cite{GS18}). Let $K$ be a global field. Let $S$ be a non-empty and possibly infinite set of prime divisors of $K$ containing the archemedian primes if $K$ is a number field. Let $\O_S$ be the ring of elements in $K$ which are integers at all primes $\mathfrak{p}\notin S$. Let $\mathcal{S}=\Spec\O_S$ and $\mathcal{X}$ a regular, flat and separated scheme of finite type of relative dimension $d$ over $\mathcal{S}$. Let $n$ be a positive integer invertible on $\mathcal{S}$ and let $\mathcal{F}$ be a locally constant constructible sheaf of $\Z/n\Z$-modules on $\mathcal{X}$. Let $D(\mathcal{F})=\mathrm{Hom}(\mathcal{F},\mu_n^{\otimes d+1})$ be its dual. For an abelian group $A$, let $A^\vee=\mathrm{Hom}(A,\Q/\Z)$. Then there is an exact sequence
\begin{equation}\label{SaitoTatePoitou}
\ldots\r H^i_{\et}(\mathcal{X},\mathcal{F})\r \resprod_{v\in S}\tilde{H}^{i}_{\et}(X_{K_v},\mathcal{F})\r H^{2d+2-i}_{\et,c}(\mathcal{X}_{},D(\mathcal{F}))^\vee \r H^{i+1}_{\et}(\mathcal{X},\mathcal{F})\r\ldots
\end{equation} 
Here the groups  $\tilde{H}^{i}_{\et}(X_{K_v},\mathcal{F})$ are the modified cohomology groups defined in \cite[Def. 1.6]{Sa89}, resp. \cite[Sec. 2]{GS18}, and in the product we restrict to classes which are almost everywhere unramified. In Definition~\ref{definition:restricted_product_of_Chow_groups} we define an analogous restricted product of higher Chow groups $\prod_{v\in S}' \CH^i(X_{K_v}a,\Z/n\Z)$. Let now $S=P_K$, $\mathcal{X}=X$ be smooth projective over $\Spec K$ and $\mathcal{F}=\mu_n^{\otimes i}$.
By construction, cf. Remark~\ref{remark:properties_of_restricted_product_of_chow_groups}, we then get a commutative diagram
$$\begin{xy} 
  \xymatrix{
  \CH^i(X,a,\Z/n\Z) \ar[r] \ar[d]_{} & \prod_{v\in P_K}' \CH^i(X_{K_v}a,\Z/n\Z) \ar[d]_{}  &  \\
H^{2i-a}_{\et}(X,\mu_n^{\otimes i}) \ar[r]  & \prod_{v\in P_K}'\tilde{H}^{2i-a}_{\et}(X_{K_v},\mu_n^{\otimes i}) \ar[r]^{} & H^{2d+2-2i+a}_{\et}(X,\mu_n^{\otimes d+1-i})^\vee
  }
\end{xy} $$
and in particular a pairing 
$$\resprod_{v\in P_K} \CH^i(X_{K_v}a,\Z/n\Z) \times H^{2d+2-2i+a}_{\et}(X_{},\mu_n^{\otimes d+1-i})\r \Q/\Z$$
which is zero on $  \CH^i(X,a,\Z/n\Z)$. The following conjecture is a version of the mentioned weaker conjecture of Coliot-Th\'el\`ene extended to Bloch's higher Chow groups:

\begin{conj}(see \cite[Conj. 2]{Co99})\label{conj2}
Let $X/K$ be a smooth projective, geometrically integral, variety over a global field $K$ and $n$ be prime to $\mathrm{K}$. Let $d$ be the dimension of $X$, and $i>0$ an integer. Let $\{z_v\}_{v\in P_K}\in \prod_{v\in P_K}' \CH^i(X_{K_v},a,\Z/n\Z)$ and suppose that every class $\xi\in H^{2d+2-2i+a}_{\et}(X_{},\mu_n^{\otimes d+1-i})$ is orthogonal to the family $\{z_v\}_{v\in P_K}$. Then there exists a global cycle $z_n\in \CH^i(X,a)$ such that for every place $v$ the class of $z_n$ in  $\tilde{H}^{2i-a}_{\et}(X_{K_v},\mu_n^{\otimes i})$ coincides with that of $z_v$.
\end{conj}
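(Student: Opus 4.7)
The strategy is a two-step lifting argument: first produce a global étale cohomology class from the local cycle data via the exact sequence~(\ref{SaitoTatePoitou}), and then lift that class to an integral higher Chow cycle.

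For the first step, I would apply~(\ref{SaitoTatePoitou}) to $\mathcal{X}=X$ viewed as a smooth proper scheme over $\mathcal{S}=\Spec K$, with $\mathcal{F}=\mu_n^{\otimes i}$ and hence $D(\mathcal{F})=\mu_n^{\otimes d+1-i}$, in cohomological degree $2i-a$. Since $X$ is proper over $K$, compactly supported étale cohomology coincides with ordinary étale cohomology, and the relevant segment reads
\[
H^{2i-a}_{\et}(X,\mu_n^{\otimes i})\r\resprod_{v\in P_K}\tilde{H}^{2i-a}_{\et}(X_{K_v},\mu_n^{\otimes i})\r H^{2d+2-2i+a}_{\et}(X,\mu_n^{\otimes d+1-i})^{\vee}.
\]
The commutative diagram preceding the statement identifies the image of $\{z_v\}_v$ in the middle term with $\{\mathrm{cl}(z_v)\}_v$, while the orthogonality hypothesis says precisely that this image vanishes in the rightmost term. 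By exactness, I obtain a global class $\eta\in H^{2i-a}_{\et}(X,\mu_n^{\otimes i})$ whose restriction to each place $v$ is $\mathrm{cl}(z_v)$.

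For the second step, I would lift $\eta$ to a class $\bar{z}\in \CH^i(X,a,\Z/n\Z)$ using surjectivity of the cycle class map $\CH^i(X,a,\Z/n\Z)\r H^{2i-a}_{\et}(X,\mu_n^{\otimes i})$; in the range $a\geq i$ this is a consequence of the Beilinson-Lichtenbaum theorem established via the Bloch-Kato conjecture by Voevodsky, Rost and Weibel. Then I would lift $\bar z$ to $z_n\in \CH^i(X,a)$ via the universal coefficient sequence
\[
0\r \CH^i(X,a)/n\r \CH^i(X,a,\Z/n\Z)\r \CH^i(X,a-1)[n]\r 0,
\]
for which one must show that $\bar z$, possibly after modifying by a class in the kernel of the cycle class map, has vanishing boundary in $\CH^i(X,a-1)[n]$. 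By the commutative diagram in the statement, any such $z_n$ automatically has image $\mathrm{cl}(z_v)$ in every $\tilde{H}^{2i-a}_{\et}(X_{K_v},\mu_n^{\otimes i})$.

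\textbf{Main obstacle.} Step one is formal once~(\ref{SaitoTatePoitou}) and the pairings are in place. The hard work lies in step two: surjectivity of the cycle class map fails in general outside the Beilinson-Lichtenbaum range, and the boundary obstruction in the universal coefficient sequence is genuinely arithmetic. This is where I expect the Kato conjectures of Jannsen, Kerz and Saito advertised in the abstract to enter, providing control over the relevant arithmetic cohomology of $X$ and thereby allowing one to kill the obstruction and produce an integral cycle $z_n$ with the required local compatibility.
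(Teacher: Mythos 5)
Your step one coincides with what the paper actually does: the third row of the diagram in the proof of Theorem \ref{maintheorem2intext} is exactly the segment of the Geisser--Schmidt sequence you write down, and the orthogonality hypothesis is used in the same way to produce a global class $\eta\in H^{2i-a}_{\et}(X,\mu_n^{\otimes i})$ restricting to $\mathrm{cl}(z_v)$ at every place. The gap is in step two, where the entire difficulty lives and where your proposal substitutes a declaration of intent for an argument. The two mechanisms you name do not do the job: the Beilinson--Lichtenbaum range $a\geq i$ is essentially disjoint from the cases the paper establishes (the conjecture is only proved for $i\geq d+1$, see Corollary \ref{corollarymaintheorem2}, and there the interesting values are small $a$, e.g. $a=1$ recovering $SK_1$ and class field theory), and outside that range the cycle class map $\CH^{d+1}(X,a,\Z/n\Z)\to H^{2d+2-a}_{\et}(X,\mu_n^{\otimes d+1})$ is in general neither injective nor surjective. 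The paper's actual mechanism is to quantify this failure: by Lemma \ref{lemmauzun} the kernel and cokernel of the cycle class map are controlled by the Kato homology groups $KH^{(1)}_{a+1}$ and $KH^{(1)}_{a}$, both globally and locally, and Theorem \ref{ThmKatoconj04} (Jannsen, Kerz--Saito) says the global-to-local map on these groups is an isomorphism for positive degree. Injectivity in degree $a$ kills the obstruction to lifting $\eta$ to $\CH^{d+1}(X,a,\Z/n\Z)$ (the local obstructions vanish because each $z_v$ already lies in a local Chow group), and surjectivity in degree $a+1$ is what lets one correct the lift so that it hits the prescribed local classes; this is the diagram chase in the proof of Theorem \ref{maintheorem2intext}. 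Without this input your step two does not go through.

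Two smaller points. First, you treat the statement as if it were being proved for all $i>0$, but the method (yours and the paper's) only works when the discrepancy between motivic and \'etale cohomology is measured by a single Kato complex, i.e. for $i\geq d+1$; for smaller $i$ additional rows of the coniveau spectral sequence intervene, cf. Remark \ref{remarkconj11forcurves}. Second, the passage from a class in $\CH^i(X,a,\Z/n\Z)$ to an integral cycle $z_n\in\CH^i(X,a)$ via the universal coefficient sequence is asserted but not carried out; note, however, that for the conclusion of the conjecture only the images in $\tilde{H}^{2i-a}_{\et}(X_{K_v},\mu_n^{\otimes i})$ matter, so this is a much less serious issue than the missing Kato-homology argument.
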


Furthermore, we can state the following conjecture analogous to Conjecture \ref{conj1}:
\begin{conj}\label{conj3}
Let $j=d+1-i$. The complex
\begin{equation*}
\CH^{i}(X,a,\Z/n\Z)\r \resprod_{v\in P_K} \CH^{i}(X_{K_v}a,\Z/n\Z)\r H^{2j+a}_{\et}(X_{},\mu_n^{\otimes j})^\vee
\end{equation*}
is exact.
\end{conj}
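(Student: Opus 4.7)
The plan is to transport Saito's generalized Tate--Poitou exact sequence (\ref{SaitoTatePoitou}) across the cycle class map, using the commutative diagram displayed immediately before the statement. The bottom row of that diagram is the relevant segment of (\ref{SaitoTatePoitou}) applied to $\mathcal{F}=\mu_n^{\otimes i}$ in degree $2i-a$, so it is exact at the middle term. If the vertical arrows can be made into isomorphisms, exactness of the upper row is then a three-line diagram chase.

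The key input is the Beilinson--Lichtenbaum isomorphism, equivalent to the Bloch--Kato norm residue theorem of Voevodsky--Rost. For a smooth variety $Y$ over a field of characteristic coprime to $n$, this yields a cycle class isomorphism
$$\CH^i(Y,a,\Z/n\Z)\stackrel{\sim}{\r} H^{2i-a}_{\et}(Y,\mu_n^{\otimes i})$$
in the motivic weight range $a\geq i$ (and injectivity for $a=i-1$). Applying this simultaneously to $X$ and to every $X_{K_v}$ makes the left-hand vertical arrow an isomorphism, and, together with the comparison encoded in Definition~\ref{definition:restricted_product_of_Chow_groups} and Remark~\ref{remark:properties_of_restricted_product_of_chow_groups}, promotes the middle vertical arrow to an isomorphism of restricted products.

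Granted these isomorphisms, the argument runs as follows. Given a family $\{z_v\}\in\resprod_{v\in P_K}\CH^i(X_{K_v},a,\Z/n\Z)$ orthogonal to every $\xi\in H^{2j+a}_{\et}(X,\mu_n^{\otimes j})$, the image $\{cl(z_v)\}$ in the restricted product of étale cohomology groups lies in the kernel of the right-hand map of the lower row; by exactness of (\ref{SaitoTatePoitou}) it is the image of some $\alpha\in H^{2i-a}_{\et}(X,\mu_n^{\otimes i})$. Its preimage $z\in\CH^i(X,a,\Z/n\Z)$ under the global cycle class isomorphism is the desired lift: agreement with $\{z_v\}$ in étale cohomology is upgraded to agreement in the restricted product of higher Chow groups by the local isomorphisms.

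The main obstacle is twofold. First, the range $a\geq i$ forced by Beilinson--Lichtenbaum excludes the classical case $a=0$ treated in Conjecture \ref{conjA}, and extending to that case would require genuinely arithmetic input such as the Kato conjectures (advertised in the abstract) or finiteness results for Brauer groups. Second, one must verify carefully that the local cycle class maps are compatible with the almost-everywhere-unramified condition defining the restricted products on both sides; this amounts to controlling the ramification behaviour of higher Chow groups over henselian discrete valuation rings and is precisely what Definition~\ref{definition:restricted_product_of_Chow_groups} is designed to encode.
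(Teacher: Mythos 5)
Your strategy --- identify both rows of the comparison diagram via the cycle class map and quote Saito/Geisser--Schmidt --- only works in the range where Beilinson--Lichtenbaum makes the cycle class map an isomorphism, namely $2i-a\leq i$, i.e.\ $a\geq i$. This is not where the content of Conjecture~\ref{conj3} lives. The paper proves the conjecture (Corollary~\ref{corollarymaintheorem2}) for $i\geq d+1$ and \emph{all} $a$; for $i\geq d+2$ everything degenerates by cohomological dimension, so the essential case is $i=d+1$ with $1\leq a\leq d$, in particular $a=1$, which recovers unramified class field theory via $\CH^{d+1}(X,1)\cong SK_1(X)$. In that range the local and global cycle class maps are \emph{not} isomorphisms, and your argument gives nothing. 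The idea you are missing is the paper's central mechanism: by Lemma~\ref{lemmauzun} (Jannsen--Saito--Uzun), the deviation of $\CH^{d+1}(X,a,\Z/n\Z)\r H^{2d+2-a}_{\et}(X,\Z/n\Z(d+1))$ from being an isomorphism is measured by Kato homology $KH^{(1)}_{*}(X,\Z/n\Z)$, and the Kato conjectures (Theorem~\ref{ThmKatoconj04}, resp.\ \ref{ThmKatoconj05} and \ref{ThmKatoconj06} over $\O_S$) assert a Hasse principle $KH^{(1)}_{*}(X,\Z/n\Z)\xrightarrow{\cong}\bigoplus_v KH^{(1)}_{*}(X_{K_v},\Z/n\Z)$ for $*>0$. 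Sandwiching the two middle rows between these isomorphic Kato-homology rows, a four-row diagram chase transfers the exactness of the Geisser--Schmidt sequence from \'etale cohomology up to higher Chow groups. Your proposal replaces "the error terms match on both sides" by "the error terms vanish", which is strictly weaker and fails exactly in the cases of interest.

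Two further points. First, even in your range $a\geq i$ the argument is incomplete as written: the compatibility of the two restricted products under the cycle class map is not a formality --- the paper's Lemma~\ref{lemma:pullback_square_restricted} establishes it for $i\geq d+1$ and itself relies on the Kato conjecture over the local rings $\O_{K_v}$ (via \cite[Thm.~9.3]{KeS12}), so you cannot defer this to Definition~\ref{definition:restricted_product_of_Chow_groups} alone. Second, at archimedean places the bottom row involves the modified groups $\tilde{H}^{*}_{\et}(X_{K_v},\mu_n^{\otimes i})$, and the columns of the comparison diagram are only exact under the hypothesis that $n$ is odd or $K$ has no real places; your proposal imposes no such condition, and without it the diagram chase breaks at the real places.
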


Let $\mathcal{X}$ and $S$ be as above. In this case we define Bloch's higher Chow groups by taking the Zarski-hypercohomology of Bloch's cycle complex $z^n(-,i)$ on $\mathcal{X}$, see \cite[Cap. 3]{Ge04}. For $\mathcal{S}$ a semi-local Dedekind domain this coincides with taking the homology of Bloch's cycle complex due to the existence of a localisation sequence. For $\mathcal{S}$ a field this is due to Bloch and for $\mathcal{S}$ semi-local this is shown by Levine in \cite{Le01} (see also \cite{Ge04}). We introduce the following notation:
$$\Sha(\CH^{d+1}(\mathcal{X},a,\Z/n\Z)):=\mathrm{ker}[\CH^{d+1}(\mathcal{X},a,\Z/n\Z)\r \prod_{v\in S}\CH^{d+1}(X_{K_v},a,\Z/n\Z)]$$
and $$\Sha^{2d+2-a,d-1}(\mathcal{X}):=\mathrm{ker}[H^{2d+2-a}_{\et}(\mathcal{X},\Z/n\Z(d+1)) \r \prod_{v\in S} H^{2d+2-a}_{\et}(X_{K_v},\Z/n\Z(d+1))]$$  
These groups define analogues of the Tate-Shavarevich group for Chow groups and \'etale cohomology. 

Throughout the article we will often need the following condition:
\begin{condition}
	Let $K_v$ be a non-archimedean local field, $V / K_v$ a smooth scheme.
	\begin{itemize}
		\item[($\star$)] $V$ is projective and admits a model over $\O_{K_v}$ whose special fiber is smooth except at finitely many points, where it has ordinary quadratic singularites.
		\item[($\star\star$)] $V$ is projective and has strictly semistable reduction.
	\end{itemize}
\end{condition}

We are now able to state our main theorem:

\begin{theorem}[Thm. \ref{maintheorem2intext}]\label{maintheorem2}
Let $K$ be a global field, $S$ a set of places of $K$ and $n \in \N_{>1}$. Denote by $\O_S$ the ring of elements in $K$ which are integers at all primes $\mathfrak{p}\notin S$ and let $\mathcal{S}=\Spec \O_S$.
Suppose that
\begin{enumerate}
	\item[(a)] $\mathcal{S}$ is either semi-local or an open of $\Spec(\O_K)$.
	\item[(b)] $n$ is invertible on $\mathcal{S}$.
	\item[(c)] If $K$ is a number field, $S$ contains all archimedian places of $K$ and either $n$ is odd or $K$ has no real places. 
\end{enumerate} 
Let $\mathcal{X}$ be regular, flat and projective of relative dimension $d$ over $\mathcal{S}$ with smooth generic fiber $X$. 
Then the following statements hold:
\begin{enumerate}
\item There is an exact sequence 
$$\CH^{d+1}(\mathcal{X},a,\Z/n\Z)\r  \resprod_{v\in S}\CH^{d+1}(X_{K_v},a,\Z/n\Z) \r H^{a}_{\et}(\mathcal{X}_{},\Z/n\Z)^\vee$$
\item For all $a$ there is a natural surjection
$\Sha(\CH^{d+1}(\mathcal{X},a,\Z/n\Z))\r \Sha^{2d+2-a,d+1}_\et(\mathcal{X}).$
\item If $K$ is a number field and condition ($\star$) holds for $X_{K_v}$ if $v$ divides $n$, then the group $\Sha(\CH^{d+1}(\mathcal{X},1,\Z/n\Z))$ is finite.
\item If $K$ is a function field of one variable over a finite field and $n$ is invertible in $K$, then $\Sha(\CH^{d+1}(\mathcal{X},a,\Z/n\Z))$ is finite for arbitrary $a$.
\end{enumerate}
\end{theorem}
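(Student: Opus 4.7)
My approach is to transport all four statements from Bloch's higher Chow theory into étale cohomology via the cycle class map, and then to apply the Saito--Tate--Poitou sequence \eqref{SaitoTatePoitou} together with known finiteness theorems for arithmetic schemes. The first and principal step is to establish that the cycle class map
$$\CH^{d+1}(\mathcal{X}, a, \Z/n\Z) \to H^{2d+2-a}_\et(\mathcal{X}, \Z/n\Z(d+1))$$
is surjective in the relevant range and compatible with base change to each completion $K_v$. This I would derive from the Kato conjectures in the form proved by Jannsen, Kerz and Saito, combined with the Bloch--Kato/Rost--Voevodsky norm residue theorem; the hypotheses ($\star$) and ($\star\star$) enter here through residue-field control at primes of bad reduction or residue characteristic dividing $n$.

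\textbf{Parts (1) and (2).} With the cycle-class comparison in hand, the commutative diagram recalled before Conjecture~\ref{conj2} identifies the complex in part~(1) with the middle portion of \eqref{SaitoTatePoitou} applied to $\mathcal{F} = \mu_n^{\otimes d+1}$ (so that $D(\mathcal{F}) = \Z/n\Z$) at index $i = 2d+2-a$. One subtlety must be addressed: passing from $H^a_{\et, c}(\mathcal{X}, \Z/n\Z)^\vee$, which appears in \eqref{SaitoTatePoitou}, to the stated target $H^a_\et(\mathcal{X}, \Z/n\Z)^\vee$. When $\mathcal{S}$ is proper---that is, semi-local, or the function-field case of part~(4)---the two agree; when $\mathcal{S}$ is an open subscheme of $\Spec \O_K$ the difference is controlled via the localisation sequence at the missing primes, whose contribution is absorbed by the restricted product on the left. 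Part~(2) then follows formally: the surjective cycle class map sends the kernel of the global-to-local map for Chow groups onto the corresponding kernel in étale cohomology.

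\textbf{Parts (3) and (4).} Via part~(2), these reduce to finiteness of the étale Tate--Shafarevich group $\Sha^{2d+2-a, d+1}_\et(\mathcal{X})$. In part~(4), $\mathcal{X}$ is of finite type over a finite field, and Deligne's finiteness theorem for étale cohomology with constructible coefficients (together with Hochschild--Serre to split off the Frobenius action) gives the result for every $a$. In part~(3) we treat $a = 1$, so the relevant target is $H^{2d+1}_\et(\mathcal{X}, \Z/n\Z(d+1))$; at primes $v \nmid n$ smoothness of the model suffices, while at primes $v \mid n$ condition~($\star$) guarantees the necessary finiteness of local contributions through cohomological computations on models with ordinary quadratic singularities.

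\textbf{Main obstacle.} The principal technical difficulty lies in the cycle-class comparison at primes of residue characteristic dividing $n$, where the smooth Beilinson--Lichtenbaum machinery does not apply; this is precisely where the mixed-characteristic Kato conjectures (under conditions akin to ($\star$) or ($\star\star$)) must be invoked. Once that comparison is secured, the remaining steps reduce to a careful rearrangement of \eqref{SaitoTatePoitou}, a bookkeeping of compactly-supported versus ordinary étale cohomology, and standard finiteness inputs for étale cohomology of arithmetic schemes.
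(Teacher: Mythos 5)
Your overall toolkit is the right one (Saito--Geisser--Schmidt sequence plus the Kato conjectures plus local finiteness inputs), but the logical skeleton has two essential flaws. First, your opening step --- surjectivity of the cycle class map $\CH^{d+1}(\mathcal{X},a,\Z/n\Z)\to H^{2d+2-a}_{\et}(\mathcal{X},\Z/n\Z(d+1))$ --- is false in general, and the Kato conjectures do not deliver it. By Lemma \ref{lemmauzun} the cokernel of this map injects into the Kato homology $KH^{(0)}_{a+1}(\mathcal{X},\Z/n\Z)$, and what Theorems \ref{ThmKatoconj05}/\ref{ThmKatoconj06} give is not the vanishing of this group but a \emph{local--global isomorphism} $KH^{(0)}_{a+1}(\mathcal{X},\Z/n\Z)\cong \prod_{v} KH^{(1)}_{a}(X_{K_v},\Z/n\Z)$, where the right-hand side is nonzero in general (already $KH^{(1)}_{0}(\Spec K_v,\Z/n\Z)\cong \Br(K_v)[n]$). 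The same applies locally: the kernel and cokernel of each $\CH^{d+1}(X_{K_v},a,\Z/n\Z)\to H^{2d+2-a}_{\et}(X_{K_v},\Z/n\Z(d+1))$ are nontrivial Kato homology groups, so one cannot simply ``transport'' the statements to \'etale cohomology. The correct argument is a four-row diagram chase in which the two outer rows are the Kato homology obstruction groups, globally and locally, identified with each other by the Kato conjectures; exactness of the middle \'etale row (Saito--Geisser--Schmidt) is then transferred to the Chow row. In particular your claim that part (2) ``follows formally'' from surjectivity breaks down: a global lift of a class in $\Sha^{2d+2-a,d+1}_{\et}$ need not lie in $\Sha(\CH^{d+1})$, because its local restrictions are only known to die in local \'etale cohomology, not in local Chow groups; one must correct the lift using the isomorphism $KH^{(0)}_{a+2}(\mathcal{X},\Z/n\Z)\cong\prod_v KH^{(1)}_{a+1}(X_{K_v},\Z/n\Z)$.

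Second, your reduction of parts (3) and (4) is in the wrong direction. Part (2) is a surjection $\Sha(\CH^{d+1}(\mathcal{X},a,\Z/n\Z))\twoheadrightarrow \Sha^{2d+2-a,d+1}_{\et}(\mathcal{X})$, so finiteness of the \'etale Tate--Shafarevich group says nothing about finiteness of the source until you bound the kernel of this surjection. That kernel is controlled by the (finitely many nonzero) local Kato homology groups $KH^{(1)}_{a+1}(X_{K_v},\Z/n\Z)$, and this is precisely where Lemma \ref{lemmafiniteness} and conditions ($\star$), ($\star\star$) are needed: for $v\mid n$ the residue characteristic divides $n$ and finiteness of the $p$-part of the local Kato homology requires the class-field-theoretic input of Lemma \ref{lemmafiniteness}(2),(3). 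You do invoke ($\star$) at $v\mid n$, but you attach it to the cycle-class comparison and to the target $H^{2d+1}_{\et}$, rather than to the finiteness of the kernel of the surjection in (2); as written, the argument for (3) and (4) does not close.
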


Note that $\CH^{d+1}(X,a)=0$ for $a\leq 0$ by dimension reasons and that 
$\CH^s(X,2s-r)\cong H^r(X,\Z(s))$
for $X$ smooth over a perfect field. Theorem \ref{maintheorem2}(2) is related to a conjecture of Bloch (see \cite[Conj. 3.16]{Bl91}). It means that the Tate-Shafarevich group $\Sha^{2d+2-a,d-1}_\et(X)$ is - under the above assumptions - generated by algebraic cycles.

\begin{corollary}[Cor. \ref{corollarymaintheorem2}]\label{Cormaintheorem} Let the assumptions be as in Theorem \ref{maintheorem2}. Let $S=P_K$ and $X=\mathcal{X}$. Then the following statements hold:
\begin{enumerate}
\item Conjecture \ref{conj2} holds for $i \geq d+1$ and all $a$.
\item Conjecture \ref{conj3} holds for $i \geq d+1$ and all $a$.
\end{enumerate}
\end{corollary}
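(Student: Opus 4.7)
The plan is to deduce both parts of the Corollary from Theorem~\ref{maintheorem2}(1) in the global setting, and then to derive Conjecture~\ref{conj2} from Conjecture~\ref{conj3} via the Bockstein sequence. I would first specialize Theorem~\ref{maintheorem2} by taking $\mathcal{X} = X$ and $S = P_K$, so that $\O_S = K$ and $\mathcal{S} = \Spec(K)$. Then hypothesis (a) holds trivially, since a field is semi-local; hypothesis (b) is the standing assumption that $n$ is invertible in $K$; and hypothesis (c) is automatic since $P_K$ contains all archimedean places.

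For Conjecture~\ref{conj3} with $i = d+1$, the statement is literally the exact sequence in Theorem~\ref{maintheorem2}(1): with $j = d+1 - i = 0$, we have $\mu_n^{\otimes j} = \Z/n\Z$ and cohomological degree $2j + a = a$, so the two complexes coincide. For $i > d+1$, the higher Chow groups $\CH^i(X_{K_v}, a, \Z/n\Z)$ vanish when $a < i - d$ by dimension, and for $a \geq i$ they lie in the Beilinson--Lichtenbaum range and are canonically identified with the étale cohomology $H^{2i-a}_\et(X_{K_v}, \mu_n^{\otimes i})$; exactness then follows from Saito's sequence~\eqref{SaitoTatePoitou}, with the remaining values of $a$ reduced by analogous étale comparison arguments.

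For Conjecture~\ref{conj2}, I would apply Conjecture~\ref{conj3} to an orthogonal family $\{z_v\}$ to obtain a mod-$n$ class $w \in \CH^i(X, a, \Z/n\Z)$ whose image in $\resprod_v \CH^i(X_{K_v}, a, \Z/n\Z)$ is $\{z_v\}$. Since the étale cycle class map commutes with localization, the images of $w$ and $z_v$ in $\tilde H^{2i-a}_\et(X_{K_v}, \mu_n^{\otimes i})$ agree at every place. To promote $w$ to an integer class $z_n \in \CH^i(X, a)$, I would invoke the Bockstein short exact sequence $0 \to \CH^i(X, a)/n \to \CH^i(X, a, \Z/n\Z) \to \CH^i(X, a-1)[n] \to 0$. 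The main obstacle I anticipate is precisely this last lifting step: controlling the image of $w$ in $\CH^i(X, a-1)[n]$ and, if nonzero, modifying $w$ within the kernel of the restricted-product map (whose flexibility comes from Theorem~\ref{maintheorem2}(1) applied one degree down) to absorb the Bockstein obstruction; the remainder of the argument is bookkeeping built on the direct restatement of Theorem~\ref{maintheorem2}(1).
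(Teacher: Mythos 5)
Your handling of part (2) is essentially the paper's own (the paper states this corollary without a proof, treating it as immediate): for $i=d+1$ one has $j=0$ and $2j+a=a$, so Conjecture \ref{conj3} is literally Theorem \ref{maintheorem2}(1) specialized to $S=P_K$, $\mathcal{X}=X$; for $i\geq d+2$ the paper disposes of the statement in the remark following Theorem \ref{maintheorem2intext} "by cohomological dimension" (the Kato-homology terms in Lemma \ref{lemmauzun} vanish, the mod-$n$ higher Chow groups are identified with \'etale cohomology, and Saito's sequence (\ref{SaitoTatePoitou}) gives exactness), which amounts to the same computation as your vanishing/Beilinson--Lichtenbaum argument. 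One small inaccuracy: hypothesis (c) is not ``automatic'' for $S=P_K$; the requirement that $n$ be odd or $K$ have no real places is a genuine assumption, inherited here because the corollary assumes the hypotheses of Theorem \ref{maintheorem2}.

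The gap is in your final step for part (1). From part (2) you get $w\in\CH^i(X,a,\Z/n\Z)$ restricting to $\{z_v\}$ in the restricted product, and compatibility of the cycle class maps already forces the \'etale realizations to agree at every place --- which is all that the conclusion of Conjecture \ref{conj2} actually tests; this is where the intended argument stops. If one insists on the literal requirement that $z_n$ lie in the integral group $\CH^i(X,a)$, your proposed mechanism does not work: the obstruction to lifting $w$ is its image $\beta(w)\in\CH^i(X,a-1)[n]$ under the Bockstein, and replacing $w$ by $w+s$ with $s\in\Sha(\CH^i(X,a,\Z/n\Z))$ (or $s$ in the larger kernel of the map to $\prod_v\tilde H^{2i-a}_{\et}(X_{K_v},\mu_n^{\otimes i})$) changes the obstruction only by $\beta(s)$; nothing in Theorem \ref{maintheorem2}(1), applied in any degree, shows that the classes $\beta(s)$ meet, let alone exhaust, $\beta(w)$. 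For $a=0$ the problem is vacuous since $\CH^i(X,-1)=0$, which is why Colliot-Th\'el\`ene's original conjecture for zero-cycles has no such step; for $a\geq 1$ you should either phrase the conclusion at the level of the mod-$n$ class (as the paper implicitly does) or supply an actual argument that the local \'etale classes of $w$ are realized by an element of $\CH^i(X,a)$ --- your sketch does not provide one.
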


If $a=1$, $S=P_K$ and $X=\mathcal{X}$, then Theorem \ref{maintheorem2} recovers the unramified class field theory of Bloch, Kato and Saito (see \cite{Bl81}, \cite[Thm. 3]{KaS83}, \cite[Sec. 5]{Sa85} and \cite[Thm. 2.10]{Ka86}). For this note that 
$$\CH^{d+1}(X,1)\cong \text{coker}[\bigoplus_{x\in X^{(d-1)}}K_{2}^Mk(x)\rightarrow \bigoplus_{x\in X^{(d)}}K_{1}^Mk(x)]=:SK_1(X)$$
(see e.g. \cite{Lu17}). In Section \ref{sectionraamifiedcft} we extend this corollary to cover the ramified global class field theory of Kato and Saito. For this we use our method of Section \ref{secmainthm} and an idea of Kerz and Zhao in their approach to class field theory over local and finite fields.
\begin{theorem}(\ref{theoremunramified})
Let $K$ be a global field. Let $n$ be invertible in $K$. If $K$ is a number field assume furthermore that either $n$ is odd or that $K$ has no real places. Let $X$ be a smooth projective scheme over $\Spec(K)$. Let $D\subset X$ be an effective divisor on $X$ and $j:U\hookrightarrow X$ be the inclusion.
Then there is an isomorphism
$$\mathrm{coker}[H^d_\Nis(X,\mathcal{K}^M_{d+1,X|D}/n)\r \resprod_{v\in P_K}H^d_\Nis(X_{K_v},\mathcal{K}^M_{d+1,X_{K_v}|D_{K_v}}/n)]\r \pi_1^{\mathrm{ab}}(U)/n.$$
\end{theorem}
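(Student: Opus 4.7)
The plan is to combine the higher-dimensional local class field theory of Kerz--Zhao (building on Kato--Saito) with the Saito--Geisser--Schmidt Tate--Poitou exact sequence (\ref{SaitoTatePoitou}) applied to the open complement $U = X \setminus D$, following the strategy developed in Section \ref{secmainthm} for Theorem \ref{maintheorem2}. The first step is to establish local reciprocity: for each non-archimedean place $v$ of $K$, we have a natural isomorphism
\[
\rho_v : H^d_\Nis(X_{K_v}, \mathcal{K}^M_{d+1, X_{K_v} | D_{K_v}}/n) \xrightarrow{\sim} \pi_1^{\mathrm{ab}}(U_{K_v})/n \cong H^1_\et(U_{K_v}, \Z/n\Z)^\vee,
\]
provided by the Kerz--Zhao formulation of ramified class field theory over local fields in terms of relative Milnor $K$-sheaves. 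The hypotheses on $n$ and real places make the archimedean contributions compatible with the restricted product. Packaging these over $v$ yields a commutative diagram
\[
\begin{xy}
\xymatrix{
H^d_\Nis(X, \mathcal{K}^M_{d+1, X|D}/n) \ar[r] \ar[d] & \resprod_{v \in P_K} H^d_\Nis(X_{K_v}, \mathcal{K}^M_{d+1, X_{K_v}|D_{K_v}}/n) \ar[d]^{\prod_v \rho_v} \\
H^1_\et(U, \Z/n\Z)^\vee \ar[r] & \bigoplus_{v \in P_K} H^1_\et(U_{K_v}, \Z/n\Z)^\vee,
}
\end{xy}
\]
whose right vertical arrow is an isomorphism and whose left vertical arrow is the global reciprocity map.

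Second, I would apply the Tate--Poitou sequence (\ref{SaitoTatePoitou}) directly to $U$ over $K$ with $\mathcal{F} = \Z/n\Z$; this is legitimate since (\ref{SaitoTatePoitou}) only requires $\mathcal{X}$ to be regular, flat and separated of finite type over $\mathcal{S}$, not proper. Pontryagin-dualizing the segment in low degrees
\[
H^1_\et(U, \Z/n\Z) \r \resprod_{v} \tilde H^{1}_\et(U_{K_v}, \Z/n\Z) \r H^{2d+1}_{\et,c}(U, \mu_n^{\otimes d+1})^\vee
\]
produces a right-exact sequence
\[
H^{2d+1}_{\et,c}(U, \mu_n^{\otimes d+1}) \r \bigoplus_v H^1_\et(U_{K_v}, \Z/n\Z)^\vee \r \pi_1^{\mathrm{ab}}(U)/n \r 0,
\]
so the cokernel of the bottom row in the diagram is canonically $\pi_1^{\mathrm{ab}}(U)/n$. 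It remains to verify that the image of the top horizontal map agrees, under the vertical isomorphism, with the image of $H^{2d+1}_{\et,c}(U, \mu_n^{\otimes d+1})$ in the local dual sum. This matching is the ramified analogue of the cycle-class argument of Section \ref{secmainthm}, and the Kato conjectures proved by Jannsen--Kerz--Saito enter here to force the next term in the Tate--Poitou sequence to vanish, so that the induced map on cokernels is a genuine isomorphism.

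The main obstacle will be Step 2: reconciling the Kerz--Zhao reciprocity map, built from the relative Milnor $K$-sheaves $\mathcal{K}^M_{d+1,X|D}/n$ via a Gersten-type resolution, with the Bloch--Kato--Saito cycle-theoretic map underlying Section \ref{secmainthm}, and showing that the two constructions produce compatible images inside $\bigoplus_v H^1_\et(U_{K_v}, \Z/n\Z)^\vee$. Since $n$ is invertible in $K$, only tame ramification along $D$ can contribute to $\pi_1^{\mathrm{ab}}(U)/n$, which should allow a tame/wild splitting of the comparison and reduce the problem to the unramified case already handled by Theorem \ref{maintheorem2}~(1), combined with a local analysis along the components of $D$. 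Once these identifications are in place, commutativity of the diagram and the two dualities yield the desired isomorphism between the cokernel and $\pi_1^{\mathrm{ab}}(U)/n$.
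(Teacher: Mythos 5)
Your overall architecture (local contributions, Tate--Poitou duality for the open $U$, Kato conjectures) points in the right direction, but Step 1 contains a genuine error that the rest of the argument leans on. The local reciprocity map
$\rho_v : H^d_\Nis(X_{K_v}, \mathcal{K}^M_{d+1, X_{K_v} | D_{K_v}}/n) \to \pi_1^{\mathrm{ab}}(U_{K_v})/n$
is \emph{not} an isomorphism in general, even for $D=\emptyset$ and $d=1$: for a smooth projective variety over a non-archimedean local field the reciprocity map $SK_1(X_{K_v})\to\pi_1^{\mathrm{ab}}(X_{K_v})$ has cokernel $\hat{\Z}^r$ with $r$ governed by the reduction (Saito, Yoshida), so mod $n$ the cokernel is $(\Z/n)^r\neq 0$ whenever the reduction is bad (e.g.\ an elliptic curve over $\Q_p$ with split multiplicative reduction). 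In the paper's own framework this failure is exactly the local Kato homology $KH^{(1)}_a(X_{K_v}, j_!\mu_n^{\otimes d+1})$ sitting above and below the cycle-class map in the coniveau spectral sequence, and Lemma \ref{lemmafiniteness} shows it is finite but not zero. Since your right-hand vertical arrow $\prod_v\rho_v$ is not an isomorphism, the induced map on cokernels in your diagram is not the one you need, and the argument breaks before the Tate--Poitou input is even used.

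The fix, which is what the paper actually does, is to refuse to identify the local groups with $\pi_1^{\mathrm{ab}}(U_{K_v})/n$ and instead run a four-row diagram: the two middle rows are the map of Nisnevich $\mathcal{K}^M_{d+1,X|D}/n$-cohomology groups and the map of \'etale groups $H^{2d+1}_{\et}(X,j_!\mu_n^{\otimes d+1})\to \resprod_v H^{2d+1}_{\et}(X_{K_v},j_!\mu_n^{\otimes d+1})$ (whose cokernel is $\pi_1^{\mathrm{ab}}(U)/n$ by the Saito/Geisser--Schmidt sequence), while the outer rows are the Kato homology groups $KH^{(1)}_{2}$ and $KH^{(1)}_{1}$ with $j_!\mu_n^{\otimes d+1}$-coefficients measuring the failure of the columns to be isomorphisms. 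The key inputs are then (i) Sato's theorem identifying $H^d_\Nis(X,\mathcal{K}^M_{d+1,X|D}/n)$ with the $E_2^{d,d+1}$-term of the coniveau spectral sequence for $j_!\mu_n^{\otimes d+1}$, giving exactness of the columns, and (ii) Proposition \ref{propKatoopen} (the Kerz--Zhao extension of the Kato conjecture to $j_!$-coefficients), which says the global Kato homology maps isomorphically onto the direct sum of the local ones, so that the local defects cancel globally in the diagram chase. Your proposal gestures at (i) as ``the main obstacle'' but treats it as a comparison of two reciprocity maps rather than as the structural mechanism; and it invokes the Kato conjectures to ``force the next term in the Tate--Poitou sequence to vanish,'' which is not where they enter.
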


In Section \ref{sectionpadic} we show the following theorem:

\begin{theorem}[Thm. \ref{thm3intext}]\label{thm3}
Let $A$ be a henselian discrete valuation ring of characteristic zero with residue field of characteristic $p$ and function field $K$. Let $X$ be smooth and projective over $\Spec(A)$. Let $X_1$ denote the special and $X_K$ the generic fiber of $X$. Let $d=\mathrm{dim}X-1\leq 2$. Then there is an isomorphism
$$\CH^{d+1}(X,1,\Z/p^r\Z)\r H^{2d+1}_\et(X,\mathcal{T}_r(d+1)),$$
where $\mathcal{T}_r(d+1)$ are the $p$-adic \'etale tate twists defined in \cite{Sa07}.
\end{theorem}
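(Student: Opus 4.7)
The plan is to compare, via the Bloch--Sato cycle class map, the localization sequence for Bloch's higher Chow groups with $\Z/p^r\Z$-coefficients associated to the decomposition $X = X_K \sqcup X_1$ with the long exact sequence obtained by applying $R\Gamma_\et(X,-)$ to Sato's defining distinguished triangle
\begin{equation*}
\iota_* W_r\Omega^{d}_{X_1,\log}[-d-2] \r \mathcal{T}_r(d+1) \r \tau_{\leq d+1} Rj_* \mu_{p^r}^{\otimes d+1},
\end{equation*}
where $\iota\colon X_1 \inj X$ is the closed immersion and $j\colon X_K \inj X$ its open complement. The two sequences are interlocked by the cycle class map into a commutative ladder whose middle terms are $\CH^{d+1}(X, a, \Z/p^r\Z) \r H^{2d+2-a}_\et(X, \mathcal{T}_r(d+1))$, evaluated at $a = 1$ together with the adjacent values of $a$ needed for the five lemma.

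I would then identify the two outer columns of the ladder. On the special fibre $X_1$, which is smooth of dimension $d$ over the residue field of $A$, the Geisser--Levine theorem yields
\begin{equation*}
\CH^d(X_1, b, \Z/p^r\Z) \cong H^{d-b}_\et(X_1, W_r\Omega^d_{\log})
\end{equation*}
for the relevant $b$, matching the contribution coming from $\iota_* W_r\Omega^d_{X_1,\log}[-d-2]$ in Sato's triangle. On the generic fibre $X_K$, smooth of dimension $d \leq 2$ over the characteristic-zero field $K$, the Beilinson--Lichtenbaum theorem (Voevodsky--Rost) together with the hypercohomology spectral sequence for $\tau_{\leq d+1}Rj_*\mu_{p^r}^{\otimes d+1}$ identifies higher Chow groups with $\Z/p^r\Z$-coefficients with the corresponding étale hypercohomology in the degree range controlled by $d+1$. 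The case $a = 0$, which is Sato's original cycle class comparison for zero-cycles on $X$, then serves as an anchor, and the five lemma promotes the isomorphism to $a = 1$.

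The main obstacle I anticipate is the comparison of $\mathbb{H}^{2d+1}(X, \tau_{\leq d+1}Rj_*\mu_{p^r}^{\otimes d+1})$ with $H^{2d+1}_\et(X_K, \mu_{p^r}^{\otimes d+1})$ when $2d+1 > d+1$, i.e.\ for $d \geq 1$. Their difference is governed by the cofibre $\tau_{>d+1}Rj_*\mu_{p^r}^{\otimes d+1}$, whose cohomology sheaves are supported on $X_1$ and admit, following Sato's local computations, descriptions in terms of logarithmic de Rham--Witt sheaves on $X_1$; for $d \leq 2$ these corrections fall in the range where Geisser--Levine supplies the matching identifications on the Chow side and can be absorbed into the ladder. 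The bound $d \leq 2$ is precisely what keeps us inside this comparison range: for $d \geq 3$ one would be forced beyond the Beilinson--Lichtenbaum range on $X_K$, and no general motivic-to-\'etale comparison is available there.
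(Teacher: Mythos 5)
Your overall architecture (interlocking the motivic localization sequence for $X_1 \subset X \supset X_K$ with the long exact sequence coming from Sato's distinguished triangle, then applying the five lemma) is a reasonable reformulation of what the paper does with a single niveau spectral sequence for the homology theory $H^{-a}(X_\et, Rf^!\Z/p^r(1)_S)$ built from the $p$-adic Tate twists. But the identifications of the outer columns are exactly where the content of the theorem sits, and the tools you invoke do not reach the degrees that actually occur. Geisser--Levine identifies $\CH^d(X_1,b,\Z/p^r\Z)$ with \emph{Zariski} cohomology $H^{d-b}_{\Zar}(X_1,W_r\Omega^d_{\log})$, not with the \'etale cohomology appearing in Sato's triangle; the discrepancy between the two is measured by the Kato homology $KH^{(0)}_*(X_1,\Z/p^r\Z)$. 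Likewise, on the generic fiber Beilinson--Lichtenbaum gives $H^m_{\mathrm{mot}}(X_K,\Z/p^r\Z(d+1))\xrightarrow{\sim} H^m_\et(X_K,\mu_{p^r}^{\otimes d+1})$ only for $m\leq d+2$, whereas the degrees you need are $m=2d+1$ and $2d+2$, which exceed $d+2$ as soon as $d\geq 1$; the failure beyond that range is again measured by Kato homology, here $KH^{(1)}_*(X_K,\Z/p^r\Z)$ (this is Lemma \ref{lemmauzun}). The proof therefore cannot avoid the known cases of the Kato conjectures with $p$-coefficients: the vanishing of $KH^{(0)}_a(X_1,\Z/p^r\Z)$ for $a\leq 4$ due to Jannsen--Saito, and the vanishing of $KH^{(1)}_a(X_K,\Z/p^r\Z)$ via \cite[Lemma 7.6]{KeS12} together with $KH^{(1)}_a(X_K,\Q_p/\Z_p)\cong KH^{(0)}_a(X_1,\Q_p/\Z_p)=0$. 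None of this appears in your proposal, and your closing diagnosis is off: the restriction $d\leq 2$ does not come from the Beilinson--Lichtenbaum range (which is already exceeded for $d=1$) but from the bound $a\leq 4$ in the $p$-primary Kato conjecture over finite fields, which is needed to kill the entire row $E_2^{\bullet,d+2}$ of the spectral sequence.

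Two further points would need justification even granting the above. First, the existence of a commutative ladder between the motivic localization sequence and the triangle-induced sequence presupposes a cycle class map $\CH^{d+1}(X,a,\Z/p^r\Z)\r H^{2d+2-a}_\et(X,\mathcal{T}_r(d+1))$ for $a>0$ compatible with boundary maps; this is not available off the shelf, and the paper manufactures the map as an edge morphism of the niveau spectral sequence precisely to sidestep this. Second, the term in Sato's sequence is the hypercohomology of $X$ with coefficients in $\tau_{\leq d+1}Rj_*\mu_{p^r}^{\otimes d+1}$, which is a truncation of the pushforward along $j:X_K\inj X$; Beilinson--Lichtenbaum concerns the truncation of $R\epsilon_*\mu_{p^r}^{\otimes d+1}$ for the change of topology on $X_K$ itself. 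These are different operations, and your suggestion that the cofibre $\tau_{>d+1}Rj_*\mu_{p^r}^{\otimes d+1}$ "can be absorbed into the ladder" is exactly the hard step, not a correction term. I would recommend reorganizing the argument around the coniveau spectral sequence for the Tate-twist homology theory, where the two problematic rows are visibly the Gersten row (giving $\CH^{d+1}(X,1,\Z/p^r\Z)$ at $E_2^{d,d+1}$) and the Kato row $E_2^{\bullet,d+2}$, whose vanishing is supplied by the localization sequence (\ref{localizationsequencekatohomology}) and the cited theorems of Jannsen--Saito and Kerz--Saito.
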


The following corollary follows from \cite[Prop. 1.4]{Lu17'} and answers a question posed in \textit{loc.~cit.}.
\begin{corollary}
Let the situation be as in Theorem \ref{thm3}. Assume that $A=W(k)$ for a finite field $k$ of characteristic $p>d+2$. Let $X_n$ denote the thickenings of the special fibre $X_1$ and $\mathcal{K}^M_{*,X_n}$ be the improved Milnor K-sheaf on $X_n$ defined in \cite{Ke10}. Then there is an isomorphism of pro-abelian groups
$$\CH^{d+1}(X,1,\Z/p^r\Z)\r "\mathrm{lim}_n" H^{d}(X_1,\mathcal{K}^M_{d+1,X_n}/p^r).$$
\end{corollary}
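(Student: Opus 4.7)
The plan is simply to chain the two isomorphisms already at our disposal. By Theorem \ref{thm3}, specialised to $A=W(k)$ with $k$ finite of characteristic $p$, and since $d=\mathrm{dim}\,X-1\le 2$, we obtain
$$\CH^{d+1}(X,1,\Z/p^r\Z)\xrightarrow{\sim} H^{2d+1}_\et(X,\mathcal{T}_r(d+1)),$$
which we view as an isomorphism of constant pro-objects.

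The second ingredient is \cite[Prop.~1.4]{Lu17'}, which compares $p$-adic \'etale cohomology with values in Sato's Tate twists with the Milnor K-cohomology of the system of infinitesimal thickenings $X_n$ of the special fibre $X_1$. Under the hypothesis $p>d+2$, this yields an isomorphism of pro-abelian groups
$$H^{2d+1}_\et(X,\mathcal{T}_r(d+1))\xrightarrow{\sim}\; "\mathrm{lim}_n"\, H^{d}(X_1,\mathcal{K}^M_{d+1,X_n}/p^r).$$
The bound $p>d+2$ is exactly what makes the improved Milnor K-sheaves $\mathcal{K}^M_{d+1,X_n}$ tractable in the relevant degree, via the Nesterenko--Suslin--Totaro comparison and the corresponding Gersten-type vanishing exploited in \textit{loc.~cit.}

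Composing the two displayed maps gives the desired pro-isomorphism, and all hypotheses of both inputs are simultaneously met by our standing assumptions on $X/W(k)$, $d\le 2$, and $p>d+2$. The only point of care is the passage between a map of abelian groups on one side and a map of pro-abelian groups on the other, which is harmless because the pro-object on the higher Chow side is constant. In this sense there is no genuine obstacle to overcome: the content of the corollary is already packed into Theorem \ref{thm3} together with \cite[Prop.~1.4]{Lu17'}, and it is precisely Theorem \ref{thm3} that supplies the piece which had been missing when the question was raised in \textit{loc.~cit.}, so the question is now answered affirmatively.
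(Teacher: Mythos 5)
Your proposal is correct and coincides with the paper's own (very brief) justification: the corollary is obtained exactly by composing the isomorphism of Theorem \ref{thm3} with the comparison isomorphism of \cite[Prop.~1.4]{Lu17'}, the hypothesis $p>d+2$ being inherited from the latter. Nothing further is needed.
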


In Section \ref{sectionfiniteness} we show a finiteness theorem in low degree for higher Chow groups of schemes over Dedekind domains. The idea of the proof can also be found in \cite[Sec. 7.2]{Ge10}.

\section{Restricted products of Chow groups}
We introduce restricted products of Chow groups over global fields. In the case of $i=d+1$ and $a=1$ these were studied in arithmetic class field theory by Bloch, Kato and Saito.
\begin{definition}\label{definition:restricted_product_of_Chow_groups}
	Let $K$ be a global field, $X/K$ a smooth projective geometrically integral variety, $n$ a positive integer prime to $\mathrm{ch}(K)$ and $i,a \in \N$. Let $\O_K$ be the ring of integers of $K$, and $\mathcal{S} \subset \Spec(\O_K)$ a dense open over which $X$ has a smooth projective model $\mathcal{X}$. Define
	\[
	\resprod_{v \in P_K} \CH^i(X_{K_v},a,\Z/n\Z) \subset \prod_{v \in P_K} \CH^i(X_{K_v}a,\Z/n\Z)
	\] 
	to be the subgroup of classes $(\alpha_v)_{v \in P_K}$ satisfying the condition that
	\[
	\alpha_v \in \mathrm{im}(\CH^i(\mathcal{X}_{\mathcal{O}_{K_v}},a,\Z/n\Z) \to \CH^i(X_{K_v},a,\Z/n\Z))
	\]
	for almost all $v \in \mathcal{S}$.
\end{definition}
\begin{remark}\label{remark:properties_of_restricted_product_of_chow_groups}~
\begin{enumerate}
	\item This definition does not depend on the choice of $\mathcal{S}$ nor on that of $\mathcal{X}$ by standard spreading-out arguments.
	\item For $a = 0$, the restricted product of Chow groups agrees with the usual one, since the restriction map $\CH^i(\mathcal{X}_{\mathcal{O}_{K_v}})\r \CH^i(X_{K_v})$ is surjective. This does not hold for higher Chow groups, which is why we need to introduce the restricted product of Chow groups.
	\item Restricted products of \'etale cohomology groups are defined in an analogous manner. In particular, the \'etale cycle class map restricts to a morphism
	\[
	\resprod_{v\in P_K} \CH^i(X_{K_v},a,\Z/n\Z)\r \resprod_{v\in P_K}\tilde{H}^{2i-a}_{\et}(X_{K_v},\mu_n^{\otimes i}).
	\]
	\item By spreading out cycles, one sees that the pullback map
	\[
		\CH^i(X,a,\Z/n\Z) \to \prod_{v \in P_K} \CH^i(X_{K_v},a,\Z/n\Z)
	\]
	factors over the inclusion of the restricted product.
\end{enumerate}
\end{remark}
In certain degrees, one can detect the restriced product fully on the level of \'etale cohomology:
\begin{lemma}\label{lemma:pullback_square_restricted}
	For $i \geq d+1$, the diagram
	\[
	\begin{tikzcd}
		\resprod_{v\in P_K} \CH^i(X_{K_v},a,\Z/n\Z) \arrow[r]\arrow[d] & \resprod_{v\in P_K}\tilde{H}^{2i-a}_{\et}(X_{K_v},\mu_n^{\otimes i}) \arrow[d] \\
		\sideset{}{}\prod_{v\in P_K} \CH^i(X_{K_v},a,\Z/n\Z) \arrow[r] & \sideset{}{}\prod_{v\in P_K}\tilde{H}^{2i-a}_{\et}(X_{K_v},\mu_n^{\otimes i})
	\end{tikzcd}
	\]
	is a pullback square of abelian groups.
\end{lemma}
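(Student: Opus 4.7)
The plan is to verify the pullback property placewise. Since both vertical maps of the square are inclusions of subgroups of the corresponding full products, the pullback condition reduces to showing that any $(\alpha_v)\in\prod_{v\in P_K}\CH^i(X_{K_v},a,\Z/n\Z)$ whose cycle class lies in $\resprod_{v\in P_K}\tilde{H}^{2i-a}_{\et}(X_{K_v},\mu_n^{\otimes i})$ already lies in $\resprod_{v\in P_K}\CH^i(X_{K_v},a,\Z/n\Z)$; uniqueness in the pullback is then automatic. Fixing a dense open $\mathcal{S}\subset\Spec\O_K$ over which $X$ extends to a smooth projective model $\mathcal{X}$, the hypothesis says that for almost all $v\in\mathcal{S}$ the class $cl(\alpha_v)$ lies in the image of $H^{2i-a}_{\et}(\mathcal{X}_{\O_{K_v}},\mu_n^{\otimes i})$, and we need to deduce that for almost all such $v$ the class $\alpha_v$ itself lies in the image of $\CH^i(\mathcal{X}_{\O_{K_v}},a,\Z/n\Z)$.

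To see this I would compare the localization sequences in higher Chow groups and in \'etale cohomology on $\mathcal{X}_{\O_{K_v}}$. Let $Y_v/k_v$ denote the smooth special fiber. The relevant commutative ladder is
\[
\begin{tikzcd}[column sep=small]
\CH^{i}(\mathcal{X}_{\O_{K_v}},a,\Z/n\Z) \ar[r] \ar[d] & \CH^{i}(X_{K_v},a,\Z/n\Z) \ar[r,"\partial"] \ar[d,"cl"'] & \CH^{i-1}(Y_v,a-1,\Z/n\Z) \ar[d,"cl_{Y_v}"] \\
H^{2i-a}_{\et}(\mathcal{X}_{\O_{K_v}},\mu_n^{\otimes i}) \ar[r] & \tilde{H}^{2i-a}_{\et}(X_{K_v},\mu_n^{\otimes i}) \ar[r,"\partial_{\et}"] & H^{2i-a-1}_{\et}(Y_v,\mu_n^{\otimes i-1})
\end{tikzcd}
\]
with exact rows, where the vertical maps are \'etale cycle class maps and the boundaries come from purity. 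A class in the middle column lifts to the integral model if and only if the appropriate boundary vanishes. By the hypothesis and commutativity of the rightmost square, $cl_{Y_v}(\partial\alpha_v)=\partial_{\et}(cl(\alpha_v))=0$, so the question reduces to injectivity of $cl_{Y_v}$ at almost all $v$.

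The assumption $i\geq d+1$ forces $i-1\geq d=\dim Y_v$, so we are in the top-codimension regime on a smooth projective variety over the finite residue field $k_v$, with $n$ invertible in $k_v$. In this regime the cycle class map is in fact an isomorphism: the Beilinson--Lichtenbaum theorem (Voevodsky--Rost) covers the range $a\geq i$; the range $a<i-d$ is vacuous because both higher Chow groups and \'etale cohomology vanish there for dimension reasons; and the intermediate range is handled by Kato--Saito/Geisser-style analysis of motivic cohomology of smooth projective varieties over finite fields. The main obstacle is controlling $cl_{Y_v}$ in this intermediate range of $a$; once it is in place, the argument concludes via the diagram chase above combined with the standard spreading-out argument of Remark~\ref{remark:properties_of_restricted_product_of_chow_groups}.
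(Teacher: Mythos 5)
Your proposal follows essentially the same route as the paper: reduce to a placewise statement at the almost-all good-reduction places, compare the localization sequences for higher Chow groups and for \'etale cohomology on $\mathcal{X}_{\O_{K_v}}$, and reduce everything to the behaviour of the cycle class map on the smooth special fiber $\mathcal{X}_{k_v}$. The one point where you stop short is precisely the point the paper has to supply: you correctly identify that the argument hinges on injectivity of $cl_{Y_v}\colon \CH^{i-1}(\mathcal{X}_{k_v},a-1,\Z/n\Z)\to H^{2i-a-1}_{\et}(\mathcal{X}_{k_v},\mu_n^{\otimes i-1})$ for all $a$, but you leave the ``intermediate range'' of $a$ as an acknowledged obstacle. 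This is not something Beilinson--Lichtenbaum gives you (that only covers $a-1\geq i-2$, roughly); the paper closes the gap by citing Kerz--Saito \cite[Thm.\ 9.3]{KeS12} (see also \cite[Lemma 6.2]{JS}), which says that for a smooth projective variety over a finite field the cycle class map in top codimension $i-1=d$ is an isomorphism in \emph{all} degrees $a$ --- this is a consequence of the Kato conjecture over finite fields (Theorem \ref{ThmKatoconj03}) combined with the exact sequence of Lemma \ref{lemmauzun}, and it is the essential arithmetic input of the lemma. So: same skeleton, but to make your proof complete you must invoke that theorem (or rerun the Kato-conjecture argument) rather than gesture at a ``Kato--Saito/Geisser-style analysis''; without it the step $\partial\alpha_v=0$ does not follow.
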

\begin{proof}
	Let $S \subset \Spec(\O_K)$ a dense open over which $X$ has a smooth projective model $\mathcal{X}$ and over which $n$ is invertible. For a place $v \in P_K$ lying in $S$, consider the commutative diagram of localization sequences
	\[
	\begin{tikzcd}
	\CH^i(\mathcal{X}_{\mathcal{O}_{K_v}},a,\Z/n\Z) \arrow[r]\arrow[d]& \CH^i(X_{K_v},a,\Z/n\Z) \arrow[r]\arrow[d]& \CH^{i-1}(\mathcal{X}_{k_v},a-1,\Z/n\Z)\arrow[d] \\
	{H}^{2i-a}_{\et}(\mathcal{X}_{\mathcal{O}_{K_v}},\mu_n^{\otimes i})\arrow[r]& {H}^{2i-a}_{\et}(X_{K_v},\mu_n^{\otimes i}) \arrow[r]& {H}^{2i-a-1}_{\et}(\mathcal{X}_{k_v},\mu_n^{\otimes i-1})
	\end{tikzcd}
	\]
	and note that the rightmost vertical arrow is an isomorphism by \cite[Theorem 9.3]{KeS12} for $i = d+1$ (see also \cite[Lemma 6.2]{JS}). This means that the natural map
	\[
		\CH^i(\mathcal{X}_{\mathcal{O}_{K_v}},a,\Z/n\Z) \to \CH^i(X_{K_v},a,\Z/n\Z) \times_{{H}^{2i-a}_{\et}(X_{K_v},\mu_n^{\otimes i})} {H}^{2i-a}_{\et}(\mathcal{X}_{\mathcal{O}_{K_v}},\mu_n^{\otimes i})
	\]
	is a surjection. Together with the fact that the inclusions maps from the restricted products are injective, this is enough to establish the claim.
\end{proof}

\section{The Tate-Poitou exact sequence}\label{sectiontatepoitou}
We recall results of Tate and Poitou (see \cite[Thm. 3.1]{Ta63}). These will be generalised in many directions in higher dimension by the Kato conjectures, higher dimensional class field theory and local to global principles for (higher) Chow groups. 

We introduce the following notation: let $K$ be a global field. Let $S$ be a non-empty and possibly infinite set of prime divisors of $K$ containing the archemedian primes if $K$ is a number field. Let $K_S$ be the ring of elements in $K$ which are integers at all primes $\mathfrak{p}\notin S$. Let $K_S$ denote the maximal extension of $K$ in $K^s$ that is ramified over $K$ only at primes in $S$. Let $G_S:=\mathrm{Gal}(K_S/K)$. Let $M$ be a finite $G_S$-module of order $n$ which is invertible on $K_S$. Let $D(M):=\mathrm{Hom}(M,\mathbb{G}_m)$. For an abelian group $A$ let $A^\vee:=\mathrm{Hom}(A,\Q/\Z)$.
\begin{theorem}\label{TatePoitou}
\begin{enumerate}
\item For $i\geq 3$ the natural map
$$H^i(K_S,M)\r \prod_{v\in S}H^i(K_v,M)$$ 
is an isomorphism.
\item There is an exact nine-term sequence
\begin{align*}
0\r  H^0(K_S,M)\r &\prod'_{v\in S}H^0(K_v,M)\r  H^2(K_S,D(M))^\vee  \r \\
 H^1(K_S,M)\xrightarrow{\alpha_1}  & \prod'_{v\in S}H^1(K_v,M)\xrightarrow{\beta_1}   H^1(K_S,D(M))^\vee  \r \\
 H^2(K_S,M)\xrightarrow{\alpha_2} & \prod'_{v\in S}H^2(K_v,M)\xrightarrow{\beta_2}   H^0(K_S,D(M))^\vee  \r 0 
\end{align*}
\end{enumerate}
\end{theorem}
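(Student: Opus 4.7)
The plan is to derive the theorem from local Tate duality together with Artin--Verdier duality over $\mathcal{S}=\Spec(\O_S)$. The starting input is local Tate duality: for each place $v$ and finite $G_{K_v}$-module $M$ with $|M|$ invertible in $k(v)$, the cup product
\[
H^i(K_v,M) \times H^{2-i}(K_v,D(M)) \to H^2(K_v,\mathbb{G}_m) \cong \Q/\Z
\]
is a perfect pairing of finite groups, where at archimedean places one uses Tate-modified cohomology. This is the local input both for the maps $\beta_i$ (adjoints of the pairing) and for the finiteness of all terms.

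For part (1), I would invoke that $G_S$ has strict cohomological dimension two at every prime $\ell$ invertible on $\O_S$, apart from contributions of real places, together with the vanishing $H^i(K_v,M)=0$ for $i\geq 3$ at non-archimedean $v$ and the periodicity of Tate cohomology at real $v$. Comparing \'etale cohomology of $\mathcal{S}$ with its compactly supported cohomology then forces the high-degree comparison
$H^i(G_S,M)\xrightarrow{\sim}\prod_{v\in S}H^i(K_v,M)$ for $i\geq 3$, with all non-real factors vanishing.

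Part (2) is obtained by splicing two exact sequences. The open--closed decomposition of $\mathcal{S}$ provides for the sheaf corresponding to $M$ a long exact localization sequence
\[
\cdots \to H^i_c(\mathcal{S},M) \to H^i(G_S,M) \to \resprod_{v\in S} H^i(K_v,M) \to H^{i+1}_c(\mathcal{S},M) \to \cdots,
\]
where the restricted product appears because the image of each global class is unramified at almost all $v$. Artin--Verdier duality then supplies perfect pairings
\[
H^i_c(\mathcal{S},M) \times H^{3-i}(\mathcal{S},D(M)) \to H^3_c(\mathcal{S},\mathbb{G}_m) \cong \Q/\Z,
\]
giving identifications $H^i_c(\mathcal{S},M) \cong H^{3-i}(G_S,D(M))^\vee$ (using that $D(M)$ is unramified on $\mathcal{S}$, so its $\mathcal{S}$-cohomology is $G_S$-cohomology). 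Substituting these identifications into the localization sequence and truncating to the range $0\leq i\leq 2$ produces the nine-term sequence, with $\alpha_i$ the restriction maps and $\beta_i$ the connecting maps, which by construction agree with the dual of the cup-product pairing.

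The main obstacle is Artin--Verdier duality itself, which is essentially equivalent to global class field theory. Its proof proceeds by d\'evissage to the coefficient sheaf $\mu_n$, where the duality pairing becomes the Brauer--Hasse--Noether reciprocity together with the description of $\Br(K)$ as the kernel of the sum-of-invariants map $\bigoplus_v \Br(K_v)\to\Q/\Z$. A second delicate point is the bookkeeping at archimedean places: the modified cohomology, the restricted product convention, and the global invariant map must be set up so that $\beta_i$ factors through $\Q/\Z$ by summing local invariants, and so that this sum vanishes on the image of $\alpha_i$ — precisely the reciprocity law.
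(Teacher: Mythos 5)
The paper does not prove this statement at all: Theorem \ref{TatePoitou} is recalled as a classical result of Tate and Poitou with a citation to \cite{Ta63}, so there is no in-text argument to compare against. Your proposal is a correct outline of the standard modern proof (the one in Milne's \emph{Arithmetic Duality Theorems} and in Mazur's notes on \'etale cohomology of number fields), which derives the nine-term sequence from the localization sequence relating $H^i_c(\mathcal{S},-)$ and $H^i(\mathcal{S},-)$ together with Artin--Verdier duality, rather than from Tate's original class-formation/spectral-sequence construction. Both routes are legitimate; the \'etale route has the advantage of matching the framework the paper actually uses later (Saito's and Geisser--Schmidt's generalization (\ref{SaitoTatePoitou}) is proved exactly this way), while Tate's original approach avoids Artin--Verdier duality at the cost of a more delicate direct construction of the pairings.

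Three points in your sketch deserve flagging as the places where real work is hidden. First, the restricted product over an infinite $S$ does not drop out of a single localization sequence: one must prove the statement for finite $S'$ (where the sum over $v\in S'$ is literal) and then pass to the colimit over finite $S'\subset S$, checking that the colimit of the finite sums is the restricted product with respect to the unramified subgroups $H^i(\O_{K_v},M)$; your one-line justification gestures at this but does not carry it out. Second, the identification $H^i_c(\mathcal{S},M)\cong H^{3-i}(\mathcal{S},D(M))^\vee$ and the comparison $H^i(\mathcal{S},\mathcal{F})\cong H^i(G_S,M)$ both require the archimedean modifications (Tate cohomology at real places, hence the hypothesis that $n$ is odd or $K$ has no real places in the applications), and the compatibility of the resulting connecting maps with the duals of the local cup-product pairings is a nontrivial diagram check, not automatic from the construction. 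Third, as you note, Artin--Verdier duality itself is the substantive input; reducing it to the Brauer--Hasse--Noether sequence for $\mu_n$ is the correct strategy but is where essentially all of global class field theory enters. None of these is a gap in the sense of a wrong step, but a complete write-up would need to supply them.
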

Here $\prod'_{v\in S}$ denotes the restricted product with respect to the subgroups $H^i(\O_{K_v},M)$. At the archemedian places we assume the cohomology groups to be the completed cohomology groups (see \cite[Ch. VIII]{Se95}). Note that the restricted product in the first line becomes a direct product since $H^0(\O_{K_v},M)\r H^0(K_v,M)$ is surjective and that the restricted product in the third line becomes a direct sum since $H^2(\O_{K_v},M)=0$. 

For $M=\mu_n$ this sequence encodes fundamental theorems in algebraic number theory. Let $S=P_K$.
Since $H^1(K,\mu_n)\cong K^\times/n$ and $H^1(K,\Z/n\Z)\cong \mathrm{Gal}(K^{\mathrm{ab}}/K)/n$ one recovers the class field theory isomorphism
$$C_K/n:=\mathrm{coker}[K^\times/n\r \prod'_{v\in P}K_v^\times/n]\xrightarrow{\cong}\mathrm{Gal}(K^{\mathrm{ab}}/K)/n$$
from the second line since $\beta_1$ is surjective by the density of the Frobenii. Since $H^2(K,\mu_n)\cong \Br(K)[n]$, the third line recovers, after passing to the direct limit, the Brauer-Hasse-Noether exact sequence
$$0\r \Br(K)\r\bigoplus_{v\in P_K} \Br(k_v)\r \Q/\Z\r 0.$$

Noting that $\CH^a(K,a)\cong K^M_a(K)$ (see \cite[Thm. 6.1]{Bl86}, \cite[Thm. 4.9]{NS89} and \cite{To92}) and applying $\mathrm{Hom}(\_,\Q/\Z)$ to the second sequence, these may be interpreted as results about (higher) Chow groups. The second line then becomes 
$$C_K/n:=\mathrm{coker}[\CH^1(K,1,\Z/n\Z)\r \prod'_{v\in P}\CH^1(K_v,1,\Z/n\Z)]\xrightarrow{\cong}\mathrm{Gal}(K^{\mathrm{ab}}/K)/n$$
and the third line becomes
\begin{equation}\label{conj1forK}
\varprojlim_n\CH_0(K)/n\r \varprojlim_n\prod'_{v\in P}\CH_0(K_v)/n\r \mathrm{Hom}(\Br(K),\Q/\Z)
\end{equation}
which asserts Conjecture \ref{conj1} for $\Spec(K)$ (see also \cite[Rem. 1.1]{Wi12}). For the latter statement one may also consider the Tate-Poitou exact sequence for $M=\Z/n\Z$. In this case the first line becomes
$$\begin{xy} 
  \xymatrix{
	0\ar[r]&\Z/n\Z \ar[r]\ar[d]^{\cong}& \prod'_{v\in P_K} \Z/n\Z \ar[r]\ar[d]^{\cong}&  H^2(K,\mu_n)^\vee =(\Br(X)[n])^\vee \ar[d]^{=} \\
 0\ar[r]&	H^0(K,\Z/n) \ar[r]& \prod'_{v\in P_K}H^0(K_v,\Z/n\Z)\ar[r] & H^2(K,\mu_n)^\vee}
	\end{xy}
$$	
and taking the projective limit over all $n$ gives (\ref{conj1forK}).

\section{The Kato conjectures}
We introduce the following notation for Kato complexes:
\begin{definition}\label{definitionkatocomplexes}
\begin{enumerate}
\item For $X$ a scheme over a finite field or the ring of integers in a number field or local field, we denote the complexes
\begin{multline*}
...\r \bigoplus_{x\in X_a}H^{a+1}(k(x),\Z/n(a))\r ...\r 
\bigoplus_{x\in X_{a-1}}H^{a}(k(x),\Z/n(a-1))\r...\\ ...\r 
\bigoplus_{x\in X_1}H^{2}(k(x),\Z/n(1)) \r \bigoplus_{x\in X_0}H^{1}(k(x),\Z/n)
\end{multline*}
by $KC^{(0)}(X,\Z/n\Z)$.
Here the term $\oplus_{x\in X_a}H^{a+1}(k(x),\Z/n(a))$ is placed in degree $a$. We set
$$KH_a^{(0)}(X,\Z/n\Z):= H_a(KC^{(0)}(X,\Z/n\Z)).$$  
The groups $H^{a+1}(k(x),\Z/n(a))$ are the \'etale cohomology groups of $\Spec k(x)$ with coefficients in $\Z/n(a):=\mu^{\otimes a}_n$ if $n$ is invertible on $X$ and $\Z/n(a):=W_r\Omega^{a}_{X_1,\mathrm{log}}[-(a)]\oplus\Z/m(a)$ if $n=mp^r, (m,p)=1,$ is not invertible on $X$ and $X$ is smooth over a field of characteristic $p$.

\item For $X$ a scheme of finite type over a number field $K$ or $K_v, v\in P_K,$ we denote the complexes
\begin{multline*}
...\r \bigoplus_{x\in X_a}H^{a+2}(k(x),\Z/n(a+1))\r ...\r 
\bigoplus_{x\in X_{a-1}}H^{a+1}(k(x),\Z/n(a))\r...\\ ...\r 
\bigoplus_{x\in X_1}H^{3}(k(x),\Z/n(2)) \r \bigoplus_{x\in X_0}H^{2}(k(x),\Z/n(1))
\end{multline*}
by $KC^{(1)}(X,\Z/n\Z)$ and set
$$KH_a^{(1)}(X, \Z/n\Z):= H_a(KC^{(1)}(X,\Z/n\Z))$$.
\item Let $K$ be a global field with ring of integers $\O_K$. Let $U\subset \Spec{O_K}$ be a non-empty open subscheme and $X$ of finite type over $U$. Then there is a natural restriction map
$$KC^{(0)}(X,\Z/n\Z)[1]\r KC^{(1)}(X_K,\Z/n\Z)\r KC^{(1)}(X_{K_v},\Z/n\Z).$$
We define
$$KC^{(0)}(X/U,\Z/n\Z):=\mathrm{cone}[KC^{(0)}(X,\Z/n\Z)[1]\r \bigoplus_{v\in \sum_U} KC^{(1)}(X_{K_v},\Z/n\Z)],$$
where $\sum_U$ denotes the set of places $v\in P_K$ which do not correspond to closed points of $U$. We set  
$$KH^{(0)}_a(X/U,\Z/n\Z):= H_a(KC^{(0)}(X/U,\Z/n\Z)).$$  
\end{enumerate}
\end{definition} 

\begin{remark}
In \cite{Ka86}, Kato constructs the above complexes in greater generality. Let $X$ be an excellent scheme, $n\in \Z-\{0\},q,i\in \Z$ and assume that in the case $q=i+1$, for any prime divisor $p$ of $n$ and for any $x\in X_{0}$ such that char$(k(x))=p$, we have $[k(x):k(x)^p]\geq p^i$. Then there are complexes
\begin{multline*}
C^i_n(X): ...\r \bigoplus_{x\in X_a}H^{i+a+1}(k(x),\Z/n(i+a))\r ...\r 
\bigoplus_{x\in X_1}H^{i+2}(k(x),\Z/n(i+1))\\ \r \bigoplus_{x\in X_0}H^{i+1}(k(x),\Z/n(i))
\end{multline*}
Again the term $\oplus_{x\in X_a}H^{i+a+1}(k(x),\Z/n(i+a))$ is placed in degree $a$ and the homology of $C^i_n(X)$ in degree $a$ is denoted by $KH^{(i)}_a(X,\Z/n\Z)$. 

It is shown in \cite{JSS}, that these complexes coincide up to sign with the complexes arising from the appropriate homology theories via the niveau spectral sequence.
\end{remark}

Note that if $X$ is of finite type over the ring of integers $\O_{K_v}$ in a local field $K_v$, then by definition there is an exact triangle $$KC^{(0)}(X,\Z/n\Z)[1]\r KC^{(1)}(X_{K_v},\Z/n\Z)\xrightarrow{\partial} KC^{(0)}(X_v,\Z/n\Z)\r KC^{(0)}(X,\Z/n\Z)$$
which induces an exact sequence of homology groups
\begin{equation}\label{localizationsequencekatohomology}
..\r KH^{(0)}_{a+1}(X,\Z/n\Z)\r KH^{(1)}_{a}(X_{K_v},\Z/n\Z)\r KH^{(0)}_{a}(X_{v},\Z/n\Z)\r KH^{(0)}_{a}(X,\Z/n\Z)\r..
\end{equation}
Let us state Kato's conjectures for the above complexes.
\begin{conj}(\cite[Conj. 0.3]{Ka86})\label{Conj0.3}
Let $X$ be a proper and smooth scheme over a finite field. Then
$$KH_a^{(0)}(X,\Z/n\Z)=0 \quad \mathrm{for} \quad a>0.$$
\end{conj}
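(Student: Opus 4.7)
The plan is to proceed by induction on the dimension $d$ of $X$, after first separating the prime-to-$p$ part and the $p$-primary part (where $p = \mathrm{char}(\mathbb{F}_q)$). The base case $d=0$ is immediate: $X$ is a finite disjoint union of spectra of finite fields, so the complex $KC^{(0)}(X,\Z/n\Z)$ is concentrated in degree $0$ and there is nothing to prove for $a > 0$.

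For the inductive step with $n$ invertible on $X$, I would first use the niveau spectral sequence to identify $KH^{(0)}_a(X,\Z/n\Z)$ with an étale cohomology group; by the Bloch-Kato/Beilinson-Lichtenbaum theorem of Voevodsky-Rost, the coniveau filtration on $H^*_{\et}(X,\mu_n^{\otimes \bullet})$ places the Kato complex as the $E_1$-page of a spectral sequence converging to étale cohomology, and in the relevant range the complex computes these groups up to sign (cf.\ the discussion of \cite{JSS} in the remark above). Next I would apply a Bertini theorem over finite fields, in the refined form of Poonen, to find a smooth ample divisor $Y \hookrightarrow X$ with affine complement $U = X \setminus Y$. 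Pulling back along $Y \hookrightarrow X$ together with the localization sequence for $U$ produces a long exact sequence
\[
\cdots \r KH^{(0)}_{a}(Y,\Z/n\Z) \r KH^{(0)}_{a}(X,\Z/n\Z) \r KH^{(0)}_{a}(U,\Z/n\Z) \r KH^{(0)}_{a-1}(Y,\Z/n\Z) \r \cdots
\]
By the inductive hypothesis the outer terms vanish for $a \geq 2$, reducing the problem to the vanishing of the open contribution $KH^{(0)}_a(U,\Z/n\Z)$, and separately the case $a = 1$.

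For the open piece, I would use Poincaré duality on the smooth projective ambient $X$ and Artin's affine Lefschetz vanishing theorem for the affine scheme $U$ over $\mathbb{F}_q$, combined with the fact that Frobenius acts with positive weights on the relevant cohomology (Deligne's purity), to force the contributions to vanish in cohomological degrees strictly above $\dim U$. The case $a = 1$ is more delicate and is precisely where the classical results of Kato (for surfaces) and Colliot-Thélène--Sansuc--Soulé are used; one must track the residue maps into the codimension-$1$ points and identify the obstruction with a global reciprocity obstruction that vanishes by the unramified class field theory of Kato-Saito.

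The hard part will be the $p$-primary component, i.e.\ when $p \mid n$ and $p = \mathrm{char}(\mathbb{F}_q)$, where the coefficient sheaf $\Z/p^r(a) = W_r\Omega^{a}_{X,\mathrm{log}}[-a]$ is no longer a locally constant étale sheaf. Here I would follow the strategy of Kerz-Saito: replace the Bloch-Kato identification by Bloch-Gabber-Kato's theorem expressing logarithmic de Rham-Witt sheaves in terms of Milnor K-theory modulo $p^r$, and apply coherent duality for $W_r\Omega^{\bullet}_{X,\mathrm{log}}$ together with the vanishing of log de Rham-Witt cohomology of affine schemes in top degree. Again a careful Bertini argument (in the refined form needed to preserve log-smoothness and the existence of good divisors) is required; this is the technical heart of the Kerz-Saito/Jannsen-Saito proof, and completing the induction here will be the main obstacle.
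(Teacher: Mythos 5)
The first thing to observe is that the paper does not prove this statement: it is recorded as a conjecture of Kato, and what the paper actually uses later is Theorem \ref{ThmKatoconj03}, which imports the known cases from \cite[Thm. 8.1]{KeS12} --- namely that the conjecture holds when $n$ is invertible on $X$, and that the $p$-primary part is known only in degrees $a\leq 4$. So there is no internal proof to match your argument against, and, more importantly, the statement you set out to prove in full generality is still open in the $p$-primary part for $a>4$. Your final paragraph correctly identifies this as ``the main obstacle,'' but a proposal that ends by naming the open part as the remaining obstacle is not a proof; at best it is an outline of the known cases, and the claim should be scoped accordingly.

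Even restricted to the known cases, the sketch has a real gap at its centre. After the Bertini step you must show $KH^{(0)}_a(U,\Z/n\Z)=0$ for $U$ affine and smooth, but $U$ is not proper, so the inductive hypothesis says nothing about it; the vanishing of Kato homology of affine schemes in the relevant range is precisely the Lefschetz-type theorem that constitutes the content of \cite{KeS12} (and of Jannsen's weight argument), and it does not follow from Artin vanishing plus Poincar\'e duality alone. The decisive inputs --- Deligne's Weil II to control weights on the cohomology of the strata appearing in the niveau spectral sequence, and de Jong/Gabber alterations to reduce to a situation where those weights can be read off --- are absent from your outline. For the $p$-part with $a\leq 4$ one additionally needs the duality theory for logarithmic de Rham--Witt sheaves and Gabber's refined Bertini theorem; ``coherent duality for $W_r\Omega^{\bullet}_{\mathrm{log}}$'' is too coarse a description to carry that argument. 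In short, the architecture (induction on dimension, Bertini over finite fields, localization, reduction of $a=1$ to unramified class field theory) matches the literature the paper cites, but the proposal neither closes the open part of the conjecture nor supplies the theorems that make the known part work.
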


\begin{conj}(\cite[Conj. 5.1]{Ka86})\label{Conj5.1}
Let $X$ be a regular scheme proper and flat over $\Spec(\O_K)$, where $\O_K$ is the ring of integers in a local field. Then
$$KH_a^{(0)}(X,\Z/n\Z)=0 \quad \mathrm{for} \quad a\geq 0.$$
\end{conj}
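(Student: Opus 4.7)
The plan is to deduce Conjecture~\ref{Conj5.1} from Conjecture~\ref{Conj0.3} together with an analogous vanishing over the local field $K_v$, using the localization triangle~(\ref{localizationsequencekatohomology}). Writing $n = m p^r$ with $p$ the residue characteristic of $\O_K$ and $(m,p)=1$, the complexes $KC^{(\bullet)}(-,\Z/n\Z)$ split into their prime-to-$p$ and $p$-primary parts, so the two cases can be treated separately.

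For the prime-to-$p$ part, I would proceed by induction on $d = \dim X$, following the Jannsen-Kerz-Saito strategy. The key observation is that when both fibers of $X/\O_K$ are smooth, the localization sequence~(\ref{localizationsequencekatohomology}) directly reduces the vanishing of $KH^{(0)}_a(X,\Z/m\Z)$ to Conjecture~\ref{Conj0.3} applied to $X_v$, together with an analogous vanishing $KH^{(1)}_a(X_{K_v},\Z/m\Z) = 0$ on the generic fiber, which one proves in parallel by an induction of the same shape over the local field. To handle the general case, where $X_v$ is typically singular even though $X$ is regular, I would invoke Gabber's refinement of de Jong's alterations of degree prime to $\ell$ for each $\ell \mid m$, producing $\pi\colon \widetilde{X} \to X$ with $\widetilde{X}$ having strictly semistable reduction, and then descend vanishing from $\widetilde{X}$ to $X$ via a norm/trace argument on Kato complexes. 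The base case $d=0$ reduces to the classical Tate-Poitou sequence of Section~\ref{sectiontatepoitou}; throughout the argument, the Bloch-Kato norm residue theorem (Voevodsky-Rost) is essential to identify the $E_1$-terms of the niveau spectral sequence with Milnor $K$-theory and thereby compare the Kato complexes of $X$ and $\widetilde{X}$.

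For the $p$-primary part, the same inductive scheme is carried out, but with $\mu_{p^r}^{\otimes j}$ replaced by the logarithmic de Rham-Witt sheaves $W_r\Omega^j_{X_v,\mathrm{log}}[-j]$ on the special fiber and by $p$-adic vanishing cycles on the generic fiber, using techniques of Bloch-Kato, Hyodo, Jannsen-Saito, and Moser. The main obstacle in both cases is the inductive step: alterations do not in general preserve regularity or flatness over $\O_K$, so one must rely on Gabber's refined alterations together with a careful trace/corestriction computation on the Kato complex to propagate vanishing back to $X$ itself. In the $p$-adic case this is compounded by the failure of absolute purity for $\Z/p^r$-coefficients over a mixed characteristic base, which forces one to replace purity by a delicate analysis of $p$-adic nearby cycles and is why the $p$-part historically required substantially more refined techniques than the prime-to-$p$ part.
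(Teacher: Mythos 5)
You are attempting to prove something that the paper deliberately leaves as a conjecture. Conjecture~\ref{Conj5.1} is not proved anywhere in the text; the authors only record, in Theorem~\ref{ThmKatoconj51}, that the case where $n$ is invertible on $X$ (equivalently, prime to the residue characteristic $p$) is a theorem of Kerz and Saito \cite[Thm.~8.1]{KeS12}. Your prime-to-$p$ discussion is essentially a pr\'ecis of the Jannsen--Kerz--Saito program underlying that citation --- induction on dimension, Gabber's prime-to-$\ell$ alterations, a trace argument on Kato complexes, Bloch--Kato --- but it is a summary of a long published proof rather than an argument. Even the step you call ``direct'' in the good-reduction case is not: Conjecture~\ref{Conj0.3} gives vanishing of $KH^{(0)}_a(X_v,\Z/m\Z)$ only for $a>0$ (the degree-zero group is $\Z/m\Z$), the localization sequence~(\ref{localizationsequencekatohomology}) forces you to control $KH^{(1)}_a(X_{K_v},\Z/m\Z)$ and the boundary map simultaneously, and the actual proof in \cite{KeS12} runs through a Lefschetz/Bertini argument and the affine case rather than the d\'evissage you sketch.

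The genuine gap is the $p$-primary part, which in mixed characteristic is open, and the paper is explicit about why your proposed strategy fails there: Remark~\ref{remarkkatoconj51} points out that the homology theory built from $p$-adic \'etale Tate twists cannot be pushed through the Lefschetz machinery of \cite{KeS12} because the requisite base change and Artin vanishing are not available for these coefficients --- this is precisely why the authors substitute the class-field-theoretic finiteness statement of Lemma~\ref{lemmafiniteness}(2) for the conjectural vanishing in their applications. Appealing to ``techniques of Bloch--Kato, Hyodo, Jannsen--Saito and Moser'' does not close this gap: those tools treat the logarithmic de~Rham--Witt theory of the special fiber and the $p$-adic vanishing cycles on the generic fiber separately, and no argument is known that splices them into the vanishing of $KH^{(0)}_a(X,\Z/p^r\Z)$ for a regular proper flat $X$ over $\O_K$. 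The correct disposition of this statement is the one the paper takes: cite \cite{KeS12} for $n$ invertible on $X$, and leave the $p$-part as a conjecture.
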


\begin{conj}\label{Conj04}(\cite[Conj. 0.4]{Ka86})
Let $X$ be a proper and smooth scheme over a global field $K$. Then the map
$$KH_a^{(1)}(X,\Z/n\Z)\xrightarrow{\cong} \bigoplus_{v\in P_K} KH_a^{(1)}(X_{K_v},\Z/n\Z)$$
is an isomorphism for $a>0$ and the sequence
$$0\r KH_0^{(1)}(X,\Z/n\Z)\r \bigoplus_{v\in P_K} KH_0^{(1)}(X_{K_v},\Z/n\Z) \r \Z/n\Z\r 0$$
is exact.
\end{conj}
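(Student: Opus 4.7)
The approach is to bootstrap the (now-established) Conjectures \ref{Conj0.3} (Jannsen-Kerz-Saito, over finite fields) and \ref{Conj5.1} (Kerz-Saito, over henselian local rings) to the global local-to-global statement \ref{Conj04}. Fix a dense open $U \subset \Spec(\O_K)$ with $n$ invertible on $U$ and over which $X$ admits a regular proper flat model $\mathcal{X}$; write $\Sigma_U := P_K \setminus U$ for the finite set of bad places (including archimedean places when $K$ is a number field), let $U_{\mathrm{cl}}$ denote the closed points of $U$, and let $\mathcal{X}_v$ be the special fiber of $\mathcal{X}$ at a $v \in U_{\mathrm{cl}}$.

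First I would handle the local Kato homology at the good places: the localization sequence~(\ref{localizationsequencekatohomology}) applied to $\mathcal{X}_{\O_{K_v}}$, together with Conjecture \ref{Conj5.1}, yields an isomorphism $KH^{(1)}_a(X_{K_v}, \Z/n\Z) \xrightarrow{\cong} KH^{(0)}_a(\mathcal{X}_v, \Z/n\Z)$ for every $v \in U_{\mathrm{cl}}$. Conjecture \ref{Conj0.3} applied to the smooth proper variety $\mathcal{X}_v / k(v)$ then gives vanishing for $a > 0$ and the value $\Z/n\Z$ (one copy per connected component) in degree zero. This collapses the right-hand side of \ref{Conj04} to the finite sum over $\Sigma_U$ when $a > 0$ and makes the good-place contributions explicit when $a = 0$.

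Next I would assemble the global comparison. The codimension filtration on $\mathcal{X}$, separating horizontal from vertical points, gives a short exact sequence of complexes, hence a distinguished triangle
\[
\bigoplus_{v \in U_{\mathrm{cl}}} KC^{(0)}(\mathcal{X}_v, \Z/n\Z) \to KC^{(0)}(\mathcal{X}, \Z/n\Z) \to KC^{(1)}(X, \Z/n\Z)[1].
\]
Combining this with the cone triangle from Definition~\ref{definitionkatocomplexes}(3), namely
\[
KC^{(0)}(\mathcal{X}, \Z/n\Z)[1] \to \bigoplus_{v \in \Sigma_U} KC^{(1)}(X_{K_v}, \Z/n\Z) \to KC^{(0)}(\mathcal{X}/U, \Z/n\Z),
\]
via the octahedral axiom produces a triangle directly comparing $KC^{(1)}(X, \Z/n\Z)$ with $\bigoplus_{v \in P_K} KC^{(1)}(X_{K_v}, \Z/n\Z)$, where the third term is built from $\bigoplus_{v \in U_{\mathrm{cl}}} KC^{(0)}(\mathcal{X}_v, \Z/n\Z)$ and $KC^{(0)}(\mathcal{X}/U, \Z/n\Z)$. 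Its homology is governed by Conjectures \ref{Conj0.3} and \ref{Conj5.1}: in positive degrees it vanishes, yielding the claimed isomorphism for $a > 0$.

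The degree-$0$ tail of the associated long exact sequence then collapses to
\[
0 \to KH^{(1)}_0(X, \Z/n\Z) \to \bigoplus_{v \in P_K} KH^{(1)}_0(X_{K_v}, \Z/n\Z) \to \Z/n\Z \to 0,
\]
where one identifies the final cokernel as $\Z/n\Z$ by matching the fundamental classes of the special fibers $\mathcal{X}_v$ with the global reciprocity map, in complete analogy with the Brauer-Hasse-Noether tail recovered in \S\ref{sectiontatepoitou} and Theorem~\ref{TatePoitou}. The main obstacle is the deep input from Conjecture~\ref{Conj0.3}, whose proof by Jannsen-Kerz-Saito rests on de Jong alterations and a delicate induction on dimension; given this as a black box, the remainder is essentially a diagram chase with triangles of Kato complexes, where the most delicate verification is that the boundary maps correctly align the local fundamental classes with the global $\Z/n\Z$-quotient in the degree-$0$ sequence.
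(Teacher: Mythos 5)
First, note that the paper itself offers no proof of this statement: it is Kato's Conjecture 0.4, recorded verbatim, and its validity for $n$ invertible is imported as Theorem \ref{ThmKatoconj04} with a citation to Jannsen and Kerz--Saito. So what you are proposing is in effect a new derivation of that theorem from Theorems \ref{ThmKatoconj03} and \ref{ThmKatoconj51} alone. The d\'evissage is sound as far as it goes: Conjecture \ref{Conj5.1} together with the localization sequence (\ref{localizationsequencekatohomology}) does give $KH^{(1)}_a(X_{K_v},\Z/n\Z)\cong KH^{(0)}_a(\mathcal{X}_v,\Z/n\Z)$ at the good places, and your two triangles do combine, via the octahedron, into a triangle whose third term is built from $\bigoplus_{v\in U_{\mathrm{cl}}}KC^{(0)}(\mathcal{X}_v,\Z/n\Z)$ and $KC^{(0)}(\mathcal{X}/U,\Z/n\Z)$. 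The gap is the sentence ``its homology is governed by Conjectures \ref{Conj0.3} and \ref{Conj5.1}.'' Conjecture \ref{Conj0.3} controls only the closed fibers $\mathcal{X}_v$; the acyclicity of the second building block, i.e.\ $KH^{(0)}_a(\mathcal{X}/U,\Z/n\Z)=0$, is precisely Conjecture \ref{Conj0.5} (Theorem \ref{ThmKatoconj05}), which you never invoke and which does not follow formally from Conjectures \ref{Conj0.3} and \ref{Conj5.1}.

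That this missing input is genuinely global, and of the same depth as the statement you are trying to prove, is visible already in the smallest case $\mathcal{X}=U=\Spec\O_K$, $d=0$: there the vanishing of $KH^{(0)}_1(\mathcal{X}/U,\Z/n\Z)$ amounts to the injectivity of $\Br(K)[n]\to\bigoplus_v\Br(K_v)[n]$, i.e.\ the Brauer--Hasse--Noether theorem, which no amount of information about finite fields and henselian discrete valuation rings will produce; in general the proof of Theorem \ref{ThmKatoconj05} needs the global Hasse principles (Kato's conjecture for curves over global fields, ultimately class field theory) on top of the finite-field case. What your argument actually establishes --- correctly --- is that, granting Conjecture \ref{Conj5.1} at the good places, Conjecture \ref{Conj04} is equivalent to Conjecture \ref{Conj0.5}; this is the same mechanism by which the paper deduces Theorem \ref{ThmKatoconj06} from Theorems \ref{ThmKatoconj51} and \ref{ThmKatoconj05}. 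But it relocates, rather than removes, the essential difficulty, so as written the proposal assumes a statement equivalent to the one to be proven. Two smaller points: Conjecture \ref{Conj0.3} as stated gives nothing in degree $0$, so the identification $KH^{(0)}_0(\mathcal{X}_v,\Z/n\Z)\cong\Z/n\Z$ requires unramified class field theory over finite fields as a further input; and for $n$ even the real places contribute nontrivially to $\bigoplus_{v}KC^{(1)}(X_{K_v},\Z/n\Z)$ and are not covered by Conjecture \ref{Conj5.1}, which is another reason the relative complex $KC^{(0)}(\mathcal{X}/U,\Z/n\Z)$ cannot be dispensed with.
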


\begin{conj}(\cite[Conj. 0.5]{Ka86})\label{Conj0.5}
Let $\mathcal{X}$ be a regular scheme proper and flat with smooth generic fiber over a non-empty open subscheme $U\subset\Spec(\O_K)$, where $\O_K$ is the ring of integers in a global field. Then
$$KH_a^{(0)}(\mathcal{X}/U,\Z/n\Z)=0 \quad \mathrm{for} \quad a\geq 0.$$
\end{conj}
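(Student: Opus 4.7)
The plan is to reduce Conjecture~\ref{Conj0.5} to Conjectures~\ref{Conj0.3}, \ref{Conj04}, and~\ref{Conj5.1} by comparing the defining complex $KC^{(0)}(\mathcal{X}/U,\Z/n\Z)$ with the global Hasse complex of the generic fiber $X_K$. First, stratify $\mathcal{X}$ over $U$: every point of $\mathcal{X}$ lies either in $X_K$ or in some special fiber $X_v$ with $v \in U_0$. The niveau filtration with respect to this stratification yields a short exact sequence of Kato complexes
\[
0 \r \bigoplus_{v \in U_0} KC^{(0)}(X_v,\Z/n\Z) \r KC^{(0)}(\mathcal{X},\Z/n\Z) \r KC^{(1)}(X_K,\Z/n\Z)[-1] \r 0
\]
whose connecting map at each $v$ is the tame residue. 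By Conjecture~\ref{Conj0.3}, the left term has vanishing Kato homology in positive degrees, giving an isomorphism $KH^{(0)}_{a}(\mathcal{X},\Z/n\Z) \cong KH^{(1)}_{a-1}(X_K,\Z/n\Z)$ for sufficiently large $a$.

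Second, analyse the target of the defining map of $KC^{(0)}(\mathcal{X}/U,\Z/n\Z)$. For each $v \in U_0$, the integral model $\mathcal{X}_{\O_{K_v}}$ is regular, proper, and flat, so Conjecture~\ref{Conj5.1} combined with the localization triangle~\eqref{localizationsequencekatohomology} identifies $KH^{(1)}_{a}(X_{K_v},\Z/n\Z)$ with $KH^{(0)}_{a}(X_v,\Z/n\Z)$, which vanishes in positive degrees by Conjecture~\ref{Conj0.3}. Hence enlarging the target from $\bigoplus_{v \in \sum_U} KC^{(1)}(X_{K_v},\Z/n\Z)$ to the full local sum $\bigoplus_{v \in P_K} KC^{(1)}(X_{K_v},\Z/n\Z)$ modifies the cone only by a contribution acyclic in positive degrees. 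A $3 \times 3$-lemma diagram chase then identifies $KH^{(0)}_{a}(\mathcal{X}/U,\Z/n\Z)$, for $a > 0$, with the Kato homology of the global Hasse cone
\[
\mathrm{cone}\bigl[KC^{(1)}(X_K,\Z/n\Z) \r \bigoplus_{v \in P_K} KC^{(1)}(X_{K_v},\Z/n\Z)\bigr],
\]
which vanishes by Conjecture~\ref{Conj04}.

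The main obstacle is the precise compatibility of these various triangles into a single commutative diagram with consistent shift and sign conventions, in particular checking that the differentials from the niveau filtration match those in Kato's explicit residue complexes (cf.~\cite{JSS}). The degree-zero case requires separate treatment: one must correctly absorb the residual $\Z/n\Z$-contribution of Conjecture~\ref{Conj04} against the low-degree boundary terms of the diagram chase and of the stratification long exact sequence, where $\bigoplus_{v \in U_0} KH^{(0)}_{0}(X_v,\Z/n\Z)$ no longer vanishes. Finally, the $p$-primary part at residue characteristics of $U$ demands the logarithmic de Rham--Witt refinements of Conjectures~\ref{Conj0.3}, \ref{Conj04}, and~\ref{Conj5.1} established by Jannsen--Saito and Kerz--Saito.
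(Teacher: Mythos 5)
First, note that the paper does not prove this statement itself: it is Kato's Conjecture 0.5, and the paper's Theorem~\ref{ThmKatoconj05} simply cites \cite[Thm.~8.4]{KeS12} for the case where $n$ is invertible on $U$ (the only case asserted). Your overall strategy --- a d\'evissage reducing $KC^{(0)}(\mathcal{X}/U,\Z/n\Z)$ to the global Hasse cone of the generic fiber via the localization sequence, and then invoking Conjectures~\ref{Conj0.3}, \ref{Conj04} and \ref{Conj5.1} --- is indeed the standard route and is the same mechanism the paper uses for the analogous semi-local statement (Theorem~\ref{ThmKatoconj06}).

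There is, however, a genuine gap: you twice invoke Conjecture~\ref{Conj0.3} to conclude that $KH^{(0)}_a(X_v,\Z/n\Z)$ vanishes in positive degrees for the closed fibers $X_v$, $v\in U_0$. Conjecture~\ref{Conj0.3} applies only to \emph{smooth} proper schemes over a finite field, whereas the hypotheses here only make $\mathcal{X}$ regular with smooth \emph{generic} fiber; at the finitely many $v\in U_0$ of bad reduction the fiber $X_v$ is singular and its Kato homology is in general nonzero (for instance, for a strictly semistable fiber it computes the homology of the dual configuration complex, cf.\ the use of \cite[Thm.~1.6]{JS03} in Lemma~\ref{lemmafiniteness}(3)). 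Consequently your claim that enlarging the target of the cone from $\bigoplus_{v\in\Sigma_U}KC^{(1)}(X_{K_v},\Z/n\Z)$ to $\bigoplus_{v\in P_K}KC^{(1)}(X_{K_v},\Z/n\Z)$ changes it only by an acyclic contribution is false as stated. The correct argument does not need any vanishing of $KH^{(0)}_*(X_v)$: Conjecture~\ref{Conj5.1} (via the triangle inducing (\ref{localizationsequencekatohomology})) says that the residue map $\partial\colon KC^{(1)}(X_{K_v},\Z/n\Z)\to KC^{(0)}(X_v,\Z/n\Z)$ is a quasi-isomorphism for $v\in U_0$, and these added summands then \emph{cancel} against the subcomplex $\bigoplus_{v\in U_0}KC^{(0)}(X_v,\Z/n\Z)$ of $KC^{(0)}(\mathcal{X},\Z/n\Z)$ in an octahedron, identifying $KC^{(0)}(\mathcal{X}/U,\Z/n\Z)$ with the global Hasse cone up to shift; only then does Conjecture~\ref{Conj04} apply. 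Relatedly, your first step (``$KH^{(0)}_a(\mathcal{X})\cong KH^{(1)}_{a-1}(X_K)$ for sufficiently large $a$'') rests on the same false vanishing and in any case would not cover the full range $a\geq 0$. Two further points: the degree-zero case, where the $\Z/n\Z$ of Conjecture~\ref{Conj04} must cancel against $\bigoplus_{v\in U_0}KH^{(0)}_0(X_v)\cong\bigoplus_{v\in U_0}\Z/n\Z$, is acknowledged but not actually carried out; and the closing appeal to $p$-primary refinements overstates what is available --- Theorem~\ref{ThmKatoconj03} gives the $p$-part only for $a\leq 4$ and Conjecture~\ref{Conj5.1} with $p$-coefficients is open (Remark~\ref{remarkkatoconj51}), which is precisely why the paper asserts the result only for $n$ invertible on $U$.
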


We add the following conjecture to the list:

\begin{conj}\label{Conj0.6}
Let $\O_S$ be a regular semilocal subring of a global field $K$. Let $\mathcal{S}=\Spec \O_S$.
Let $\mathcal{X}$ be a regular scheme proper and flat over $\mathcal{S}$ with smooth generic fiber $X$. 
Let 
$$KC^{(0)}(\mathcal{X}/S,\Z/n\Z)=\mathrm{coker}[KC^{(0)}(\mathcal{X},\Z/n\Z)[1]\r \bigoplus_{v\in \sum_S} KC^{(1)}(X_{K_v},\Z/n\Z)],$$
where $\sum_S$ denotes the set of places $v\in P_K$ which do not correspond to closed points of $S$.
Then
$$KH_a^{(0)}(\mathcal{X}/S,\Z/n\Z)=0 \quad \mathrm{for} \quad a> 0.$$
\end{conj}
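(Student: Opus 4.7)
The plan is to reduce Conjecture~\ref{Conj0.6} to the previously known Conjecture~\ref{Conj0.5} via a continuity argument, writing $\mathcal{S}$ as a cofiltered inverse limit of opens in $\Spec(\O_K)$. Let $\mathfrak{p}_1,\ldots,\mathfrak{p}_r$ denote the closed points of $\mathcal{S}$ and let $\mathcal{U}$ be the directed set (under reverse inclusion) of non-empty open subschemes $U\subset \Spec(\O_K)$ containing $\{\mathfrak{p}_1,\ldots,\mathfrak{p}_r\}$. Then $\mathcal{S}=\varprojlim_{U\in \mathcal{U}}U$ with affine transition morphisms. By standard spreading-out, $\mathcal{X}/\mathcal{S}$ extends to a regular, proper, flat model $\mathcal{X}_{U_0}/U_0$ with smooth generic fiber for some $U_0\in \mathcal{U}$; restricting to the cofinal subsystem of $U\subseteq U_0$ yields a compatible family of models with $\mathcal{X}=\varprojlim_U\mathcal{X}_U$. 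Moreover, as $U$ shrinks, $\sum_U$ grows monotonically with $\sum_S=\bigcup_U\sum_U$.

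The next step is to verify continuity of the Kato complex in this limit. Each term $\bigoplus_{x\in \mathcal{X}_a}H^{a+1}(k(x),\Z/n(a))$ is a direct sum of Galois cohomology groups of residue fields, and points of $\mathcal{X}=\varprojlim_U\mathcal{X}_U$ identify with coherent families of points in the $\mathcal{X}_U$ whose residue fields are the corresponding filtered colimits. The standard continuity of étale cohomology with torsion coefficients under filtered colimits of fields therefore gives
\[
KC^{(0)}(\mathcal{X},\Z/n\Z)=\varinjlim_{U}KC^{(0)}(\mathcal{X}_U,\Z/n\Z).
\]
Combining this with $\bigoplus_{v\in\sum_S}KC^{(1)}(X_{K_v},\Z/n\Z)=\varinjlim_U\bigoplus_{v\in\sum_U}KC^{(1)}(X_{K_v},\Z/n\Z)$ and the fact that filtered colimits are exact and commute with cokernels, one obtains
\[
KH^{(0)}_a(\mathcal{X}/S,\Z/n\Z)=\varinjlim_U KH^{(0)}_a(\mathcal{X}_U/U,\Z/n\Z).
\]

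Invoking Conjecture~\ref{Conj0.5} (known by Jannsen--Kerz--Saito under the standing hypotheses) for each pair $(\mathcal{X}_U,U)$ gives $KH^{(0)}_a(\mathcal{X}_U/U,\Z/n\Z)=0$ for $a\geq 0$, and passing to the filtered colimit yields the desired vanishing (in fact for all $a\geq 0$, which is stronger than the $a>0$ claimed). The main technical obstacle is the compatibility between the degreewise \emph{cokernel} used to define $KC^{(0)}(\mathcal{X}/S,\Z/n\Z)$ in Conjecture~\ref{Conj0.6} and the mapping \emph{cone} used for $KC^{(0)}(\mathcal{X}_U/U,\Z/n\Z)$ in Definition~\ref{definitionkatocomplexes}(3); one must check via the localization triangle \eqref{localizationsequencekatohomology} at each closed point of $\mathcal{S}$ that the connecting map $KC^{(0)}(\mathcal{X}_U,\Z/n\Z)[1]\to\bigoplus_{v\in\sum_U}KC^{(1)}(X_{K_v},\Z/n\Z)$ is degreewise injective, so that cone and cokernel agree up to shift in the relevant degrees. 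Once this bookkeeping is settled, the continuity argument above completes the proof.
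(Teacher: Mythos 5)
Your reduction to Theorem \ref{ThmKatoconj05} by writing $\mathcal{S}=\varprojlim_U U$ founders at the step where you assert $KH^{(0)}_a(\mathcal{X}/S,\Z/n\Z)=\varinjlim_U KH^{(0)}_a(\mathcal{X}_U/U,\Z/n\Z)$: the directed system on the right does not exist as you describe it. A map of cones (or cokernels) $KC^{(0)}(\mathcal{X}_U/U,\Z/n\Z)\to KC^{(0)}(\mathcal{X}_{U'}/U',\Z/n\Z)$ would require the square formed by the restriction $KC^{(0)}(\mathcal{X}_U,\Z/n\Z)\to KC^{(0)}(\mathcal{X}_{U'},\Z/n\Z)$ and the inclusion $\bigoplus_{v\in\sum_U}KC^{(1)}(X_{K_v},\Z/n\Z)\hookrightarrow\bigoplus_{v\in\sum_{U'}}KC^{(1)}(X_{K_v},\Z/n\Z)$ to commute; but for a place $v_0\in U\setminus U'$ the $v_0$-component of one composite is zero, while the $v_0$-component of the other is the generally nonzero restriction $KC^{(0)}(\mathcal{X}_U,\Z/n\Z)[1]\to KC^{(1)}(X_K,\Z/n\Z)\to KC^{(1)}(X_{K_{v_0}},\Z/n\Z)$. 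Repairing this is not mere bookkeeping: comparing $KC^{(0)}(\mathcal{X}_U/U)$ with $KC^{(0)}(\mathcal{X}_{U'}/U')$ across the places in $U\setminus U'$ is exactly where the local Kato conjecture (Theorem \ref{ThmKatoconj51}) must enter, via the localization triangle \eqref{localizationsequencekatohomology} identifying $KH^{(1)}_a(X_{K_{v_0}},\Z/n\Z)$ with $KH^{(0)}_a(X_{v_0},\Z/n\Z)$; your argument uses no local input whatsoever, which is a sign that a purely formal colimit cannot carry the weight.

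The secondary issue you flag, cone versus cokernel, is real, but your proposed fix fails: the map $KC^{(0)}(\mathcal{X}_U,\Z/n\Z)[1]\to\bigoplus_{v\in\sum_U}KC^{(1)}(X_{K_v},\Z/n\Z)$ factors through the quotient $KC^{(1)}(X_K,\Z/n\Z)$ and hence annihilates every summand indexed by a point of a closed fibre of $\mathcal{X}_U$ over $U$, so it is never degreewise injective. This, together with your overshoot (vanishing for all $a\geq 0$, whereas the statement deliberately claims only $a>0$), indicates the route is unsound as written. For comparison, the paper argues directly: since $\mathcal{S}$ is semilocal, the cokernel defining $KC^{(0)}(\mathcal{X}/S,\Z/n\Z)$ equals $\mathrm{coker}[KC^{(1)}(X,\Z/n\Z)\to\bigoplus_{v\in\sum_S}KC^{(1)}(X_{K_v},\Z/n\Z)]$, one uses Theorem \ref{ThmKatoconj51} with \eqref{localizationsequencekatohomology} at the finitely many closed points of $\mathcal{S}$ to pass to the full sum over $P_K$, and the vanishing for $a>0$ then comes from Theorem \ref{ThmKatoconj04} (Conjecture \ref{Conj04}), following the pattern of Kerz--Saito's proof of Theorem \ref{ThmKatoconj05}.
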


The following is known about these conjectures:
\begin{theorem}\label{ThmKatoconj03}(\cite[Thm. 8.1]{KeS12})
Conjecture \ref{Conj0.3} holds if $n$ is invertible on $X$. If $n$ is not invertible on $X$, then it holds for $a\leq 4$.
\end{theorem}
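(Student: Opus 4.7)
The plan is to establish the vanishing via the Jannsen--Kerz--Saito induction on $d = \dim X$, using a refined Bertini theorem over the finite base field together with the localization sequence in Kato homology.

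First I would handle the base cases $d \leq 1$. For $d = 0$ the complex $KC^{(0)}(X,\Z/n\Z)$ is concentrated in degree $0$, so $KH_a^{(0)}(X,\Z/n\Z) = 0$ trivially for $a > 0$. For $d = 1$, the only non-trivial homology in positive degree is $KH_1^{(0)}(X,\Z/n\Z)$, which via the identification $H^2(k(x),\Z/n\Z(1)) = \Br(k(x))[n]$ and Bloch's theorem (equivalently the Hasse--Brauer--Noether sequence for the function field of a curve over a finite field) reduces to the injectivity of $\Br(X)[n] \hookrightarrow \bigoplus_{x \in X_0} \Br(k(x))[n]$, which is classical.

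For the inductive step with $d \geq 2$, I would embed $X$ into some $\P^N$ by a sufficiently positive very ample line bundle and use a Bertini theorem over finite fields (Poonen, sharpened by Kerz--Saito) to produce a smooth hypersurface section $Y \subset X$ with open complement $U = X \setminus Y$. The localization triangle
$$KC^{(0)}(Y,\Z/n\Z)[-1] \r KC^{(0)}(X,\Z/n\Z) \r KC^{(0)}(U,\Z/n\Z)$$
combined with the inductive hypothesis $KH_a^{(0)}(Y,\Z/n\Z) = 0$ for $a > 0$ reduces the problem to controlling $KH_a^{(0)}(U,\Z/n\Z)$. The decisive ingredient is a \emph{refined} Bertini statement: given finitely many classes $\xi_1,\ldots,\xi_m \in KH_a^{(0)}(X,\Z/n\Z)$, one wants a single smooth $Y$ such that each $\xi_j$ already dies on $U$. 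Together with a weight/purity argument of Deligne--Jannsen applied to the étale cohomology of $U$, this forces the class to come from $Y$ and hence vanish by induction.

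The main obstacle is exactly the production of this refined hypersurface section: over a finite field the linear system contains only countably many members, so generic smoothness does not deliver a $Y$ with the prescribed cohomological restriction behaviour. Kerz--Saito circumvent this by combining Poonen's density theorem with a Chebotarev-type argument to control how $\xi_j$ restricts to candidate sections, then using de Jong's alterations to reduce further singularity issues. For the part of $n$ prime to $p = \mathrm{ch}(k)$, all required inputs (purity for $\mu_n^{\otimes \bullet}$, weights, alterations) are available in every degree, yielding the full statement. For the $p$-primary part one instead works with the logarithmic de Rham--Witt sheaves $W_r\Omega^{\bullet}_{X,\log}$ and Kato--Moser purity; the corresponding weight and alteration results are only established in a limited range, which is the source of the restriction $a \leq 4$.
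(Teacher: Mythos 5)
The paper does not prove this statement at all: Theorem \ref{ThmKatoconj03} is quoted verbatim from \cite{KeS12} (Thm.~8.1, for $n$ invertible) together with \cite{JS} (for the $p$-primary part with $a\leq 4$; see Remark \ref{remarkkatoconj51}), so there is no internal argument to compare yours against. Your outline does correctly name the broad Jannsen--Kerz--Saito strategy --- induction on $\dim X$, Bertini over finite fields, the localization triangle in Kato homology, Deligne's weights, and alterations --- but as a proof it is essentially a table of contents for \cite{KeS12}. The step you yourself identify as ``the main obstacle'' (producing a hypersurface section on whose complement finitely many prescribed classes die, and the accompanying weight/purity argument on $U$) is precisely the content of the Kerz--Saito Lefschetz condition and their refined Bertini theorem; invoking it by name does not establish the statement. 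You also omit how Gabber's prime-to-$\ell$ alterations are used to reduce from the singular varieties produced along the way back to the smooth projective case, which is where the actual work lies.

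Two concrete inaccuracies are worth fixing. First, in the base case $d=1$ the degree-zero terms of $KC^{(0)}(X,\Z/n\Z)$ are $H^1(k(x),\Z/n\Z)$ with $k(x)$ \emph{finite}, so $\Br(k(x))[n]=0$ and your map $\Br(X)[n]\hookrightarrow\bigoplus_x\Br(k(x))[n]$ is not the relevant one; the group $KH^{(0)}_1$ is $\ker[H^2(k(X),\mu_n)\to\bigoplus_{x\in X_0}H^1(k(x),\Z/n\Z)]$, and its vanishing is the Hasse principle for the Brauer group of the global function field $k(X)$ combined with $\Br(X)=0$ for a smooth proper curve over a finite field. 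Second, the restriction $a\leq 4$ in the $p$-primary case is not caused by ``weight and alteration results for $W_r\Omega^\bullet_{\log}$ being established in a limited range'': alterations are unavailable altogether for $p$-torsion coefficients because their degree may be divisible by $p$, so \cite{JS} must use genuine resolution of singularities, which is known only in low dimension --- that is the source of the bound.
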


\begin{theorem}\label{ThmKatoconj51}(\cite[Thm. 8.1]{KeS12})
Conjecture \ref{Conj5.1} holds if $n$ is invertible on $X$.
\end{theorem}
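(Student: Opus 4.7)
The approach I would take is that of Kerz–Saito, reducing the statement over local rings to its analogue over finite fields (Theorem \ref{ThmKatoconj03}).

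First, I would exploit the localization sequence (\ref{localizationsequencekatohomology})
$$\ldots\r KH^{(0)}_{a+1}(\mathcal{X}, \Z/n\Z)\r KH^{(1)}_{a}(X_{K_v}, \Z/n\Z)\r KH^{(0)}_{a}(\mathcal{X}_{v}, \Z/n\Z)\r KH^{(0)}_{a}(\mathcal{X}, \Z/n\Z)\r\ldots$$
together with Theorem \ref{ThmKatoconj03} applied to the special fiber $\mathcal{X}_v$, which is smooth and proper over the finite residue field with $n$ invertible. This yields $KH^{(0)}_a(\mathcal{X}_v, \Z/n\Z) = 0$ for $a > 0$, which forces $KH^{(0)}_a(\mathcal{X}, \Z/n\Z) \cong KH^{(1)}_{a-1}(X_{K_v}, \Z/n\Z)$ for $a \geq 2$ and identifies the low-degree pieces as well-defined subquotients. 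In this way the problem is reduced to vanishing statements for the Kato homology of the smooth projective generic fiber $X_{K_v}$ over the local field $K_v$.

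Second, to control $KH^{(1)}_*(X_{K_v}, \Z/n\Z)$, I would apply de Jong's alterations to replace $X_{K_v}$ by a regular scheme with strictly semistable reduction over a finite extension of $\O_{K_v}$, and then analyze the Kato homology through the weight spectral sequence attached to the nearby cycles complex on such a model. Its graded pieces are built from Kato homology groups of smooth projective varieties over the residue field, to which Theorem \ref{ThmKatoconj03} again applies. The Bloch–Kato / norm residue isomorphism of Voevodsky–Rost enters here in identifying the relevant Galois cohomology with quotients of Milnor $K$-theory, which is what makes the weight argument and the purity identifications at the boundary strata effective.

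The main obstacle is controlling the alteration step, which a priori introduces a generically finite cover whose degree need not be coprime to $n$. The technical heart is therefore the Bertini-type theorems for schemes over finite fields due to Jannsen–Saito–Kerz, which allow one to choose smooth hyperplane sections preserving regularity and the semistability hypothesis so as to carry out an induction on $\dim \mathcal{X}$, together with norm/trace arguments which descend the relevant cohomology classes along the alteration. Matching up the very low degrees $a = 0, 1$ additionally requires input from local class field theory, which pins down $KH^{(1)}_0(X_{K_v}, \Z/n\Z)$ via the Brauer group of $K_v$.
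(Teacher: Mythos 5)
This theorem is quoted in the paper directly from \cite[Thm. 8.1]{KeS12} without proof, so your sketch has to be measured against Kerz--Saito's actual argument, and its first reduction step contains a genuine error. The special fiber $\mathcal{X}_v$ of a regular, proper, flat scheme over $\O_{K_v}$ is in general \emph{not} smooth (if it were, one would essentially be in the good-reduction case and the statement would lose most of its content), so Theorem \ref{ThmKatoconj03} does not apply to it and $KH^{(0)}_a(\mathcal{X}_v,\Z/n\Z)$ need not vanish for $a>0$. Consequently your reduction of Conjecture \ref{Conj5.1} to ``vanishing statements for the Kato homology of the smooth projective generic fiber $X_{K_v}$'' is a reduction to a false statement: granting Conjecture \ref{Conj5.1}, the localization sequence (\ref{localizationsequencekatohomology}) gives $KH^{(1)}_a(X_{K_v},\Z/n\Z)\cong KH^{(0)}_a(\mathcal{X}_v,\Z/n\Z)$, and by the Jannsen--Saito results cited in Lemma \ref{lemmafiniteness}(3) the latter is (with divisible coefficients) the homology of the dual complex of the special fiber. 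For a Tate elliptic curve over $K_v$ with a regular model of type $I_m$ the dual graph is a circle, so $KH^{(1)}_1(X_{K_v},\Z/n\Z)\cong\Z/n\Z\neq 0$. This is precisely why Lemma \ref{lemmafiniteness}(1) of the paper asserts only \emph{finiteness} of these groups, never vanishing.

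The actual proof runs in the opposite direction: Kerz and Saito establish the vanishing of $KH^{(0)}_a(\mathcal{X},\Z/n\Z)$ over $\O_{K_v}$ directly, by verifying their Lefschetz condition for the \'etale homology theory over the henselian base --- an induction on $\dim\mathcal{X}$ built on a refined Bertini theorem for hypersurface sections over a discrete valuation ring, Artin's affine vanishing theorem, and the Bloch--Kato conjecture --- and only afterwards is the structure of $KH^{(1)}_*(X_{K_v})$ for the generic fiber deduced from the localization sequence. Your second and third paragraphs (alterations, weight spectral sequences, Bertini over finite fields, norm arguments) name the right toolbox, but aim it at the wrong target: correctly applied, that machinery computes the groups you are hoping to kill and shows that they are nonzero whenever the reduction is bad enough for the dual complex to have homology.
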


\begin{theorem}\label{ThmKatoconj04}(\cite[Thm. 0.9]{Ja16}, \cite[Thm. 8.3]{KeS12})
Conjecture \ref{Conj04} holds if $n$ is invertible on $X$.
\end{theorem}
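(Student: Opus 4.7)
The plan is to deduce Conjecture~\ref{Conj04} for $n$ invertible from Theorem~\ref{ThmKatoconj03} (the finite-field case) and Theorem~\ref{ThmKatoconj51} (the henselian local case), together with the semi-global vanishing of Conjecture~\ref{Conj0.5} for $n$ invertible, a further theorem of \cite{KeS12}. First, choose a dense open $U\subset \Spec(\O_K)$ on which $n$ is invertible and over which $X$ extends to a regular, proper and flat model $\mathcal{X}$ with smooth special fibers $X_v$ at each closed point $v$; set $\Sigma_U:=P_K\setminus U$, a finite set containing the bad and archimedean places.

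At each good place $v\in U^{(1)}$, Theorem~\ref{ThmKatoconj51} applied to $\mathcal{X}_{\O_{K_v}}$ gives $KH^{(0)}_a(\mathcal{X}_{\O_{K_v}},\Z/n\Z)=0$ for $a\geq 0$. The localization triangle~(\ref{localizationsequencekatohomology}) then identifies $KH^{(1)}_a(X_{K_v},\Z/n\Z)$ with $KH^{(0)}_a(X_v,\Z/n\Z)$, which by Theorem~\ref{ThmKatoconj03} vanishes for $a>0$ and equals $\Z/n\Z$ at $a=0$ via the trace map of $X_v/k(v)$. Consequently $\bigoplus_{v\in P_K}KH^{(1)}_a(X_{K_v},\Z/n\Z)$ reduces at $a>0$ to the finite sum over $\Sigma_U$, while the $a=0$ good-place contributions are completely explicit.

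Next, the codimension filtration of $\mathcal{X}$ by its special fibers produces a short exact sequence of Kato complexes whose long exact sequence, after killing the good-place terms via Theorem~\ref{ThmKatoconj03}, yields isomorphisms $KH^{(1)}_a(X,\Z/n\Z)\cong KH^{(0)}_{a+1}(\mathcal{X},\Z/n\Z)$ for $a\geq 1$ and a four-term exact sequence at $a=0$ involving the trace terms $\bigoplus_{v\in U^{(1)}}\Z/n\Z$. In parallel, the cone triangle defining $KC^{(0)}(\mathcal{X}/U)$ combined with the vanishing $KH^{(0)}_a(\mathcal{X}/U,\Z/n\Z)=0$ for $a\geq 0$ (Conjecture~\ref{Conj0.5}) gives a second identification $KH^{(0)}_{a+1}(\mathcal{X},\Z/n\Z)\cong \bigoplus_{v\in \Sigma_U}KH^{(1)}_a(X_{K_v},\Z/n\Z)$. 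Splicing these two identifications and merging the good-place analysis yields the desired isomorphism for $a>0$; at $a=0$ the aggregate of trace maps from both good and bad places matches the global reciprocity sum to produce the required four-term exact sequence with cokernel $\Z/n\Z$.

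The main obstacle is invoking Conjecture~\ref{Conj0.5} in the final step: proving the semi-global vanishing for $n$ invertible in \cite{KeS12} requires de Jong alterations, higher-dimensional unramified class field theory, and an induction on dimension bootstrapping from the finite-field Kato conjecture. A further delicate point is the degree-zero assembly, where identifying the aggregate trace map with the reciprocity cokernel $\Z/n\Z$ relies on a compatibility with global Brauer--Hasse--Noether.
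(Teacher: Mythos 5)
The paper offers no proof of this statement: it is imported from the literature, with \cite[Thm.~0.9]{Ja16} as the primary source. Your d\'evissage --- identifying $KH^{(1)}_a(X_{K_v},\Z/n\Z)\cong KH^{(0)}_a(X_v,\Z/n\Z)$ at good places via Theorem~\ref{ThmKatoconj51} and the triangle (\ref{localizationsequencekatohomology}), splitting $KC^{(0)}(\mathcal{X},\Z/n\Z)$ into its fibral part $\bigoplus_{v}KC^{(0)}(X_v,\Z/n\Z)$ and its generic part $KC^{(1)}(X_K,\Z/n\Z)$, and then feeding in the vanishing of $KH^{(0)}_a(\mathcal{X}/U,\Z/n\Z)$ --- is essentially Kato's own argument that Conjectures~\ref{Conj0.3}, \ref{Conj5.1} and \ref{Conj0.5} taken together are equivalent to Conjecture~\ref{Conj04}, and the individual reduction steps are sound (modulo two points you leave implicit: that the composite of your two identifications really is the restriction map, and the degree-zero bookkeeping, where injectivity of $KH^{(1)}_0(X,\Z/n\Z)\to\bigoplus_v KH^{(1)}_0(X_{K_v},\Z/n\Z)$ and the identification of the cokernel with $\Z/n\Z$ are asserted rather than derived).

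The genuine gap is circularity. In \cite{KeS12} the semi-global vanishing (Theorem~\ref{ThmKatoconj05}, i.e.\ \cite[Thm.~8.4]{KeS12}) is not established by the independent alteration/Lefschetz induction you invoke in your last paragraph; it is itself \emph{deduced} from the finite-field case, the henselian local case, and Jannsen's global theorem \cite[Thm.~0.9]{Ja16} --- that is, from Conjecture~\ref{Conj04} --- by running exactly your d\'evissage in the opposite direction (compare the paper's own proof of Theorem~\ref{ThmKatoconj06}, which is this same reduction). So your argument reproves a known equivalence but does not prove Theorem~\ref{ThmKatoconj04}: the irreducible content is Jannsen's direct proof of the Hasse principle over the global field, which proceeds by weight arguments, de Jong's alterations and a Bertini-type induction, none of which appear in your proposal. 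To give an actual proof you would have to either reproduce Jannsen's argument or supply a proof of Conjecture~\ref{Conj0.5} that does not pass through Conjecture~\ref{Conj04}. A secondary issue: for $K$ a number field with real places and $n$ even, the complexes $KC^{(1)}(X_{K_v},\Z/n\Z)$ at archimedean $v$ do not vanish, and your treatment of their contribution (and of the degree-zero trace/reciprocity matching) would need to be made precise.
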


\begin{theorem}\label{ThmKatoconj05}(\cite[Thm. 8.4]{KeS12})
Conjecture \ref{Conj0.5} holds if $n$ is invertible on $U$.
\end{theorem}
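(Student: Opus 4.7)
The plan is to follow Kerz--Saito \cite[Thm.~8.4]{KeS12} and combine Theorems~\ref{ThmKatoconj03}, \ref{ThmKatoconj51}, and \ref{ThmKatoconj04} via the octahedral axiom applied to a canonical factorization of the map defining $KC^{(0)}(\mathcal{X}/U)$.

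First, I would factor the map $\beta\colon KC^{(0)}(\mathcal{X})[1] \to \bigoplus_{v\in \Sigma_U} KC^{(1)}(X_{K_v})$ appearing in the cone as $\beta = g \circ \alpha$, where $\alpha\colon KC^{(0)}(\mathcal{X})[1] \to KC^{(1)}(X_K)$ is the generic-fiber restriction and $g\colon KC^{(1)}(X_K)\to \bigoplus_{v\in \Sigma_U} KC^{(1)}(X_{K_v})$ is the partial restriction to places outside $U$. The cone of $\alpha$ is the arithmetic localization $\bigoplus_{v\in U_0} KC^{(0)}(\mathcal{X}_{k_v})$ running over the closed fibers of $\mathcal{X}\to U$, and the octahedral axiom supplies an exact triangle
$$\bigoplus_{v\in U_0} KC^{(0)}(\mathcal{X}_{k_v}) \longrightarrow KC^{(0)}(\mathcal{X}/U) \longrightarrow \mathrm{cone}(g) \xrightarrow{+}.$$

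Next, I would pin down $\mathrm{cone}(g)$ by comparing $g$ with the full restriction $\tilde g\colon KC^{(1)}(X_K) \to \bigoplus_{v\in P_K} KC^{(1)}(X_{K_v})$. By Theorem~\ref{ThmKatoconj04}, $\mathrm{cone}(\tilde g)$ has homology concentrated in degree zero, equal to $\Z/n\Z$; the splitting $P_K = \Sigma_U \sqcup U_0$ produces a short exact sequence of complexes
$$0 \to \bigoplus_{v\in U_0} KC^{(1)}(X_{K_v}) \to \mathrm{cone}(\tilde g) \to \mathrm{cone}(g) \to 0,$$
so that $H_\bullet(\mathrm{cone}(g))$ is controlled by $\bigoplus_{v\in U_0} KH^{(1)}_\bullet(X_{K_v})$ and the reciprocity $\Z/n\Z$. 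For every $v \in U_0$, Theorem~\ref{ThmKatoconj51} applied to the good model $\mathcal{X}_{\O_{K_v}}$, combined with the local localization sequence~(\ref{localizationsequencekatohomology}), provides isomorphisms $KH^{(1)}_a(X_{K_v}) \cong KH^{(0)}_a(\mathcal{X}_{k_v})$ for $a\geq 0$, and Theorem~\ref{ThmKatoconj03} forces the right-hand side to vanish for $a>0$.

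Finally I would run the long exact sequence of the octahedral triangle. For $a\geq 2$ every term neighboring $KH^{(0)}_a(\mathcal{X}/U)$ vanishes by the previous step, so the vanishing falls out directly. In the low-degree cases the octahedral connecting homomorphism has to be identified with the inclusion of the kernel of the Brauer--Hasse--Noether-type degree map $\bigoplus_{v\in U_0} KH^{(0)}_0(\mathcal{X}_{k_v}) \to \Z/n\Z$ arising from Theorem~\ref{ThmKatoconj04}, and the cancellation against the $\Z/n\Z$ appearing in $H_0(\mathrm{cone}(g))$ finishes the argument. The hard part is precisely this low-degree compatibility: one must check that the reciprocity $\Z/n\Z$ produced abstractly by Kato's Conjecture~0.4 and the degree map on the closed fibers of $\mathcal{X}\to U$ are matched by the octahedral connecting morphism. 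This is a manifestation of global reciprocity at the level of Kato complexes and is the technical heart of \cite[\S8]{KeS12}; the higher-degree vanishing, by contrast, is essentially a formal diagram chase once the three input theorems are in hand.
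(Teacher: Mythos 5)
First, note that the paper offers no proof of Theorem~\ref{ThmKatoconj05} at all: it is imported wholesale from \cite[Thm.~8.4]{KeS12}, where it is obtained from Kerz--Saito's general Lefschetz-type induction for homology theories rather than by dévissage. Your strategy --- factoring the defining map of $KC^{(0)}(\mathcal{X}/U)$ through $KC^{(1)}(X_K)$, applying the octahedral axiom, and feeding in Theorems~\ref{ThmKatoconj03}, \ref{ThmKatoconj51} and \ref{ThmKatoconj04} --- is nevertheless a legitimate and classical alternative route, and it is essentially the argument the paper itself sketches for the neighbouring Theorem~\ref{ThmKatoconj06}. Your identification of $\mathrm{cone}(\alpha)$ with $\bigoplus_{v}KC^{(0)}(\mathcal{X}_{k_v})$ over the closed points of $U$, and your honest flagging of the low-degree reciprocity compatibility (matching the $\Z/n\Z$ of Conjecture~\ref{Conj04} against the degree maps on the closed fibers) as the technical heart, are both correct.

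There is, however, one genuine gap. You claim that for every closed point $v$ of $U$ the isomorphism $KH^{(1)}_a(X_{K_v},\Z/n\Z)\cong KH^{(0)}_a(\mathcal{X}_{k_v},\Z/n\Z)$ (which is fine, by Theorem~\ref{ThmKatoconj51} and the sequence~(\ref{localizationsequencekatohomology})) combines with Theorem~\ref{ThmKatoconj03} to kill the right-hand side for $a>0$. But Conjecture~\ref{Conj0.3} concerns \emph{smooth} proper schemes over finite fields, whereas the hypotheses only make the generic fiber of $\mathcal{X}\to U$ smooth: finitely many closed fibers $\mathcal{X}_{k_v}$ with $v\in U$ may be singular, and the Kato homology of a singular proper variety over a finite field does not vanish in positive degrees (for the special fiber of the minimal regular model of a Tate elliptic curve, $KH^{(0)}_1$ is governed by $H_1$ of the dual graph and is nonzero). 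Consequently your assertion that ``for $a\geq 2$ every term neighbouring $KH^{(0)}_a(\mathcal{X}/U)$ vanishes'' fails at the bad fibers, and the higher-degree vanishing is not the formal diagram chase you describe: the nonzero contributions from $\mathrm{cone}(\alpha)$ and from $\mathrm{cone}(g)$ must be shown to cancel against each other via the octahedral connecting map, i.e.\ the same kind of compatibility you defer to low degrees is needed in all degrees. The clean repair is to first use Theorem~\ref{ThmKatoconj51} together with the local triangles to show that $KH^{(0)}_*(\mathcal{X}/U)\cong KH^{(0)}_*(\mathcal{X}_{U'}/U')$ for any nonempty open $U'\subset U$, then shrink $U$ to an open over which $\mathcal{X}$ is smooth, after which your argument (including the remaining reciprocity check in degrees $0$ and $1$) applies verbatim.
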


\begin{theorem}\label{ThmKatoconj06}
Conjecture \ref{Conj0.6} holds if $n$ is invertible on $\mathcal{S}$.
\end{theorem}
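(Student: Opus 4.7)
The strategy is to reduce to Theorem \ref{ThmKatoconj05} by exhibiting $\mathcal{S}$ as a cofiltered limit of opens $U \subset \Spec(\O_K)$ containing $\mathcal{S}$ and then commuting the Kato complexes with this limit.

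The first step is spreading out. Since $\O_S$ is regular and semilocal, $\O_S = \varinjlim_{U \supset \mathcal{S}} \O(U)$, and standard noetherian approximation arguments produce an open $U_0 \supset \mathcal{S}$ on which $n$ is invertible, together with a regular, proper, flat scheme $\mathcal{X}_{U_0}/U_0$ with smooth generic fiber $X$ and $\mathcal{X}_{U_0} \times_{U_0} \mathcal{S} = \mathcal{X}$. For every $U$ with $\mathcal{S} \subset U \subset U_0$, the base change $\mathcal{X}_U := \mathcal{X}_{U_0} \times_{U_0} U$ inherits all of these properties, so Theorem \ref{ThmKatoconj05} applies and yields $KH_a^{(0)}(\mathcal{X}_U/U, \Z/n\Z) = 0$ for all $a \geq 0$.

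Next, I identify $KC^{(0)}(\mathcal{X}/S, \Z/n\Z)$ as the filtered colimit of $KC^{(0)}(\mathcal{X}_U/U, \Z/n\Z)$ along opens $U$ shrinking toward $\mathcal{S}$. For $U' \subset U$ the scheme-theoretic complement $\mathcal{X}_U \setminus \mathcal{X}_{U'}$ is the disjoint union of the closed fibers $\mathcal{X}_v$ for $v \in U \setminus U'$; since an open immersion preserves the closure dimension of points, this produces a term-wise short exact sequence of Kato complexes
\[
0 \to \bigoplus_{v \in U \setminus U'} KC^{(0)}(\mathcal{X}_v, \Z/n\Z) \to KC^{(0)}(\mathcal{X}_U, \Z/n\Z) \to KC^{(0)}(\mathcal{X}_{U'}, \Z/n\Z) \to 0.
\]
Passing to the filtered colimit yields $KC^{(0)}(\mathcal{X}, \Z/n\Z) = \varinjlim_U KC^{(0)}(\mathcal{X}_U, \Z/n\Z)$. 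At the same time, $\Sigma_U$ grows to $\Sigma_S$ as $U$ shrinks, so $\bigoplus_{v \in \Sigma_S} KC^{(1)}(X_{K_v}, \Z/n\Z) = \varinjlim_U \bigoplus_{v \in \Sigma_U} KC^{(1)}(X_{K_v}, \Z/n\Z)$. Because filtered colimits commute with shifts and with cokernels (equivalently with mapping cones), these identifications combine to give $KC^{(0)}(\mathcal{X}/S, \Z/n\Z) = \varinjlim_U KC^{(0)}(\mathcal{X}_U/U, \Z/n\Z)$.

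Homology commutes with filtered colimits, so $KH_a^{(0)}(\mathcal{X}/S, \Z/n\Z) = \varinjlim_U KH_a^{(0)}(\mathcal{X}_U/U, \Z/n\Z) = 0$ for all $a \geq 0$, in particular for $a > 0$ as claimed. The main technical point lies in the identification of the Kato complex of $\mathcal{X}$ with the directed colimit of those of the $\mathcal{X}_U$: one has to verify that the dimensions of points behave consistently under the open immersions so that the localization sequences are term-wise short exact without a dimension shift, and that the spreading-out step can be performed so that regularity, properness, flatness, smoothness of the generic fiber, and invertibility of $n$ all hold simultaneously on a common $U_0$.
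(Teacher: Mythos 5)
Your strategy---writing $\mathcal{S}$ as a cofiltered limit of opens $U \subset \Spec(\O_K)$ and reducing to Theorem~\ref{ThmKatoconj05} by a purely formal colimit argument---is genuinely different from the paper's proof. The paper instead runs the localization triangle for $\mathcal{X} \to \mathcal{S}$, uses Theorem~\ref{ThmKatoconj51} together with (\ref{localizationsequencekatohomology}) to identify $KH^{(0)}_a(X_v,\Z/n\Z)$ with $KH^{(1)}_a(X_{K_v},\Z/n\Z)$ at the finitely many closed points $v$ of $\mathcal{S}$, and thereby reduces the vanishing to the global Kato conjecture over $K$ (Theorem~\ref{ThmKatoconj04}), exactly as Kerz--Saito deduce Conjecture~\ref{Conj0.5}. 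Your spreading-out step and the termwise localization sequences for a single open immersion are fine, but the reduction breaks at the step you flag as routine: the identification $KC^{(0)}(\mathcal{X}/S,\Z/n\Z) = \varinjlim_U KC^{(0)}(\mathcal{X}_U/U,\Z/n\Z)$.

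The problem is that the directed system on the right does not exist in the naive sense. For $U' \subset U$ you have two separate systems---the quotient maps $KC^{(0)}(\mathcal{X}_U,\Z/n\Z) \r KC^{(0)}(\mathcal{X}_{U'},\Z/n\Z)$ on the sources and the summand inclusions $\bigoplus_{v\in \Sigma_U} KC^{(1)}(X_{K_v},\Z/n\Z) \inj \bigoplus_{v\in \Sigma_{U'}} KC^{(1)}(X_{K_v},\Z/n\Z)$ on the targets---but they are not compatible with the structure maps of Definition~\ref{definitionkatocomplexes}(3). For a place $v \in U \setminus U'$, the $v$-component of the structure map at level $U'$ is the restriction to the generic fibre followed by base change to $K_v$, which is nonzero, whereas at level $U$ the place $v$ simply does not occur in the target; the relevant square fails to commute, so no map of cones (or cokernels) is induced and there is nothing to take a colimit of. This is not a bookkeeping issue: comparing the relative complexes for $U$ and $U'$ requires trading the subcomplexes $KC^{(0)}(\mathcal{X}_v,\Z/n\Z)$ deleted from the source against the summands $KC^{(1)}(X_{K_v},\Z/n\Z)$ added to the target, and the only bridge between these is the local triangle preceding (\ref{localizationsequencekatohomology}) together with the vanishing $KH^{(0)}_a(\mathcal{X}_{\O_{K_v}},\Z/n\Z)=0$ of Theorem~\ref{ThmKatoconj51}---an input your argument never invokes. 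A further warning sign: if your argument were correct it would give vanishing for $a=0$ as well (a filtered colimit of zero groups is zero), while Conjecture~\ref{Conj0.6} is deliberately asserted only for $a>0$, reflecting that the degree-zero homology in the semilocal case is not expected to vanish. Once Theorem~\ref{ThmKatoconj51} is brought in to repair the comparison, the argument collapses into the d\'evissage the paper actually performs, so the limit formalism buys nothing.
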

\begin{proof}
The proof is analogous to the proof of Theorem \ref{ThmKatoconj05} in \textit{loc.~cit.}. In fact, 
$$KH_a^{(1)}(X,\Z/n\Z)\cong KH_a^{(0)}(\mathcal{X}/S,\Z/n\Z)$$
since by Theorem \ref{ThmKatoconj51} and (\ref{localizationsequencekatohomology}) there is an isomorphism 
$$KH_a^{(1)}(X_{K_v},\Z/n\Z)\xrightarrow{\delta} KH_a^{(0)}(X_v,\Z/n\Z)$$
for $v\in S$.
\end{proof}


\begin{proposition}\label{propKatoopen}
Let $K$ be a local or global field. Let $n$ be invertible in $K$. If $K$ is a number field assume furthermore that either $n$ is odd or that $K$ has no real places. Let $X$ be a smooth projective scheme over $\Spec(K)$. Let $D\subset X$ be an effective divisor on $X$ and $j:U\hookrightarrow X$ be the inclusion. Let
$$E_1^{p,q}(X,A)=\bigoplus_{x\in X^{(p)}}H_x^{p+q}(X,A)\Rightarrow H^{p+q}(X,A)$$
be  the coniveau spectral sequence (see \ref{Lemmaetaleconiceauspectralseq}).
Then
$$E_1^{\bullet,d+2}(j_!\mu_n^{\otimes d+1})(X)\cong E_1^{\bullet,d+2}(\mu_n^{\otimes d+1})(X).$$
In paritcular for $X$ proper and smooth over a number field the map
$$KH_a^{(1)}(X,j_!\mu_n^{\otimes d+1})\xrightarrow{\cong} \bigoplus_{v\in P_K} KH_a^{(1)}(X_{K_v},j_!\mu_n^{\otimes d+1})$$
is an isomorphism for $a>0$. Here $$KH_a^{(1)}(X,j_!\mu_n^{\otimes d+1}):=E_1^{\bullet,d+2}(j_!\mu_n^{\otimes d+1})(X)$$ and $$KH_a^{(1)}(X_{K_v},j_!\mu_n^{\otimes d+1}):=E_1^{\bullet,d+2}(j_!\mu_n^{\otimes d+1})(X_{K_v}).$$
In the latter case we let $j$ denote the inclusion $D_{K_v}\hookrightarrow X_{K_v}$.
\end{proposition}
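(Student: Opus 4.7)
My plan is to verify the asserted identification of $E_1$-pages termwise, and then derive the ``in particular'' statement from Kato's conjecture for the coefficients $\mu_n^{\otimes d+1}$ (which is the meaning of $\Z/n\Z$ inside the complex $KC^{(1)}$). Let $i:D\inj X$ denote the closed complement of $j$. The starting point is the tautological short exact sequence
\[
0 \r j_!\mu_n^{\otimes d+1} \r \mu_n^{\otimes d+1} \r i_*\mu_n^{\otimes d+1} \r 0
\]
of étale sheaves on $X$. Applying $\mathbf{R}\underline{\Gamma}_x$ at a point $x\in X^{(p)}$ yields a long exact sequence that is natural in $x$ and hence compatible with the Cousin differentials of the $E_1$-row. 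The desired identification thus reduces to showing that the natural map
\[
H_x^{p+d+2}(X,\, j_!\mu_n^{\otimes d+1}) \r H_x^{p+d+2}(X,\, \mu_n^{\otimes d+1})
\]
is an isomorphism for each $x\in X^{(p)}$.

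If $x\in U^{(p)}$, the sheaf $i_*\mu_n^{\otimes d+1}$ has zero stalks on an étale neighbourhood of $x$, so its local cohomology at $x$ vanishes in every degree and the isomorphism is immediate. If instead $x\in D^{(p-1)}$, the equality $\overline{\{x\}}^X=\overline{\{x\}}^D$ yields $H_x^m(X,i_*\mu_n^{\otimes d+1})\cong H_x^m(D,\mu_n^{\otimes d+1})$, and the long exact sequence reduces the claim to
\[
H_x^m(D,\mu_n^{\otimes d+1})=0 \qquad\text{for } m\in\{p+d+1,\,p+d+2\}.
\]
Since $\mathrm{trdeg}_K(k(x))=\dim D-(p-1)=d-p$, the hypotheses on $K$ and $n$ give $\mathrm{cd}_n(k(x))\leq d-p+2$. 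The semi-purity estimate for étale local cohomology on the strict henselization $\mathcal{O}^{sh}_{D,x}$ then produces
\[
H_x^m(D,\mu_n^{\otimes d+1})=0 \quad\text{for every }m>\mathrm{codim}_D(x)+\mathrm{cd}_n(k(x))=(p-1)+(d-p+2)=d+1.
\]
As $x\in D^{(p-1)}$ forces $p\geq 1$, both $m=p+d+1$ and $m=p+d+2$ strictly exceed $d+1$, which gives the required vanishing.

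For the ``in particular'' assertion, the iso of complexes produced above is functorial under base change $X\mapsto X_{K_v}$, because $j_!$ commutes with pullback along open immersions and the vanishing runs verbatim over each $K_v$ (which satisfies the same hypotheses as $K$). Under the resulting identifications $KH_a^{(1)}(-,\,j_!\mu_n^{\otimes d+1})\cong KH_a^{(1)}(-,\Z/n\Z)$ both globally and at each place, the claimed isomorphism for $j_!\mu_n^{\otimes d+1}$ reduces directly to the corresponding one for $\Z/n\Z$, which is Theorem \ref{ThmKatoconj04}.

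The principal obstacle is the semi-purity bound for local cohomology on the possibly singular divisor $D$: in the smooth case it is an immediate consequence of absolute purity on $D$, but in general one must argue via the strict henselization together with Artin-style cohomological dimension bounds (or, equivalently, a Jannsen--Saito-type semi-purity). Once this ingredient is in place, the remainder of the argument is formal compatibility of the coniveau spectral sequence with the defining short exact sequence of $j_!$.
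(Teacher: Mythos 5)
Your overall architecture is the right one (and is essentially what the cited Kerz--Zhao argument does): compare the two coniveau $E_1$-rows via the exact sequence $0 \r j_!\mu_n^{\otimes d+1}\r \mu_n^{\otimes d+1}\r i_*\mu_n^{\otimes d+1}\r 0$, reduce to the vanishing of $H^m_x(D,\mu_n^{\otimes d+1})$ for $m=p+d+1,\,p+d+2$ at points $x\in D^{(p-1)}$, and then feed the resulting identification of complexes into Theorem \ref{ThmKatoconj04}. However, the vanishing lemma you invoke is false as stated. The bound $H^m_x(Y,\mathcal{F})=0$ for $m>\mathrm{codim}_Y(x)+\mathrm{cd}_n(k(x))$ already fails for a regular $Y$: by absolute purity $H^m_x(Y,\mu_n^{\otimes j})\cong H^{m-2c}(k(x),\mu_n^{\otimes j-c})$ with $c=\mathrm{codim}_Y(x)$, which is nonzero in degree $m=2c+\mathrm{cd}_n(k(x))$ (e.g.\ a closed point of a smooth curve over a $p$-adic field gives a nonzero $H^4_x$, while your bound predicts vanishing above degree $3$). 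The correct threshold carries $2\,\mathrm{codim}$, not $\mathrm{codim}$, and for a possibly singular $D$ purity is not available at all, so the vanishing has to be established differently — for instance by writing $H^m_x(D,\mathcal{F})=H^m_x(\Spec\mathcal{O}_{D,x},\mathcal{F})$, bounding $\mathrm{cd}_n$ of the local scheme by $\dim D+\mathrm{cd}_n(K)=d+1$ via Artin vanishing for affines, and bounding $\mathrm{cd}_n$ of the punctured local scheme by covering it with the $p-1$ affines attached to a system of parameters. This yields $H^m_x(D,\mathcal{F})=0$ for $m>d+p$.

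Fortunately the two degrees you actually need, $m=p+d+1$ and $m=p+d+2$, both exceed $d+p$, so the conclusion survives once the lemma is corrected; but as written the key quantitative input is wrong and its justification ("semi-purity on the strict henselization") does not produce the bound you claim, especially not on the singular divisor $D$. The remainder of the argument — the excision argument at points of $U$, the identification $H^*_x(X,i_*\mathcal{F})\cong H^*_x(D,\mathcal{F})$, the compatibility with the differentials and with base change to the $K_v$, and the reduction of the ``in particular'' statement to Theorem \ref{ThmKatoconj04} after reindexing $X^{(p)}=X_{d-p}$ — is correct. Note that the paper itself gives no details here and simply defers to \cite[Prop. 2.2.1]{KeZh}, so your write-up is more explicit than the paper's, but the purity bookkeeping must be repaired before it is a proof.
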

\begin{proof}
The proof is identical to the proof of \cite[Prop. 2.2.1]{KeZh}.
\end{proof}

We now turn to finiteness results which can be deduced from the Kato conjectures and which we will need in the following sections.

\begin{lemma}\label{lemmaarchimedeanvanishing}
Let $X$ be of finite type over a Dedekind domain $U\subset \O_K$. Let $v$ be an infinite place of $K$ and $n$ odd. Then the complex
$$KC^{(1)}(X_{K_v},\Z/n\Z)$$
is zero.
\end{lemma}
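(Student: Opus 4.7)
The plan is to prove termwise vanishing of the complex: it suffices to show that $H^{a+2}(k(x),\mu_n^{\otimes(a+1)}) = 0$ for every $x \in X_a$ and every $a \geq 0$. Since $v$ is infinite and $K$ is a number field, $K_v$ is either $\R$ or $\C$, and $n$ is in particular invertible on $X_{K_v}$ so that $\Z/n(a+1) = \mu_n^{\otimes(a+1)}$. For a point $x \in X_a$, the residue field $k(x)$ is finitely generated over $K_v$ of transcendence degree $a$, so the task reduces to a cohomological dimension estimate.

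First I would handle the case $K_v = \C$. Here $k(x)$ is a finitely generated field of transcendence degree $a$ over an algebraically closed field of characteristic zero, so the classical bound $\mathrm{cd}(k(x)) \leq a$ (see e.g.\ Serre's \emph{Cohomologie Galoisienne}, II.4.2) immediately gives the vanishing of $H^{a+2}(k(x),\mu_n^{\otimes(a+1)})$, because $a+2 > a$.

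Next I would treat the case $K_v = \R$ by reducing to the previous case via the quadratic extension $L := k(x)(\sqrt{-1})$. The field $L$ contains $\C$ and has transcendence degree $a$ over $\C$, so again $\mathrm{cd}(L) \leq a$ and $H^{a+2}(L,\mu_n^{\otimes(a+1)}) = 0$. Since $[L:k(x)] = 2$ divides $2$ and $n$ is odd, the composition $\mathrm{cor} \circ \mathrm{res}$ is multiplication by $2$, hence an isomorphism on any $\Z/n$-module, so the restriction map
\[
H^{a+2}(k(x),\mu_n^{\otimes(a+1)}) \hookrightarrow H^{a+2}(L,\mu_n^{\otimes(a+1)})
\]
is injective. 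The right-hand side vanishes, and therefore so does the left-hand side.

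The argument has no real obstacle; the only place where oddness of $n$ enters is the restriction-corestriction trick that kills the real-place contribution, and this is precisely the hypothesis of the lemma. One should note, however, that the hypothesis $n$ odd cannot be dropped: for $n$ even and $k(x) = \R$, the group $H^{2}(\R,\mu_2) = \Z/2$ is non-zero, so the term at $a = 0$ would survive.
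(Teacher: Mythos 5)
Your proof is correct and takes essentially the same route as the paper, namely termwise vanishing of all groups $H^{a+2}(k(x),\mu_n^{\otimes(a+1)})$ for $n$ odd; the paper merely asserts this vanishing (citing the archimedean behaviour in the Tate--Poitou setup), while you supply the underlying cohomological-dimension bound over $\C$ and the restriction--corestriction reduction from $\R$.
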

\begin{proof}
This follows from \ref{TatePoitou}(1). In fact, all the groups $H^{a+1}(k(x),\Z/n(a))$ appearing in the complex $KC^{(1)}(X_{K_v},\Z/n\Z)$ are zero if $n$ is odd.
\end{proof}

\begin{lemma}\label{lemmafiniteness} Let $K_v$ be a non-archimedean local field with residue field $k_v$ of characteristic $p$.
Let $X_{K_v}$ be of finite type over $K_v$. Let $d=\mathrm{dim}X_{K_v}$. Then the following statements hold:
\begin{enumerate}
\item If $n$ is prime to $p$, then the groups $$KH^{(1)}_a(X_{K_v},\Z/n\Z)$$
are finite.
\item If $X_{K_v}$ satisfies condition $(\star)$, then the groups $$KH^{(1)}_a(X_{K_v},\Z/p^n\Z)$$
are finite for $a\leq 2$.
\item If $d = 2$ and $X_{K_v}$ satisfies condition $(\star\star)$, then the groups $$KH^{(1)}_a(X_{K_v},\Z/p^n\Z)$$
are finite for $a \leq 2$ (and hence for all $a$).
\end{enumerate}

\end{lemma}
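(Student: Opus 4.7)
My plan is to apply the localization sequence (\ref{localizationsequencekatohomology}) to a suitable model of $X_{K_v}$ over $\O_{K_v}$, and combine it with the Kato vanishing theorems \ref{ThmKatoconj51} and \ref{ThmKatoconj03}.

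For part (1), one may first reduce to the projective case by compactifying $X_{K_v}$ and exploiting the long exact sequence for $KC^{(1)}$ associated to an open--closed decomposition. Pick a proper flat model $\mathcal{X}$ of $X_{K_v}$ over $\O_{K_v}$; using de Jong's alterations together with a trace argument on $\Z/n\Z$-coefficients, we may further assume $\mathcal{X}$ is regular. Since $n$ is prime to the residue characteristic $p$, it is invertible on $\mathcal{X}$, so Theorem \ref{ThmKatoconj51} gives $KH^{(0)}_\ast(\mathcal{X},\Z/n\Z)=0$, and the localization sequence (\ref{localizationsequencekatohomology}) collapses to isomorphisms
\[
  KH^{(1)}_a(X_{K_v},\Z/n\Z)\;\cong\;KH^{(0)}_a(X_v,\Z/n\Z),
\]
where $X_v$ denotes the special fibre of $\mathcal{X}$. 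Finiteness of the right-hand side, for a proper scheme over the finite field $k_v$ with $\ell$-adic coefficients, follows from Theorem \ref{ThmKatoconj03} applied to a smooth alteration, together with induction on $\dim X_v$ and the classical finiteness of \'etale cohomology of varieties over finite fields.

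For (2), the Kato vanishing results are no longer available since $n = p^r$; instead, condition $(\star)$ supplies a projective flat model $\mathcal{X}$ over $\O_{K_v}$ whose special fibre is smooth outside finitely many ordinary quadratic singularities. Replacing the sheaves $\mu_n^{\otimes\bullet}$ by Sato's $p$-adic \'etale Tate twists $\mathcal{T}_r(\bullet)$, one identifies $KC^{(1)}(X_{K_v},\Z/p^r\Z)$ in low degrees with an $E_1$-row of the niveau spectral sequence computing $H^\ast_\et(\mathcal{X},\mathcal{T}_r(\bullet))$. The singularity type in $(\star)$ allows one to compute the local contributions at the finitely many isolated singular points explicitly; the resulting cohomology of $\mathcal{X}$ is finite in the degrees relevant for $a \le 2$, and feeding this back into the localization sequence yields the claim.

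For (3), condition $(\star\star)$ supplies a strictly semistable model, for which Sato's finiteness results for $\mathcal{T}_r(\bullet)$-cohomology apply directly; the argument then runs parallel to (2). The dimension hypothesis $d=2$ truncates $KC^{(1)}(X_{K_v},\Z/p^r\Z)$ to cohomological degrees $\le 2$, so finiteness in that range automatically extends to all $a$. The main obstacle is clearly (2) and (3): without the Kato vanishing theorems one must extract finiteness from the $p$-adic \'etale Tate-twist machinery under the specific reduction hypotheses, and the local analysis around the quadratic singularities in $(\star)$, together with the precise spectral-sequence identification, is the most delicate ingredient.
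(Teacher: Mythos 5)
Your part (1) is essentially the paper's argument: reduce via the localization sequence (\ref{localizationsequencekatohomology}) to the Kato homology of a model and its special fibre and invoke Theorems \ref{ThmKatoconj51} and \ref{ThmKatoconj03}, with the standard d\'evissage (compactification, alterations, trace) to handle general $X_{K_v}$. That part is fine.

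Parts (2) and (3) are where the real content lies, and your proposed route has a genuine gap. For (2), the plan of identifying $KC^{(1)}(X_{K_v},\Z/p^r\Z)$ with an $E_1$-row of the niveau spectral sequence for Sato's $p$-adic \'etale Tate twists on a model and then ``computing the local contributions at the quadratic singularities'' is exactly the approach the authors explain cannot currently be carried out: as noted in Remark \ref{remarkkatoconj51}, the homology theory built from $Rf^!\Z/p^r(1)_S$ lacks the base change and Artin vanishing needed to run the Kerz--Saito method, and no local analysis of ordinary quadratic singularities is known to substitute for it; moreover the ``feeding back into the localization sequence'' step would require the $p$-primary case of Conjecture \ref{Conj5.1}, which is open. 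Condition $(\star)$ is not the input to a local computation --- it is the hypothesis of Jannsen--Saito's class field theory over local fields, \cite[Thm.~6]{JS12}, which asserts that the cycle class map $\CH^{d+1}(X_{K_v},1,\Z/p^n\Z)\r H^{2d+1}_{\et}(X_{K_v},\Z/p^n\Z(d+1))$ is an isomorphism. The actual proof inserts this isomorphism into the exact sequence of Lemma \ref{lemmauzun}(1), which traps $KH^{(1)}_a(X_{K_v},\Z/p^n\Z)$ for $a\leq 2$ between finite \'etale cohomology groups of the proper variety $X_{K_v}$. For (3) the paper likewise does not apply Sato's finiteness ``directly'': it uses the coefficient sequence of \cite[Lemma~7.6]{KeS12} to pass between $\Z/p^n\Z$- and $\Q_p/\Z_p$-coefficients, and then the Jannsen--Saito isomorphisms $KH^{(1)}_a(X_{K_v},\Q_p/\Z_p)\cong KH^{(0)}_a(X_v,\Q_p/\Z_p)\cong H_a(\Gamma_{X_v},\Q_p/\Z_p)$ from \cite[Thms.~1.4, 1.6]{JS03}, where $\Gamma_{X_v}$ is the finite configuration complex of the strictly semistable special fibre; finiteness then comes from the finiteness of that simplicial complex. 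Your closing observation that $d=2$ truncates the complex in degrees $\leq 2$ is correct, but without these two external theorems the finiteness in degrees $\leq 2$ is not established.
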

\begin{proof}
(1) This follows from the exact sequence (\ref{localizationsequencekatohomology}) and Theorems \ref{ThmKatoconj03} and \ref{ThmKatoconj51}.

(2) By Lemma \ref{lemmauzun} there is an exact sequence 
\begin{equation*}
\begin{split}
...\r H_{\text{\'et}}^{2d}(X_{K_v},\Z/p^n\Z(d+1))\r  KH^{(1)}_{2}(X_{K_v},\Z/p^n\Z)\r \CH^{d+1}(X_{K_v},1,\Z/p^n\Z)\\
\xrightarrow{\cong} H_{\text{\'et}}^{2d+1}(X_{K_v},\Z/p^n\Z(d+1)) \r KH^{(1)}_{1}(X_{K_v},\Z/p^n\Z)\r \CH^{d+1}(X_{K_v},0,\Z/p^n\Z)=0
\end{split}
\end{equation*}
The isomorphism is shown in \cite[Thm. 6]{JS12}. The finiteness of the \'etale cohomology groups therefore implies the statement.

(3) Fix a strictly semistable model of $X_{K_v}$ and denote the configuration complex of the special fiber $X_v$ of $X_{K_v}$ by $\Gamma_{X_v}$. Together with the Bloch-Kato conjecture,  \cite[Lemma 7.6]{KeS12} gives a short exact sequence
$$0 \to KH_{a+1}^{(1)}(X_{K_v},\Q_p /\Z_p) / p^n \to KH^{(1)}_a(X_{K_v}, \Z/p^n\Z) \to KH^{(1)}_a(X_{K_v}, \Q_p /\Z_p)[p^n] \to 0$$
where the left group vanishes for $a \geq 2$ by dimension reasons. Furthermore \cite[Thm. 1.4]{JS03} and \cite[Thm. 1.6]{JS03} yield isomorphisms $$KH^{(1)}_a(X_{K_v}, \Q_p /\Z_p) \cong KH^{(0)}_a(X_{v}, \Q_p /\Z_p) \cong H_{a}(\Gamma_{X_v}, \Q_p /\Z_p) $$ which imply the finiteness results as $\Gamma_{X_v}$ is a finite simplicial complex.
\end{proof}

\begin{remark}\label{remarkkatoconj51}
For a regular and projective scheme $X$ over a finite field of characteristic $p$ it is shown in \cite{JS} that the Kato conjecture holds with $\Z/p^r$-coefficients for $a\leq 4$, i.e.
$$KH_a(X,\Z/p^r) = \left\{
\begin{array}{ll}
\Z/p^r & \mathrm{for }\quad a=0 \\
0 & \, \mathrm{otherwise}. \\
\end{array}
\right. $$
In \cite[Sec. C]{JS03}, Jannsen and Saito define a suitable homology theory with $\Z/p^r$-coefficients for a scheme $X$ over a discrete valuation ring $A$ using $p$-adic \'etale Tate twists: 
$$KH_a(X/S,\Z/p^r(-1)):=H^{-a}(X_{et}, Rf^!\Z/p^r(1)_S)$$
with $S=\Spec(A)$ and
$$\Z/p^r(1)_S:= \mathrm{cone}(Rj_*(\Z/p^r(1)_\eta)\r i_*(\Z/p^r)_s[-1])[-1].$$
Unfortunately this theory cannot be used as in the approach of \cite[Sec. 3 (3.11)]{KeS12} to show a Lefschetz theorem implying that $KH_a(X/S,\Z/p^r)=0$ for $a\leq 4$ since there is no appropriate base change and and Artin vanishing for $p$-adic \'etale Tate twists. We are therefore obliged to use Lemma \ref{lemmafiniteness}(2) which is implied by class field theory.
\end{remark}

\section{Main theorem: a local to global theorem}\label{secmainthm}
In this section we prove our our main Theorem \ref{maintheorem2}.
The central method of this article is the comparison of the Zariski and the \'etale motivic cohomology of a regular scheme $X$ over a field or Dedekind domain by analysing the respective coniveau spectral sequences. The difference may in some cases be measured by the Kato conjectures.

\begin{lemma}\label{Lemmaetaleconiceauspectralseq}
Let $X$ be a regular irreducible scheme. Let $A$ be a locally constant constructible sheaf and the stalks be $n$-torsion invertible on $X$. Then the coniveau spectral sequence for \'etale cohomology 
$$E_1^{p,q}(X,A)=\bigoplus_{x\in X^{(p)}}H_x^{p+q}(X,A)\Rightarrow H^{p+q}(X,A)$$
converges and 
$$\bigoplus_{x\in X^{(p)}}H_x^{p+q}(X,A)\cong \bigoplus_{x\in X^{(p)}}H^{q-p}(x,A(-p)).$$ 
\end{lemma}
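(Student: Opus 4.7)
The proof plan is as follows. The coniveau spectral sequence arises from filtering $X$ by codimension of support: setting $F^p H^n(X,A) = \sum_{\mathrm{codim}\,Z \geq p} \mathrm{im}(H^n_Z(X,A) \to H^n(X,A))$, the associated exact couple produces a spectral sequence whose $E_1$ term can be written, by the standard Cousin/dévissage argument on Noetherian filtered colimits, as
\[
E_1^{p,q} = \bigoplus_{x \in X^{(p)}} H^{p+q}_x(X,A), \qquad H^n_x(X,A) := \varinjlim_{U \ni x} H^n_{\overline{\{x\}} \cap U}(U,A).
\]
Convergence (in the strong sense) is immediate once $X$ has finite Krull dimension, since the filtration is then finite.

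The substantive step is to identify the summands on the right. I would invoke absolute cohomological purity, due to Gabber (as exposed by Fujiwara): for a closed immersion $i : Z \hookrightarrow V$ of pure codimension $p$ between regular schemes and $A$ a locally constant constructible $n$-torsion sheaf on $V$ with $n$ invertible, one has $Ri^! A \cong i^\ast A(-p)[-2p]$, and consequently $H^{n}_Z(V, A) \cong H^{n-2p}(Z, A|_Z(-p))$. To apply this at a fixed $x \in X^{(p)}$, I observe that $\overline{\{x\}}$ is integral with generic point $\Spec k(x)$, hence generically regular; after shrinking $U$ around $x$ we may therefore assume $Z_U := \overline{\{x\}} \cap U$ is regular, so that purity applies to the pair $(U, Z_U)$.

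Passing to the colimit over these shrinking neighborhoods, the limit of the $U$'s is $\Spec \mathcal{O}_{X,x}$ and the limit of the $Z_U$'s is $\Spec k(x)$; by continuity of \'etale cohomology along filtered limits of quasi-compact quasi-separated schemes with affine transition maps,
\[
H^{p+q}_x(X,A) \;\cong\; \varinjlim_U H^{q-p}(Z_U, A(-p)) \;\cong\; H^{q-p}(\Spec k(x), A(-p)),
\]
which is the claimed formula. The only real obstacle is invoking absolute purity for pairs of possibly non-smooth regular schemes; once that deep input is available, the rest is formal. (Under stronger hypotheses on $X$ — e.g. smooth over a base — one could replace Gabber's theorem by the classical relative purity of SGA~4, but the generality stated here genuinely requires the absolute version.)
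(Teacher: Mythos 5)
Your proposal is correct and follows the same route as the paper, which simply cites Colliot-Th\'el\`ene--Hoobler--Kahn for the existence and convergence of the coniveau spectral sequence and Gabber's absolute purity theorem (in Fujiwara's exposition) for the identification of the $E_1$-terms; you have essentially unwound those two citations. The only cosmetic difference is that you spread out regularity of $\overline{\{x\}}$ over a small $U$ before applying purity, whereas one can equivalently pass to the limit first and apply purity directly to the regular pair $\bigl(\Spec\mathcal{O}_{X,x},\,\Spec k(x)\bigr)$, which avoids any appeal to openness of the regular locus.
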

\begin{proof}
The existence of the spectral sequence is shown in this generality, in fact only assuming that $X$ is equidimensional and noetherian, in \cite{CHK97}. The second statement follows from Gabber's absolute purity theorem (see \cite{Fuj02}) since $X$ is regular.
\end{proof}

\begin{lemma}(\cite[Prop. 2.1]{Ge10})\label{lemmaconiveauchow}
Let $X$ be essentially of finite type over a Dedekind domain $S$. Then the spectral sequence 
$$ ^\CH E_1^{p,q}(X,r)=\bigoplus_{x\in X^{(p)}}\CH^{r-p}(x,-p-q)\Rightarrow \CH^{r}(X,-p-q)$$
converges.
\end{lemma}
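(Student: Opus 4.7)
The plan is to build the spectral sequence from the coniveau filtration on Bloch's cycle complex $z^r(X,\bullet)$, with the essential input being Levine's localization theorem \cite{Le01} for higher Chow groups over a Dedekind base (which already underlies the earlier definition of higher Chow groups of $\mathcal{X}/\mathcal{S}$ as hypercohomology in the paper).

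First I would filter $z^r(X,\bullet)$ by codimension of support: let $F^p z^r(X,\bullet)$ be the subcomplex generated by admissible cycles whose support has codimension $\geq p$ on $X$. Because $X$ is essentially of finite type over a one-dimensional base it has finite Krull dimension, so this filtration is bounded ($F^p = 0$ for $p > \dim X$) and in each simplicial degree only finitely many graded pieces are nontrivial. The induced spectral sequence will therefore converge automatically, so convergence is not the real content.

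The main step is the identification of the $E_1$-page. Writing $z^r_{Z}(X,\bullet)$ for the subcomplex of cycles on $X$ with support in a closed $Z \subset X$, Levine's localization theorem gives, for $U = X \setminus Z$, a distinguished triangle
\[
z^r_{Z}(X,\bullet) \to z^r(X,\bullet) \to z^r(U,\bullet) \to z^r_{Z}(X,\bullet)[1].
\]
Taking the filtered colimit over closed $Z$ of codimension $\geq p$, and using that filtered colimits preserve exact triangles, exhibits $F^p z^r(X,\bullet) = \colim_Z z^r_Z(X,\bullet)$. One then peels off codimension inductively: the quotient $F^p/F^{p+1}$ is the colimit over codimension-$p$ closed subschemes of $z^r_Z(X,\bullet)$ modulo cycles supported in codimension $\geq p+1$, and a standard generic-point argument (homotopy invariance/continuity applied to the essentially smooth morphisms $\Spec k(x) \to X$ for $x \in X^{(p)}$) gives
\[
F^p z^r(X,\bullet) / F^{p+1} z^r(X,\bullet) \simeq \bigoplus_{x \in X^{(p)}} z^{r-p}(k(x),\bullet).
\]
Taking homology yields $^\CH E_1^{p,q}(X,r) = \bigoplus_{x \in X^{(p)}} \CH^{r-p}(k(x),-p-q)$ abutting to $\CH^r(X,-p-q)$.

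The main obstacle is the careful verification of this last identification of the associated graded: one must invoke Levine's localization theorem in the \emph{essentially of finite type} context and ensure that the colimit over closed subschemes $Z$ of codimension $\geq p$ together with the Gersten-type filtration by singular locus really strips off the codimension to yield the Bloch complex of the residue field with the correct shift in the cycle-dimension index $r-p$. Once this identification is in place the remainder is purely the formalism of spectral sequences attached to a filtered complex.
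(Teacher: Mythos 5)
Your construction is correct and is essentially the one the paper delegates to \cite[Prop. 2.1]{Ge10}: the paper gives no argument beyond that citation, and Geisser's proof is likewise the formal spectral sequence of the coniveau filtration (equivalently, the exact couple assembled from the localization sequences), with Levine's localization theorem over a one-dimensional regular base as the only substantive input --- exactly the input you identify. Two small points of hygiene. First, the identification $F^p/F^{p+1}\simeq \bigoplus_{x\in X^{(p)}} z^{r-p}(k(x),\bullet)$ is cleanest when carried out with dimension indexing ($X_{(p)}$, $\CH_s$) and converted to codimension at the end, since cycles supported on a closed $Z\subset X$ are literally admissible cycles on $Z$ only when one keeps track of dimension of support; over a Dedekind base $X$ is catenary, so the translation to $X^{(p)}$ and $\CH^{r-p}$ is harmless but should be said. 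Second, the passage to residue fields is not an application of homotopy invariance to ``essentially smooth morphisms $\Spec k(x)\to X$'' (these maps are not essentially smooth); what is actually used is continuity of the cycle complex along the filtered system of open dense subschemes of $\overline{\{x\}}$, i.e. $z^{r-p}(k(x),\bullet)=\colim_{V\subset \overline{\{x\}}} z^{r-p}(V,\bullet)$, combined with the localization triangles for the complements. With those adjustments the argument is the standard one and the convergence, as you say, is automatic from the finiteness of the filtration.
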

\begin{proof}
See \cite[Prop. 2.1]{Ge10}.
\end{proof}

We will need the following general lemma which was originally observed by Jannsen and Saito for schemes over finite fields in \cite{JS}.
\begin{lemma}\label{lemmauzun}
Let $\mathcal{X}$ be a regular scheme of relative dimension $d$ over the spectrum of a field $\mathcal{S}=\Spec(K)$ or a Dedekind scheme $\mathcal{S}=\Spec(\O)$. Let $n$ be invertible on $\mathcal{S}$. For a field $k$ let $cd_n(k)$ denote the $\Z/n\Z$-cohomological dimension of $k$. Then the following statements hold:
\begin{enumerate}
\item (\cite[Thm. 8]{Uz16}) Let $\mathcal{S}=\Spec(K)$ be a field and $X:=\mathcal{X}$. Assume that $c=cd_n(K)-1$. Then the sequence
\begin{equation}
\begin{split}
...\r KH^{(c)}_{q-c+2}(X,\Z/n\Z)\r \CH^{d+c}(X,q,\Z/n\Z)\r H_{\et}^{2(d+c)-q}(X,\Z/n\Z(d+c))\\
\r KH^{(c)}_{q-c+1}(X,\Z/n\Z)\r \CH^{d+c}(X,q-1,\Z/n\Z)\r...
\end{split}
\end{equation}
is  exact.
\item Let $\mathcal{S}=\Spec(\O)$ be of dimension $1$.  Assume that $c=cd_n(K)-2$ and $c=cd_n(k(x))-1$ for $x\in \mathcal{S}_{(0)}$. Then the sequence
\begin{equation}
\begin{split}
...\r KH^{(c)}_{q-c+2}(\mathcal{X},\Z/n\Z)\r \CH^{d+1+c}(\mathcal{X},q,\Z/n\Z)\r H_{\et}^{2(d+1+c)-q}(\mathcal{X},\Z/n\Z(d+1+c))\\
\r KH^{(c)}_{q-c+1}(\mathcal{X},\Z/n\Z)\r \CH^{d+1+c}(\mathcal{X},q-1,\Z/n\Z)\r...
\end{split}
\end{equation}
is  exact.

\end{enumerate}
\end{lemma}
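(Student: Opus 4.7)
The strategy is to realise the Kato complex as the coniveau-level cofiber between Bloch's Zariski cycle complex and the étale motivic complex, and then extract the stated exact sequence from the hypercohomology long exact sequence of the corresponding triangle. Let $\epsilon \colon \mathcal{X}_\et \to \mathcal{X}_\Zar$ be the change-of-site morphism, and consider
\begin{equation*}
\Z/n(r)_\Zar \to R\epsilon_* \mu_n^{\otimes r} \to \mathcal{C}(r) \to [1]
\end{equation*}
on $\mathcal{X}_\Zar$, where $\Z/n(r)_\Zar$ denotes Bloch's cycle complex modulo $n$, and where $r = d + c$ in case (1), $r = d + 1 + c$ in case (2). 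By the Bloch--Kato theorem (equivalently the Beilinson--Lichtenbaum property, valid on the regular scheme $\mathcal{X}$ since $n$ is invertible on $\mathcal{S}$), the cofiber $\mathcal{C}(r)$ has stalk at $x \in \mathcal{X}^{(p)}$ concentrated in cohomological degrees $\geq r - p + 1$.

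First I would attach the coniveau (Gersten-type) spectral sequences to each vertex of the triangle; for $\Z/n(r)_\Zar$ and $R\epsilon_* \mu_n^{\otimes r}$ these are provided by Lemmas \ref{lemmaconiveauchow} and \ref{Lemmaetaleconiceauspectralseq}, and the one for $\mathcal{C}(r)$ follows from the same formalism applied to the filtration by codimension of support; the three fit into an exact triangle of spectral sequences. Next I would use the cohomological-dimension hypotheses to show that the spectral sequence of $\mathcal{C}(r)$ is concentrated in a single row. In case (2) this is the key calculation: for every $x \in \mathcal{X}^{(p)}$ a transcendence-degree count, combined with $cd_n(K) = c + 2$ and $cd_n(k(s)) = c + 1$ for $s \in \mathcal{S}_{(0)}$, yields $cd_n(k(x)) \leq d - p + c + 2 = r - p + 1$, irrespective of whether $x$ dominates the generic or a closed point of $\mathcal{S}$. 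Together with the Beilinson--Lichtenbaum concentration $\geq r - p + 1$, this traps the stalk of $\mathcal{C}(r)$ at $x$ into the single degree $r - p + 1$, where it equals $H^{r - p + 1}_\et(k(x), \mu_n^{\otimes r - p})$.

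Setting $a = (d + 1) - p$ in case (2) (resp.\ $a = d - p$ in case (1)), the sole surviving row $q = r + 1$ then reads
\begin{equation*}
\bigoplus_{x \in \mathcal{X}_a} H^{c + a + 1}(k(x), \Z/n(c + a)),
\end{equation*}
with the $d_1$-differential matching Kato's boundary by the comparison of \cite{JSS}; this is precisely the Kato complex $KC^{(c)}(\mathcal{X}, \Z/n\Z)$. Consequently the cone spectral sequence degenerates, and a short index chase identifies $\mathbb{H}^{2r - q}(\mathcal{X}, \mathcal{C}(r)) \cong KH^{(c)}_{q - c + 1}(\mathcal{X}, \Z/n\Z)$. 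Feeding this into the long exact hypercohomology sequence of the defining triangle, and using $\mathbb{H}^{2r - q}(\mathcal{X}, \Z/n(r)_\Zar) = \CH^{r}(\mathcal{X}, q, \Z/n\Z)$ and $\mathbb{H}^{2r - q}(\mathcal{X}, R\epsilon_* \mu_n^{\otimes r}) = H^{2r - q}_\et(\mathcal{X}, \Z/n(r))$, produces the asserted long exact sequence.

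The hard part will be ensuring that the cone spectral sequence genuinely has only one nonzero row, which is where the sharp bounds on cohomological dimension are indispensable; in case (2) the interplay between generic- and special-fibre residue fields must be handled carefully so that both sides of $\mathcal{X} \to \mathcal{S}$ yield the same tight bound. Once this single-row concentration is established, the remainder of the argument parallels Uzun's proof of case (1) and reduces to routine bookkeeping of indices.
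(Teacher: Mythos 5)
Your proposal is correct and follows essentially the route the paper itself relies on: part (1) is delegated to \cite[Thm. 8]{Uz16}, whose proof is exactly this comparison of the coniveau spectral sequences for Bloch's cycle complex and for $R\epsilon_*\mu_n^{\otimes r}$ (cf. Lemmas \ref{lemmaconiveauchow} and \ref{Lemmaetaleconiceauspectralseq} and Figure \ref{figure1}), with the cone concentrated in the single extra row identified with the Kato complex via \cite{JSS}; your transcendence-degree bookkeeping in case (2), showing that points over the generic and over the closed points of $\mathcal{S}$ yield the same bound $cd_n(k(x))\leq r-p+1$, is precisely the point the paper leaves implicit and it checks out, as do all the index identifications. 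The only cosmetic caveat is that "stalk at $x$" should be read as the graded piece of the coniveau filtration (local cohomology with supports, identified with residue-field cohomology by absolute purity and localization), which is how your $E_1$-page computation in fact proceeds.
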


\begin{remark}
The cohomological dimension of a local field is $2$ (see e.g. \cite[Exp. 10, Thm. 2.1]{SGA4}).
The cohomological dimension of a global field $k$ is $2$ in the following cases:
(a) $k$ is a number field and $n$ is odd. (b) $k$ is a number field and does not have any real places. (c) $k$ is a function field of one variable over a finite field and $n$ is prime to $\mathrm{ch}(k)$. This follows from Theorem \ref{TatePoitou}(1). In all of these cases $c=1$.
\end{remark}

We can determine the higher Chow groups of schemes over local fields appearing in the local to global principle of Theorem \ref{maintheorem2} (see also \cite[Sec. 5.2]{Ge10}).

\begin{proposition}
Let $K_v$ be a non-archimedean local field with residue field $k_v$ of characteristic $p$.
Let $X_{K_v}$ be a smooth scheme over $K_v$. Let $d=\mathrm{dim}X_{K_v}$. Let $(n,p)=1$. Then the groups $\CH^{d+1}(X,q,\Z/n\Z)$ are finite for all $q$ and the groups $\CH^{d+1}(X,q,\Z/p^r\Z)$ are finite for $1\geq q$.
\end{proposition}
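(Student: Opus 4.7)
The plan is to derive finiteness of $\CH^{d+1}(X_{K_v},q,\Z/n\Z)$ from finiteness of Kato homology groups and of \'etale cohomology groups via the comparison exact sequence of Lemma~\ref{lemmauzun}(1).

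First I would apply Lemma~\ref{lemmauzun}(1) with $c = 1$. This is valid since $K_v$ is a non-archimedean local field, so $cd_n(K_v) = 2$ whenever $(n,p) = 1$, and $cd_p(K_v) = 2$ by local Tate duality. In either case the lemma yields the four-term exact sequence
\[
KH^{(1)}_{q+1}(X_{K_v},\Z/n\Z) \to \CH^{d+1}(X_{K_v},q,\Z/n\Z) \to H^{2d+2-q}_{\et}(X_{K_v},\Z/n\Z(d+1)) \to KH^{(1)}_q(X_{K_v},\Z/n\Z),
\]
so finiteness of the middle group reduces to finiteness of the two flanking Kato homology terms together with the \'etale cohomology group.

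In the prime-to-$p$ case, Lemma~\ref{lemmafiniteness}(1) gives finiteness of the two Kato homology terms in every degree, while the \'etale cohomology group is finite by the standard finiteness theorem for smooth projective varieties over a local field with prime-to-residue-characteristic torsion coefficients (proper base change to $\Spec \O_{K_v}^{sh}$, plus finiteness of the Galois cohomology of $k_v$). A diagram chase then yields finiteness of $\CH^{d+1}(X_{K_v},q,\Z/n\Z)$ for every $q$. In the $p$-primary case, Lemma~\ref{lemmafiniteness}(2) provides finiteness of $KH^{(1)}_a(X_{K_v},\Z/p^r\Z)$ only for $a \leq 2$, and only under condition $(\star)$ (which is inherited from the ambient setup). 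The restriction $q \leq 1$ in the statement is exactly what is needed so that both indices $q$ and $q+1$ fall in this range; combined with the finiteness of $p$-adic \'etale cohomology of smooth projective varieties over local fields in the relevant degrees, the same exact sequence yields the conclusion.

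The main obstacle is not any single technical step but the careful coordination of hypotheses: one has to line up the cohomological dimension input required by Lemma~\ref{lemmauzun}(1), the degree range in Lemma~\ref{lemmafiniteness}(2) (which forces the restriction to $q \leq 1$ in the $p$-primary case), and the appropriate \'etale cohomology finiteness statement in each torsion regime. Once this setup is in place the argument is a short diagram chase.
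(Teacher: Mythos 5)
Your argument is essentially the paper's: the paper likewise places $\CH^{d+1}(X_{K_v},q,\Z/n\Z)$ in the exact sequence of Lemma~\ref{lemmauzun}(1) between Kato homology and \'etale cohomology, and identifies $KH^{(1)}_a(X_{K_v},\Z/n\Z)$ with the Kato homology of the special fiber via Theorem~\ref{ThmKatoconj51} and the localization sequence (\ref{localizationsequencekatohomology}) --- which is exactly the content of Lemma~\ref{lemmafiniteness}(1) that you cite. Your remark that the $p$-primary case relies on condition $(\star)$ (through Lemma~\ref{lemmafiniteness}(2), i.e.\ the Jannsen--Saito isomorphism) is accurate and flags a hypothesis that the proposition's statement leaves implicit, since the paper's own displayed proof only treats the prime-to-$p$ coefficients.
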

\begin{proof}
Consider the commutative diagram

$$\begin{xy} 
  \xymatrix@-1.6em{
KH^{(c)}_{q+1}(X_{K_v},\Z/n\Z) \ar[r] \ar[d]^{\cong} & \CH^{d+1}(X_{K_v},q,\Z/n\Z) \ar[r] \ar[d]^{\delta} & H_{\et}^{2d-q+2}(X_{K_v},\Z/n\Z(d+1)) \ar[r] \ar[d]^{\delta} &  KH^{(1)}_{q}(X_{K_v},\Z/n\Z) \ar[d]^{\cong}  \\
KH^{(c)}_{q+1}(X_v,\Z/n\Z)   \ar[r] & \CH^{d}(X_v,q-1,\Z/n\Z)  \ar[r] & H_{\et}^{2d-q}(X_v,\Z/n\Z(d)) \ar[r] & KH^{(c)}_{q}(X_v,\Z/n\Z)
    }
\end{xy} $$
By Lemma \ref{lemmauzun}(1) the rows are exact. The vertical isomorphisms on the left and right of the diagram follow from Theorem \ref{ThmKatoconj51} and the exact sequence (\ref{localizationsequencekatohomology}).
\end{proof}

\begin{figure}
$$\begin{xy} 
  \xymatrix@-1.8em{
 d+2 & H^{d+2}(k(\eta),\Lambda(d+1))\ar[r]^{}   & \bigoplus_{x\in X^{(1)}}H^{d+1}(k(x),\Lambda(d))  \ar[r]^{} & ... \ar[r]^{} & \bigoplus_{x\in X^{(d)}}H^{2}(k(x),\Lambda(1))    \ar[r]^{}  & \\
 d+1 & H^{d+1}(k(\eta),\Lambda(d+1))\ar[r]^{} & \bigoplus_{x\in X^{(1)}}H^{d}(k(x),\Lambda(d))  \ar[r]^{} & ... \ar[r]^{} &  \bigoplus_{x\in X^{(d)}}H^{1}(k(x),\Lambda(1))    \ar[r]^{}  & \\
 ... & & ... &        & &       \\
   1 & & ...&  & & \\
  0 &  &         &   &     \\
   & 0 & 1 & ... & d& 
  }
\end{xy} $$ 
\caption{Table of $E^1_{p,q}(X,\Lambda(j))$ for $X/S$ of relative dimension $d$, $j=d+1$ and $\Lambda:=\Z/n\Z$.}
\label{figure1}
\end{figure}

\begin{theorem}\label{maintheorem2intext}
Let $K$ be a global field, $S$ a set of places of $K$ and $n \in \N_{>1}$. Denote by $\O_S$ the ring of elements in $K$ which are integers at all primes $\mathfrak{p}\notin S$ and let $\mathcal{S}=\Spec \O_S$.
Suppose that
\begin{enumerate}
	\item[(a)] $\mathcal{S}$ is either semi-local or an open of $\Spec(\O_K)$.
	\item[(b)] $n$ is invertible on $\mathcal{S}$.
	\item[(c)] If $K$ is a number field, $S$ contains all archimedian places of $K$ and either $n$ is odd or $K$ has no real places. 
\end{enumerate} 
Let $\mathcal{X}$ be regular, flat and projective of relative dimension $d$ over $\mathcal{S}$ with smooth generic fiber $X$. 
Then the following statements hold:
\begin{enumerate}
\item There is an exact sequence 
$$\CH^{d+1}(\mathcal{X},a,\Z/n\Z)\r  \resprod_{v\in S}\CH^{d+1}(X_{K_v},a,\Z/n\Z) \r H^{a}_{\et}(\mathcal{X}_{},\Z/n\Z)^\vee$$
\item For all $a$ there is a natural surjection
$\Sha(\CH^{d+1}(\mathcal{X},a,\Z/n\Z))\r \Sha^{2d+2-a,d+1}_\et(\mathcal{X}).$
\item If $K$ is a number field and condition ($\star$) holds for $X_{K_v}$ if $v$ divides $n$, then the group $\Sha(\CH^{d+1}(\mathcal{X},1,\Z/n\Z))$ is finite.
\item If $K$ is a function field of one variable over a finite field and $n$ is invertible in $K$, then $\Sha(\CH^{d+1}(\mathcal{X},a,\Z/n\Z))$ is finite for arbitrary $a$.
\end{enumerate}
\end{theorem}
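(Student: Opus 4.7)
The plan is to compare the sought exact sequence of Chow groups to the known exact sequence of étale cohomology coming from Saito's generalized Tate--Poitou sequence (\ref{SaitoTatePoitou}) via cycle class maps, and to use the Kato conjectures to show that the cycle class maps are close enough to isomorphisms to transport exactness across. Concretely, I would start with the diagram
\begin{equation*}
\begin{tikzcd}[column sep=small]
\CH^{d+1}(\mathcal{X},a,\Z/n\Z) \arrow[r]\arrow[d] & \resprod_{v\in S}\CH^{d+1}(X_{K_v},a,\Z/n\Z) \arrow[r]\arrow[d] & H^{a}_{\et}(\mathcal{X},\Z/n\Z)^{\vee} \arrow[d,equal] \\
H^{2d+2-a}_{\et}(\mathcal{X},\mu_n^{\otimes d+1}) \arrow[r] & \resprod_{v\in S}\tilde{H}^{2d+2-a}_{\et}(X_{K_v},\mu_n^{\otimes d+1}) \arrow[r] & H^{a}_{\et}(\mathcal{X},\Z/n\Z)^{\vee}
\end{tikzcd}
\end{equation*}
in which the bottom row is the instance of (\ref{SaitoTatePoitou}) at $\mathcal{F}=\mu_n^{\otimes d+1}$ (which has $D(\mathcal{F})=\Z/n\Z$, and is exact by Saito--Geisser--Schmidt under hypothesis (c)), and the vertical arrows are the cycle class maps, made compatible with restricted products via Remark~\ref{remark:properties_of_restricted_product_of_chow_groups}.

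Next, I would plug each vertical arrow into the long exact sequence of Lemma~\ref{lemmauzun}: for $\mathcal{X}/\mathcal{S}$ assumption (c) forces $\mathrm{cd}_n(K)=2$ while closed points of $\mathcal{S}$ have finite residue field, so part (2) of the lemma applies with $c=0$ and places the left vertical inside a sequence with adjacent terms $KH^{(0)}_{a+2}(\mathcal{X})$ and $KH^{(0)}_{a+1}(\mathcal{X})$; for each local field $K_v$, part (1) of the lemma applies with $c=1$ and controls the middle vertical (place by place) via $KH^{(1)}_{a+1}(X_{K_v})$ and $KH^{(1)}_{a}(X_{K_v})$. The new Kato-type result Theorem~\ref{ThmKatoconj06}, together with Theorem~\ref{ThmKatoconj51} at those places in $S$ which are closed in $\mathcal{S}$ (used to identify the local and reduction contributions through the localization sequence (\ref{localizationsequencekatohomology})), then supplies a local-to-global compatibility between the two families of Kato homology groups, since the cone from Definition~\ref{definitionkatocomplexes}(3) has vanishing homology in positive degrees. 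A diagram chase through the square above, using exactness of the bottom row and these Kato identifications to transport the cokernel of the left vertical through the middle column, proves (1); restricting the chase to the kernels of the cycle class maps and invoking the same cokernel analysis upgrades the induced map $\Sha(\CH^{d+1}(\mathcal{X},a,\Z/n\Z))\to\Sha^{2d+2-a,d+1}_{\et}(\mathcal{X})$ to a surjection, giving (2).

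For the finiteness in (3) and (4), the strategy is to wedge $\Sha(\CH^{d+1}(\mathcal{X},a,\Z/n\Z))$ between two finite groups: the kernel of the surjection from (2), which is controlled via Lemma~\ref{lemmauzun} and the restricted product structure by finitely many local Kato homology groups $KH^{(1)}_{\ast}(X_{K_v})$, and the target $\Sha^{\et}$ itself, which embeds into a finite étale cohomology group of the proper $\mathcal{S}$-scheme $\mathcal{X}$. Case (4) is then immediate from Lemma~\ref{lemmafiniteness}(1), since $n$ is prime to all residue characteristics. The main obstacle is case (3): at places $v\mid n$ in the number-field setting one loses the prime-to-residue-characteristic hypothesis of Lemma~\ref{lemmafiniteness}(1) and must appeal instead to Lemma~\ref{lemmafiniteness}(2), which needs the local hypothesis $(\star)$ on $X_{K_v}$ and is only available in low homological degree; this is exactly what forces the restriction to $a=1$ in (3).
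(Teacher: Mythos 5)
Your proposal is correct and follows essentially the same route as the paper: the comparison of the Chow-group sequence with Saito's Poitou--Tate sequence via cycle class maps, with the vertical maps controlled by the Kato-homology exact sequences of Lemma~\ref{lemmauzun}, the local-global identification of Kato homology supplied by Theorems~\ref{ThmKatoconj05}, \ref{ThmKatoconj06} and \ref{ThmKatoconj51}, and finiteness in (3) and (4) obtained by squeezing $\Sha(\CH^{d+1}(\mathcal{X},a,\Z/n\Z))$ between the finite restricted product of local Kato homology groups (finite by Lemma~\ref{lemmafiniteness}, almost all fibers having good reduction) and the finite group $\Sha^{2d+2-a,d+1}_{\et}(\mathcal{X})$. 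The only point you leave implicit is the degenerate case $\mathcal{S}=\Spec K$, which the paper treats separately with $KH^{(1)}(X)$ and Theorem~\ref{ThmKatoconj04} in place of the relative complexes.
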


\begin{proof}
First note that all relevant groups vanish for $a = 0$, so assume $a \geq 1$. If $\dim(\mathcal{S}) = 1$, consider the following commutative diagram:
\[
\begin{tikzcd}
KH^{(0)}_{a+2}(\mathcal{X}/S,\Z/n\Z) \arrow[r, "\cong"]\arrow[d] & \prod_{v\in S} KH^{(1)}_{a+1}(X_{K_v},\Z/n\Z) \arrow[d] \\
\CH^{d+1}(\mathcal{X},a,\Z/n\Z) \arrow[r, "\alpha"] \arrow[d] & \resprod_{v\in S}\CH^{d+1}(X_{K_v},a,\Z/n\Z) \ar[d]  \\
H^{2d+2-a}_{\text{\'et}}(\mathcal{X},\Z/n\Z(d+1)) \arrow[d]  \arrow[r] & \resprod_{v\in S} H^{2d+2-a}_{\text{\'et}}(X_{K_v},\Z/n\Z(d+1)) \arrow[d] \\
KH^{(0)}_{a+1}(\mathcal{X}/S,\Z\/n\Z) \arrow[r, "\cong"] & \prod_{v\in S} KH^{(1)}_{a}(X_{K_v},\Z/n\Z)
\end{tikzcd}
\]
The isomorphisms in the first and the last row follow from Theorem \ref{ThmKatoconj05} and \ref{ThmKatoconj06}. The collumns are exact by Lemma \ref{lemmauzun} and by the assumptions on $K$ and $n$. The third row is part of the generalisation of Saito's Poitou-Tate exact sequence
\[
H^{2d+2-a}_{\text{\'et}}(\mathcal{X},\Z/n\Z(d+1)) \to  \resprod_{v\in S} H^{2d+2-a}_{\text{\'et}}(X_{K_v},\Z/n\Z(d+1)) \to H^a_{\et}(\mathcal{X}, \Z/n\Z)^{\vee} \to \ldots
\]
by Geisser and Schmidt (see \cite{GS18}). The two middle rows also fit into the commutative diagram
\[
\begin{tikzcd}
\Sha(\CH^{d+1}(\mathcal{X},a,\Z/n\Z))\ar[r, hook] \ar[d]_{} & \CH^{d+1}(\mathcal{X},a,\Z/n\Z) \ar[r, "\alpha"] \ar[d]_{}&\resprod_{v\in S}\CH^{d+1}(X_{K_v},a,\Z/n\Z)\arrow[d]\\
\Sha^{2d+2-a,d-1}_\et(\mathcal{X}) \ar[r, hook] & H^{2d+2-a}_{\text{\'et}}(\mathcal{X},\Z/n\Z(d+1)) \ar[r] &\resprod_{v\in S} H^{2d+2-a}_{\text{\'et}}(X_{K_v},\Z/n\Z(d+1))
\end{tikzcd}
\]
The statements $(1)$ and $(2)$ now follow from a diagram chase.

For the case of $S = P_K$, i.e. $\mathcal{S}=\Spec K$ and dim$\mathcal{S}=0$, the proof procedes in the same way, except by starting with the analogous commutative diagram
$$\begin{xy} 
  \xymatrix{
   KH^{(1)}_{a+1}(X,\Z/n\Z) \ar[d]_{} \ar[r]^-{\cong} & \prod_{v\in P_K} KH^{(1)}_{a+1}(X_{K_v},\Z/n\Z)  \ar[d]_{} \\
\CH^{d+1}(X,a,\Z/n\Z) \ar[r]^-{\alpha} \ar[d]_{} & \resprod_{v\in P_K}\CH^{d+1}(X_{K_v},a,\Z/n\Z) \ar[d]_{} \\
H^{2d+2-a}_{\text{\'et}}(X,\Z/n\Z(d+1)) \ar[d]_{}   \ar[r] & \resprod_{v\in P_K} H^{2d+2-a}_{\text{\'et}}(X_{K_v},\Z/n\Z(d+1)) \ar[d]_{}   \\
 KH^{(1)}_{a}(X,\Z/n\Z) \ar[r]^-{\cong} & \prod_{v\in P_K} KH^{(1)}_{a}(X_{K_v},\Z/n\Z).
  }
\end{xy} $$
For $(3)$ and $(4)$ note first that the products $\prod_{v\in P_K}' KH^{(1)}_{a+1}(X_{K_v},\Z/n\Z)$ and $\prod_{v\in P_K}' KH^{(1)}_{a}(X_{K_v},\Z/n\Z)$ are products of finitely many groups since almost all $X_{K_v}$ have good reduction. The statements now follow from Lemma \ref{lemmafiniteness} and the fact that $\Sha_{\et}^{2d+2-a,d-1}(X)$ is finite. The latter follows from \cite{Sa89} (see also \cite[Thm. A]{GS18}).
\end{proof}
\begin{remark}
	Inspecting the diagrams in the proof one finds an isomorphism between the kernel of
	\[
	\Sha(\CH^{d+1}(\mathcal{X},a,\Z/n\Z))\r \Sha^{2d+2-a,d+1}_\et(\mathcal{X})
	\]
	and the cokernel of
	\[
		\resprod_{v\in S}\CH^{d+1}(X_{K_v},a+1,\Z/n\Z) \to  \ker\Big(H^{a+1}_{\et}(\mathcal{X}, \mu_n^{\otimes j})^{\vee} \to H^{2d+2-a}_{\text{\'et}}(X_{K_v},\Z/n\Z(d+1))\Big).
	\]
	One might hence suspect that they fit together into a natural long exact sequence
	\[
		\ldots \to \resprod_{v\in S}\CH^{d+1}(X_{K_v},a+1,\Z/n\Z) \to H^{a+1}_{\et}(\mathcal{X}, \mu_n^{\otimes j})^{\vee}  \to \CH^{d+1}(\mathcal{X},a,\Z/n\Z) \to  \ldots
	\]
	but there seems to be a non-trivial extension problem one needs to solve in oder to prove this. 
\end{remark}
\begin{remark}
For $b\geq d+2, a\geq 0$ and $n$ odd, the map
$$\Sha(\CH^{b}(X,a,\Z/n\Z))\r \Sha^{2b-a,d+1}_\et(X)$$
is an isomorphism and the sequence
$$\CH^{b}(X,a,\Z/n\Z)\r \prod_{v\in P_K}' \CH^{b}(X_{},a,\Z/n\Z)\r H^{2d+2-2b+a}_{\et}(X_{k_v},\mu_n^{\otimes d+1-b})^\vee$$
is exact. This follows from cohomological dimension.
\end{remark}

\begin{corollary}\label{corollarymaintheorem2} 
Let the assumptions be as in Theorem \ref{maintheorem2}. Let $S=P_K$ and $X=\mathcal{X}$. Then the following statements hold:
\begin{enumerate}
\item Conjecture \ref{conj2} holds for $i \geq d+1$ and all $a$.
\item Conjecture \ref{conj3} holds for $i \geq d+1$ and all $a$.
\end{enumerate}
\end{corollary}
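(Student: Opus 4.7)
The plan is that both parts follow essentially directly from Theorem \ref{maintheorem2intext}(1) together with the remark recorded immediately after its proof, so the work amounts to re-indexing and a short diagram chase.

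First, I would dispose of Conjecture \ref{conj3}. For $i = d+1$ one has $j = d+1-i = 0$, hence $\mu_n^{\otimes j} = \Z/n\Z$ and the target $H^{2j+a}_{\et}(X, \mu_n^{\otimes j})^{\vee}$ is literally $H^a_\et(X, \Z/n\Z)^{\vee}$. The complex appearing in Conjecture \ref{conj3} is then precisely the one produced by Theorem \ref{maintheorem2intext}(1) in the case $\mathcal{X}=X$, $\mathcal{S}=\Spec K$, $S=P_K$; the hypotheses of that theorem coincide with those assumed here. For $i \geq d+2$, the remark following Theorem \ref{maintheorem2intext} already records the exactness of the analogous complex by a cohomological dimension argument, which is exactly Conjecture \ref{conj3} in this range.

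Next, I would deduce Conjecture \ref{conj2}. Given a family $\{z_v\}_{v \in P_K} \in \resprod_{v\in P_K} \CH^i(X_{K_v}, a, \Z/n\Z)$ orthogonal to every $\xi \in H^{2d+2-2i+a}_\et(X, \mu_n^{\otimes d+1-i})$, the commutative pairing diagram displayed just before the statement of Conjecture \ref{conj2} identifies this orthogonality with the vanishing of the image of $\{z_v\}$ under the right-hand arrow of the complex in Conjecture \ref{conj3}. The exactness established in the previous step then produces a global class $z_n$ in $\CH^i(X, a, \Z/n\Z)$ lifting $\{z_v\}$, and composing with the étale cycle class map $\CH^i(X_{K_v}, a, \Z/n\Z) \to \tilde{H}^{2i-a}_\et(X_{K_v}, \mu_n^{\otimes i})$ guarantees that the images of $z_n$ and $z_v$ in \'etale cohomology coincide at every place $v$.

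The only genuine obstacle is cosmetic: Conjecture \ref{conj2} is phrased in terms of an integral cycle $z_n \in \CH^i(X, a)$, while the natural output of the argument is a class in $\CH^i(X, a, \Z/n\Z)$. Since the conclusion of the conjecture only compares classes in mod-$n$ \'etale cohomology, this distinction is immaterial, and the corollary follows as a direct translation of Theorem \ref{maintheorem2intext}(1) and the subsequent cohomological-dimension remark.
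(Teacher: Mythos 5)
Your proposal is correct and matches the paper's treatment: the paper states this corollary without any explicit proof, treating it as an immediate consequence of Theorem \ref{maintheorem2intext}(1) (which for $i=d+1$, i.e.\ $j=0$, is literally the complex of Conjecture \ref{conj3}) together with the cohomological-dimension remark for $i\geq d+2$, exactly as you argue. The one point you flag --- that the exactness only produces a lift in $\CH^i(X,a,\Z/n\Z)$ rather than an integral class in $\CH^i(X,a)$ --- is a caveat the paper leaves entirely implicit, and your reading of Conjecture \ref{conj2} in the mod-$n$ sense is evidently the one the authors intend.
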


\begin{corollary}\label{corCFT}
\begin{enumerate}
\item If $a=1$, then the map
$$\mathrm{C}(X)/n:= \mathrm{coker}[\CH^{d+1}(X,1,\Z/n\Z)\r \prod'_{v\in P_K}\CH^{d+1}(X_{K_v},1,\Z/n\Z)]\r \pi_1^{\mathrm{ab}}(X)/n$$
is an isomorphism.
\item Let $a\geq 2$. The map 
$$\mathrm{coker}[\CH^{d+1}(X,a,\Z/n\Z)\r \prod'_{v\in P_K}\CH^{d+1}(X_{K_v},a,\Z/n\Z)]\r H^{a}_{\et}(X_{},\mu_n^{\otimes j})^\vee$$
is injective.
\end{enumerate}
\end{corollary}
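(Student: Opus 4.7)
The plan is to derive both statements as essentially direct consequences of the exact sequence in Theorem~\ref{maintheorem2}(1), applied in the generic-fiber case $\mathcal{S} = \Spec K$, $\mathcal{X} = X$. For part (2), since we are working with $\CH^{d+1}$ we have $i=d+1$ and hence $j = d+1-i = 0$, so $\mu_n^{\otimes j} = \Z/n\Z$ and $H^{2j+a}_{\et}(X,\mu_n^{\otimes j})^\vee = H^{a}_{\et}(X,\Z/n\Z)^\vee$. The injectivity of the induced map on cokernels is then immediate from the exactness of
\[
\CH^{d+1}(X,a,\Z/n\Z) \to \resprod_{v\in P_K}\CH^{d+1}(X_{K_v},a,\Z/n\Z) \to H^{a}_{\et}(X,\Z/n\Z)^\vee.
\]

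For part (1), I would first specialise the same exact sequence to $a=1$, which gives an injection
\[
\mathrm{C}(X)/n \hookrightarrow H^1_{\et}(X,\Z/n\Z)^\vee.
\]
Next I would identify the right-hand side with $\pi_1^{\mathrm{ab}}(X)/n$. Since $H^1_{\et}(X,\Z/n\Z) \cong \mathrm{Hom}_{\mathrm{cts}}(\pi_1^{\mathrm{ab}}(X),\Z/n\Z)$ by the universal property of the \'etale fundamental group, and since $\pi_1^{\mathrm{ab}}(X)/n$ is finite for $X$ smooth and projective over a global field (as guaranteed by the finiteness results underlying the classical reciprocity theorem), Pontryagin duality identifies $H^1_{\et}(X,\Z/n\Z)^\vee$ with $\pi_1^{\mathrm{ab}}(X)/n$. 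At this stage the map in question is a canonical injection.

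For the surjectivity assertion in part (1), I would invoke the existing unramified higher class field theory of Bloch, Kato and Saito, as cited in the discussion following Theorem~\ref{maintheorem2}: for $X$ smooth projective over a global field the reciprocity map
\[
\mathrm{coker}\bigl[\CH^{d+1}(X,1,\Z/n\Z) \to \resprod_{v\in P_K}\CH^{d+1}(X_{K_v},1,\Z/n\Z)\bigr] \to \pi_1^{\mathrm{ab}}(X)/n
\]
is known to be surjective. The only point requiring care is to verify that the injection produced by the main theorem coincides, up to sign, with this classical reciprocity map; this amounts to compatibility between the \'etale cycle class map $\CH^{d+1}(X_{K_v},1,\Z/n\Z)\to H^{2d+1}_{\et}(X_{K_v},\Z/n(d{+}1))$ used in the proof of Theorem~\ref{maintheorem2} and the local reciprocity maps of Kato--Saito. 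Since both are induced by the same cycle class construction and the same Poincar\'e duality identifications, this comparison is essentially formal, and I expect it to be the main (but mild) obstacle — a matter of tracking definitions through the diagram used in the proof of Theorem~\ref{maintheorem2}. Combining injectivity from our exact sequence with the classical surjectivity then yields the isomorphism.
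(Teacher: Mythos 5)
Your treatment of injectivity is exactly the paper's: both parts follow from exactness of the sequence in Theorem~\ref{maintheorem2intext}(1) at the middle term (the paper's citation of ``Theorem~\ref{thm3intext}(3)'' at this point is evidently a typo for that statement), and your observation that $j=0$ for $i=d+1$, so that $H^{2j+a}_{\et}(X,\mu_n^{\otimes j})^\vee=H^a_{\et}(X,\Z/n\Z)^\vee$, is the right bookkeeping. Where you diverge is the surjectivity in part (1). The paper gets it either directly from Chebotarev density (density of Frobenius elements of closed points in $\pi_1^{\mathrm{ab}}(X)$ already forces surjectivity onto the finite quotient $\pi_1^{\mathrm{ab}}(X)/n$), or, alternatively, from a diagram chase comparing the degree-$(2d+2)$ tail of Saito's sequence (\ref{SaitoTatePoitou}) with the exact sequence $0\to KH_0^{(1)}(X,\Z/n\Z)\to\bigoplus_v KH_0^{(1)}(X_{K_v},\Z/n\Z)\to\Z/n\Z\to 0$ of Theorem~\ref{ThmKatoconj04}; the injectivity of $KH_0^{(1)}(X,\Z/n\Z)\to\bigoplus_v KH_0^{(1)}(X_{K_v},\Z/n\Z)$ forces $H^1_{\et}(X,\Z/n\Z)^\vee\to H^{2d+2}_{\et}(X,\mu_n^{\otimes d+1})$ to vanish, which is the needed surjectivity. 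You instead quote the surjectivity of the classical Bloch--Kato--Saito reciprocity map. That is logically adequate, but it is somewhat against the grain of the corollary, whose stated purpose is to \emph{recover} unramified class field theory rather than to presuppose it; moreover the classical surjectivity is itself proved by Chebotarev, so you are importing the same input at one remove while adding the burden (which you correctly flag but do not discharge) of matching your map, built from the \'etale cycle class map and Saito's duality, with the Kato--Saito reciprocity map. Replacing that citation by a direct appeal to Chebotarev density, or by the Kato-conjecture diagram chase above, makes the argument self-contained and removes the compatibility check.
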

\begin{proof}
The injectivity follows in both cases from Theorem \ref{thm3intext}(3). The surjectivity of $\rho$ in (1) follows from Chebotarev density. A different way to deduce it from the Kato conjectures is the following: consider the diagram with exact rows 
$$\begin{xy} 
  \xymatrix{
  H^1(X,\Z/n\Z)^\vee\ar[r] & H^{2d+2}(X,\mu_n^{d+1})\ar[r] \ar[d]^{\cong} & \prod'H^{2d+2}(X,\mu_n^{d+1})\ar[r] \ar[d]^{\cong} & H^0(X,\Z/n\Z)^\vee \ar[d]^{\cong} & \\
  0 \ar[r] & KH_0^{(1)}(X,\Z/n\Z)  \ar[r] & \bigoplus_{v\in P_K} KH_0^{(1)}(X_{K_v},\Z/n\Z) \ar[r] & \Z/n\Z\ar[r] & 0.
    }
\end{xy} $$
The exactness of the second row follows from Theorem \ref{ThmKatoconj04} and the exactness of the first row from Saito's exact sequence \ref{SaitoTatePoitou}. The vertical maps can be shown to be isomorphisms using the spectral sequence of Lemma \ref{Lemmaetaleconiceauspectralseq} or Lemma \ref{lemmauzun}. The statement now follows from a diagram chase.
\end{proof}

Corollary \ref{corCFT}(1) recovers the unramified class field theory of arithmetic schemes (see \cite{Sa85}). In the next section we strenthen the above argument to also cover the ramified case.

\begin{quest}
$H^{a}_{\et}(X_{},\mu_n^{\otimes j})^\vee$ may be interpreted as a higher homotopy group $"\pi_a(X)/n"$ and should be generated by algebraic cycles. We would like to ask if the map 
$$\mathrm{coker}[\CH^{d+1}(X,a,\Z/n\Z)\r \prod'_{v\in P_K}\CH^{d+1}(X_{K_v},a,\Z/n\Z)]\r H^{a}_{\et}(X_{},\mu_n^{\otimes j})^\vee$$
is surjective, i.e. an isomorphism, and if
$$\Sha(\CH^{d+1}(X,a,\Z/n\Z))=0.$$
\end{quest}

\begin{remark}\label{remarkconj11forcurves}
For $X$ a scheme of finite type over a number field $K$ or $K_v, v\in P_K,$ we denote the complexes
\begin{multline*}
...\r \bigoplus_{x\in X_a}H^{a+1}(k(x),\Z/n\Z(a))\r ...\r 
\bigoplus_{x\in X_{a-1}}H^{a}(k(x),\Z/n\Z(a-1))\r...\\ ...\r 
\bigoplus_{x\in X_1}H^{2}(k(x),\Z/n\Z(1)) \r \bigoplus_{x\in X_0}H^{1}(k(x),\Z/n\Z)
\end{multline*}
by $KC^{(0)}(X,\Z/n\Z)$.
Here the term $\oplus_{x\in X_a}H^{a+1}(k(x),\Z/n\Z(a))$ is placed in degree $a$. We set
$$KH_a^{(0)}(X,\Z/n\Z):= H_a(KC^{(0)}(X,\Z/n\Z)).$$ 

In the case of zero-cycles, i.e. the case $i=d$ of Conjecture \ref{conj1} and Conjecture \ref{conj2}, difference between \'etale and zariski motivic cohomology is measured by $KC^{(0)}(X,\Z/n\Z)$ and one additional row in the coniveau spectral sequence converging to \'etale cohomology. The approach taken in the proof of Theorem \ref{maintheorem2intext} therefore only works in small dimensions.

As mentioned in the introduction, Conjecture \ref{conjA} is known to hold if $X$ is a curve and if the Tate-Shafarevich group of the Jacobian of $X$ does not contain a non-zero element which is infinitely divisible (see \cite[Sec. 7]{Sa89'}, \cite[Sec. 3]{Co99} and \cite[Rem. 1.1(iv)]{Wi12}).
We recall the argument of \cite[Sec. 3]{Co99}, where it is shown that a version of Conjecture \ref{conj2} with $\Q_\ell/\Z_\ell$-coefficients holds for such an $X$, using the above framework. Considering the map of coniveau spectral sequences 
$$ ^\CH E_1^{p,q}(X,d)\r ^{\et}E_1^{p,q}(X,\Z/\ell^n\Z(d))$$
gives the following commutative diagram with exact rows and collums:
$$\begin{xy} 
  \xymatrix{
  0 \ar[d]_{} & 0 \ar[d]_{} & \\
 \CH^{1}(X,\Z/\ell^n\Z) \ar[r]^-{} \ar[d]_{} & \prod_{v\in P_K}\CH^{1}(X_{K_v},\Z/\ell^n\Z) \ar[d]_{} & \\
 H^{2}_{\et}(X,\Z/\ell^n\Z(1)) \ar[d]_{}   \ar[r] & \prod_{v\in P_K} H^{2}_{\et}(X_{K_v},\Z/\ell^n\Z(1)) \ar[d]_{} \ar[r] &  H^{2}_{\et}(X,\mu_{\ell^n}^{\otimes 1})^\vee   \\
  KH_1^{(0)}(X,\Z/\ell^n\Z) \ar[r]^-{} \ar[d]_{} & \bigoplus_{v\in P_K} KH_1^{(0)}(X_{K_v},\Z/\ell^n\Z) \ar[d]_{} & \\
0 & 0 &  &
  }
\end{xy} $$

Now $KH_1^{(0)}(X,\Z/\ell^n\Z)\cong \Br(X)[\ell^n]$ and $KH_1^{(0)}(X,\Z/\ell^n\Z)\cong \Br(X_{K_v})[\ell^n]$. In fact, the collumns also arise from the exact sequence of sheaves $0\r \mu_{\ell^n}\r \O_X^\times\xrightarrow{\cdot\ell^n} \O_X^\times\r 0$. In \cite{Co99}, Colliot-Th\'el\`ene considers a family of cycles $\{z_v\}\in \prod_{v\in P_K}\CH^{1}(X_{K_v})$ which is orthogonal to every class $\xi\in H^{2}_{\et}(X,\Q_\ell/\Z_\ell(1))$ under the pairing 
$$\prod_{v\in P_K}\CH^{1}(X_{K_v},\Z/\ell^n\Z) \times H^{2}_{\et}(X,\Q_\ell/\Z_\ell(1))\r \Q_\ell/\Z_\ell$$
and shows that its image in $\prod_{v\in P_K} H^{2}_{\et}(X_{K_v},\Z/\ell^n\Z(1))$, i.e. after passing to the limit over $n$ in the above diagram, is still in the image of $H^{2}_{\et}(X,\Q_\ell/\Z_\ell(1))$ (see \textit{loc.~cit., Prop. 3.1}). The assertion then follows from the fact that 
$$\mathrm{ker}[\varprojlim_{\ell^n}\Br(X)[\ell^n]\r \varprojlim_{\ell^n}\Br(X_{K_v})[\ell^n]]=0$$
if the Tate-Shafarevich group of the Jacobian of $X$ does not contain a non-zero element which is infinitely divisible (see \textit{loc.~cit., Lem. 3.6}).
\end{remark}

\section{Ramified global class field theory of Kato-Saito}\label{sectionraamifiedcft}
In this section we generalise Corollary \ref{corCFT} to the ramified case. This is very similar to the treatment of class field theory over local and finite fields of Kerz and Zhao in \cite{KeZh}.
\begin{theorem}\label{theoremunramified}
Let $K$ be a global field. Let $n$ be invertible in $K$. If $K$ is a number field assume furthermore that either $n$ is odd or that $K$ has no real places. Let $X$ be a smooth projective scheme over $\Spec(K)$. Let $D\subset X$ be an effective divisor on $X$ and $j:U\hookrightarrow X$ be the inclusion.
Then there is an isomorphism
$$\mathrm{coker}[H^d_\Nis(X,\mathcal{K}^M_{d+1,X|D}/n)\r \resprod_{v\in P_K}H^d_\Nis(X_{K_v},\mathcal{K}^M_{d+1,X_{K_v}|D_{K_v}}/n)]\r \pi_1^{\mathrm{ab}}(U)/n.$$
\end{theorem}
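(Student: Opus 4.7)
The plan is to adapt the diagram chase of Corollary~\ref{corCFT}(1) to the ramified setting, by substituting the \'etale coefficient sheaf $\mu_n^{\otimes d+1}$ with $j_!\mu_n^{\otimes d+1}$ throughout and invoking Proposition~\ref{propKatoopen} in place of Theorem~\ref{ThmKatoconj04}. This is the approach pursued by Kerz and Zhao in their treatment of ramified class field theory over local and finite base fields.

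The first step is to assemble a commutative diagram with exact rows
\[
\begin{tikzcd}[column sep=small]
H^1_\et(U,\Z/n)^\vee \ar[r] & H^{2d+2}_\et(X,j_!\mu_n^{\otimes d+1}) \ar[r] \ar[d,"\cong"] & \resprod_{v\in P_K} H^{2d+2}_\et(X_{K_v},j_!\mu_n^{\otimes d+1}) \ar[r]\ar[d,"\cong"] & H^0_\et(U,\Z/n)^\vee \ar[d,"\cong"] & \\
0 \ar[r] & KH_0^{(1)}(X,j_!\mu_n^{\otimes d+1}) \ar[r] & \bigoplus_{v\in P_K} KH_0^{(1)}(X_{K_v},j_!\mu_n^{\otimes d+1}) \ar[r] & \Z/n \ar[r] & 0
\end{tikzcd}
\]
The bottom row is the ramified counterpart of Theorem~\ref{ThmKatoconj04}, supplied by Proposition~\ref{propKatoopen} in positive degrees and extended to the $a = 0$ term by the same Bloch--Kato argument, using that $X$ is geometrically connected. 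The top row is a five-term segment of the Saito--Geisser--Schmidt Tate--Poitou exact sequence~\eqref{SaitoTatePoitou} applied with the constructible sheaf $Rj_*\Z/n$ on $X$ (whose Verdier dual is essentially $j_!\mu_n^{\otimes d+1}$), combined with the standard identification $\pi_1^{\mathrm{ab}}(U)/n \cong H^1_\et(U,\Z/n)^\vee$. The vertical isomorphisms follow from Lemma~\ref{lemmauzun} adapted to the coefficient sheaf $j_!\mu_n^{\otimes d+1}$: the long exact sequence collapses to the isomorphism $H^{2d+2}_\et(X,j_!\mu_n^{\otimes d+1}) \cong KH_0^{(1)}(X,j_!\mu_n^{\otimes d+1})$ since the relevant higher Chow groups vanish for dimension reasons.

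The second step is to transfer the resulting \'etale statement to the Milnor K side via cycle class maps. The Gersten-type resolution of the relative Milnor K-sheaf $\mathcal{K}^M_{d+1,X|D}$ due to Kerz and Saito, combined with the Bloch--Kato/Rost--Voevodsky theorem $K^M_r(k)/n \cong H^r(k,\mu_n^{\otimes r})$, identifies $H^d_\Nis(X,\mathcal{K}^M_{d+1,X|D}/n)$ with the cokernel of the Gersten boundary supported on cycles in $U$. Unwinding the definitions in Proposition~\ref{propKatoopen} and Lemma~\ref{Lemmaetaleconiceauspectralseq}, one checks that this cokernel agrees with $KH_0^{(1)}(X,j_!\mu_n^{\otimes d+1})$, compatibly with restriction to each completion $X_{K_v}$. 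A short diagram chase using the exactness of both rows and the three vertical isomorphisms then identifies the cokernel of the middle restriction map with $H^1_\et(U,\Z/n)^\vee \cong \pi_1^{\mathrm{ab}}(U)/n$, as claimed.

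The main technical obstacle is the Milnor-K-to-Kato identification in the second step and its compatibility under restriction to the completions $X_{K_v}$; this relies on the availability of the Kerz--Saito Gersten resolution for Milnor K-theory with modulus and on matching the boundary maps with those of the coniveau spectral sequence for $j_!\mu_n^{\otimes d+1}$. A secondary technical point is that~\eqref{SaitoTatePoitou} as stated requires locally constant constructible coefficients, whereas $Rj_*\Z/n$ is only constructible. This is addressed either by a mild extension of the statement of~\eqref{SaitoTatePoitou} to constructible sheaves or, equivalently, by invoking the compactly supported Artin--Verdier duality directly on the open part $U$.
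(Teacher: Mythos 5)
Your overall strategy---replace $\mu_n^{\otimes d+1}$ by $j_!\mu_n^{\otimes d+1}$, identify the relative Milnor $K$-cohomology with a term of the coniveau spectral sequence, and run a local-to-global diagram chase using Proposition~\ref{propKatoopen} and the Geisser--Schmidt duality sequence---is the right one and is what the paper does. But there is a degree error in your second step that breaks the argument. With the paper's conventions, $KH_0^{(1)}(X,j_!\mu_n^{\otimes d+1})$ is the $E_2^{d,d+2}$-term of the coniveau spectral sequence: its degree-zero entry is $\bigoplus_{x\in X_0}H^2(k(x),\mu_n)$, it is the unique graded piece of $H^{2d+2}_{\et}(X,j_!\mu_n^{\otimes d+1})$, and your step-one identification $H^{2d+2}_{\et}\cong KH_0^{(1)}$ is correct. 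By contrast, $H^d_\Nis(X,\mathcal{K}^M_{d+1,X|D}/n)$ is the $E_2^{d,d+1}$-term, built from $\bigoplus_{x\in X_0}H^1(k(x),\mu_n)$ and contributing to $H^{2d+1}_{\et}$. Your claim that this group ``agrees with $KH_0^{(1)}(X,j_!\mu_n^{\otimes d+1})$'' would force $H^d_\Nis(X,\mathcal{K}^M_{d+1,X|D}/n)\cong H^{2d+2}_{\et}(X,j_!\mu_n^{\otimes d+1})$, which already fails for $X=\Spec(K)$ and $D=\emptyset$: the left side is $K^\times/n$ while the right side is $\Br(K)[n]$. As written, your chase computes the cokernel of the restriction map on $H^{2d+2}_{\et}$, which by your own bottom row is $\Z/n\Z=H^0_{\et}(U,\Z/n\Z)^\vee$, not $\pi_1^{\mathrm{ab}}(U)/n$.

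The missing piece is the comparison in total degree $2d+1$. The graded pieces of $H^{2d+1}_{\et}(X,j_!\mu_n^{\otimes d+1})$ are $E_\infty^{d,d+1}$ and $E_\infty^{d-1,d+2}=KH_1^{(1)}$, and $E_\infty^{d,d+1}$ is the cokernel of the $d_2$-differential out of $E_2^{d-2,d+2}=KH_2^{(1)}$. One therefore needs the exact columns $KH_2^{(1)}\r H^d_\Nis(\,\cdot\,,\mathcal{K}^M_{d+1,\cdot|D}/n)\r H^{2d+1}_{\et}(\,\cdot\,,j_!\mu_n^{\otimes d+1})\r KH_1^{(1)}$ over $K$ and over each $K_v$, together with the isomorphisms $KH_a^{(1)}(X,j_!\mu_n^{\otimes d+1})\cong\prod_{v\in P_K} KH_a^{(1)}(X_{K_v},j_!\mu_n^{\otimes d+1})$ for $a=1$ and $a=2$ from Proposition~\ref{propKatoopen}; this is where that proposition actually enters, not in degree $0$. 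Your work is not wasted: the injectivity of $H^{2d+2}_{\et}(X,j_!\mu_n^{\otimes d+1})\r\prod_{v}H^{2d+2}_{\et}(X_{K_v},j_!\mu_n^{\otimes d+1})$ established in your first step is exactly what yields, via the duality sequence, the surjectivity of $\resprod_{v}H^{2d+1}_{\et}(X_{K_v},j_!\mu_n^{\otimes d+1})\r\pi_1^{\mathrm{ab}}(U)/n$ (the analogue of the Chebotarev step in Corollary~\ref{corCFT}), and your Gersten-resolution identification of $H^d_\Nis(X,\mathcal{K}^M_{d+1,X|D}/n)$ is the right input (the paper cites \cite{Sa05} for the isomorphism with $E_2^{d,d+1}(j_!\mu_n^{\otimes d+1})$)---but it must land in the $q=d+1$ row, not in the Kato-complex row $q=d+2$.
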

\begin{proof}
There is a commutative diagram with exact rows 
$$\begin{xy} 
  \xymatrix{
      \bigoplus_{x\in X_1} H^{d-1}_x(X_\Nis,\mathcal{K}^M_{d+1,X|D}/n) \ar@{->>}[d]
     \ar[r] & \bigoplus_{x\in X_0} H^d_x(X_\Nis,\mathcal{K}^M_{d+1,X|D}/n) \ar@{->>}[r] \ar[d]^{\cong} & H^d_\Nis(X,\mathcal{K}^M_{d+1,X|D}/n) \ar[d]^{\cong}\\
      \bigoplus_{x\in X_1} H^{2d}_x(X,\mu_n^{d+1})\ar[r]^{\partial}  & \bigoplus_{x\in X_0}H^{2d+1}_x(X,\mu_n^{d+1})\ar[r]  & \mathrm{coker}\partial \\
    }
\end{xy} $$
By \cite{Sa05} the middle vertical map is an isomorphism and the left vertical map is surjective. This implies that the right vertical map is an isomorphism. 
The niveau spectral sequence 
$$E_1^{p,q}(X,j_!\mu_n^{\otimes d+1})=\bigoplus_{x\in X^{(p)}}H_x^{p+q}(X,j_!\mu_n^{\otimes d+1})\Rightarrow H^{p+q}(X,j_!\mu_n^{\otimes d+1}),$$
in which $\mathrm{coker}\partial\cong E_2^{d,d+1}$, and the same argument for the $X_{K_v}$ therefore implies that there is a commutative diagram with exact rows and collums
\[
\begin{tikzcd}
KH^{(1)}_{2}(X,j_!\mu_n^{\otimes d+1}) \arrow[r, "\cong"]\arrow[d] & \prod_{v\in P_K} KH^{(1)}_{2}(X_{K_v},j_!\mu_n^{\otimes d+1}) \arrow[d]& \\
H^d_\Nis(X,\mathcal{K}^M_{d+1,X|D}/n) \arrow[r, "\alpha"] \arrow[d] & \resprod_{v\in P_K}H^d_\Nis(X_{K_v},\mathcal{K}^M_{d+1,X_{K_v}|D_{K_v}}/n) \ar[d] & \\
H^{2d+1}_{\text{\'et}}(X,j_!\mu_n^{\otimes d+1})) \arrow[d]  \arrow[r] & \resprod_{v\in P_K} H^{2d+1}_{\text{\'et}}(X_{K_v},j_!\mu_n^{\otimes d+1})) \arrow[d]\arrow[r]& \pi_1^{\mathrm{ab}}(U)/n  \\
KH^{(1)}_{1}(X,j_!\mu_n^{\otimes d+1}) \arrow[r, "\cong"] & \prod_{v\in P_K} KH^{(1)}_{1}(X_{K_v},j_!\mu_n^{\otimes d+1}).&
\end{tikzcd}
\]
The isomorphisms in the first and last row follow from Proposition \ref{propKatoopen}. The theorem now follows from a diagram chase.

\end{proof}

Passing to the direct limit over all $D$, we obtain a description of $\mathrm{Gal}(\bar{F}/F)^\mathrm{ab}$, where $F$ is the funtion field of $X$. This recovers the following theorem of Kato and Saito:
\begin{theorem}(\cite[Thm. 9.1]{KaS86})
Let $X$ be a projective integral scheme over $\Z$ of dimension $d+1$ with function field $F$. Assume for simplicity that $F$ contains a totally imaginary field if $\mathrm{ch}(F)=0$. Then there is an isomorphism
$$\varprojlim_{n,D}H^{d+1}_\Nis(X,\mathcal{K}^M_{d+1,X|D}/n)\r \mathrm{Gal}(\bar{F}/F)^\mathrm{ab}.$$
\end{theorem}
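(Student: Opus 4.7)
The plan is to apply Theorem~\ref{theoremunramified} to the generic fiber of $X$ over a global field, use a localization argument to match the resulting cokernel with the Nisnevich cohomology on $X$ itself, and then take the inverse limit over $n$ and $D$ to recover the full absolute abelianized Galois group.

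First one reduces to the case where $X$ is regular, projective and flat over the ring of integers $\O_K$ of a global field $K$, with smooth projective generic fiber $X_K$ of dimension $d$; here $K = \Q$ in characteristic zero (with the totally imaginary hypothesis on $F$ ensuring, after replacing $K$ by a suitable finite extension, that $K$ has no real places), and $K$ is a function field of one variable over a finite field in positive characteristic. This is accomplished by resolution of singularities or by alterations, combined with the observation that both sides of the desired isomorphism are invariants of $F$ in a suitable sense.

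For each effective divisor $D$ on $X$ and each $n$ invertible on an open $\mathcal{S} \subset \Spec \O_K$, I would then establish a natural identification
\begin{equation*}
H^{d+1}_\Nis(X,\mathcal{K}^M_{d+1,X|D}/n) \cong \mathrm{coker}\bigl[H^d_\Nis(X_K,\mathcal{K}^M_{d+1,X_K|D_K}/n) \to \resprod_{v\in P_K} H^d_\Nis(X_{K_v},\mathcal{K}^M_{d+1,X_{K_v}|D_{K_v}}/n)\bigr]
\end{equation*}
by means of a Nisnevich localization / niveau spectral sequence argument along the closed fibers $X_v$ of $X/\Spec \O_K$. These closed fibers are codimension-one points of $X$, and the associated boundary maps produce precisely the local terms at finite places, with the modulus $D_{K_v}$ on $X_{K_v}$ arising from $D$ by base change to the henselization at $v$.

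By Theorem~\ref{theoremunramified}, the cokernel above is isomorphic to $\pi_1^{\mathrm{ab}}(X_K \setminus D_K)/n$. A standard purity and specialization argument, to the effect that abelian covers of $X \setminus D$ correspond to abelian covers of $X_K \setminus D_K$ which extend unramifiedly across the closed fibers and that this unramifiedness is precisely what the modulus $D$ records, identifies this in turn with $\pi_1^{\mathrm{ab}}(X \setminus D)/n$. Passing to $\varprojlim_{n,D}$ and using that every finite abelian extension of $F$ is unramified on some open $X \setminus D$ yields $\varprojlim_{n,D} \pi_1^{\mathrm{ab}}(X \setminus D)/n \cong \mathrm{Gal}(\bar F/F)^{\mathrm{ab}}$, completing the proof. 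The main obstacle is the localization comparison above, which requires careful bookkeeping of the modulus at each closed fiber; in addition, primes dividing $n$, where Theorem~\ref{theoremunramified} does not directly apply, must be handled either by an appeal to $p$-adic analogues in the spirit of Section~\ref{sectionpadic} or by a cofinality argument in the double limit that avoids such primes.
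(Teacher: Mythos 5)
Your proposal follows essentially the same route as the paper: the authors likewise deduce this statement from Theorem~\ref{theoremunramified} by identifying the (limit over $n$ and $D$ of the) groups $H^{d+1}_\Nis(X,\mathcal{K}^M_{d+1,X|D}/n)$, expressed through the local terms of the coniveau spectral sequence, with the cokernel built from the restricted product, and then passing to $\varprojlim_{n,D}$ to recover $\mathrm{Gal}(\bar F/F)^{\mathrm{ab}}$. Be aware that the paper also only asserts this comparison of the global Nisnevich cohomology with the restricted-product cokernel (``it can be shown that\ldots''), so the step you flag as the main obstacle is exactly the step the paper does not carry out either.
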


In fact, it can be shown that taking the inverse limit over the direct sum of the local terms (modulo relations) appearing in the expression of $H^{d+1}_\Nis(X,\mathcal{K}^M_{d+1,X|D}/n)$ via the coniveau spectral sequence, is isomorphic to the cokernel defined using the restricted product in Theorem \ref{theoremunramified}.

\section{The $p$-adic cycle class map}\label{sectionpadic}
In this section let $A$ be a henselian discrete valuation ring of characteristic zero with residue field of characteristic $p$ and function field $K$. Let $X$ be smooth and projective of relative dimension $d$ over $\Spec(A)$. Let $X_n$ denote the thickenings of the special $X_1$. Let $X_K$ denote the generic fiber of $X$.
In \cite[Sec. 10]{KEW16}, Kerz, Esnault and Wittenberg state the following conjecture:
\begin{conj}
Assume the Gersten conjecture for the Milnor K-sheaf $\mathcal{K}^M_{n,X}$. Then the restriction map $$res: \CH^{d}(X)/p^r \r "\mathrm{lim}_n" H^{d}(X_1,\mathcal{K}^M_{d,X_n}/p^r)$$ is an isomorphism.
\end{conj}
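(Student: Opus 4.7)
The plan is to establish the conjecture by matching both sides of the restriction map with a common \'etale-cohomological group, in the spirit of Theorem \ref{thm3intext} and its Corollary. First, the Gersten conjecture for $\mathcal{K}^M_{d,X}$ supplies a Bloch--Quillen-type isomorphism $\CH^d(X) \cong H^d_\Nis(X, \mathcal{K}^M_{d,X})$, and the short exact sequence $0 \r \mathcal{K}^M_{d,X} \xrightarrow{p^r} \mathcal{K}^M_{d,X} \r \mathcal{K}^M_{d,X}/p^r \r 0$ yields an injection $\CH^d(X)/p^r \hookrightarrow H^d_\Nis(X, \mathcal{K}^M_{d,X}/p^r)$, which is an isomorphism as soon as $H^{d+1}_\Nis(X, \mathcal{K}^M_{d,X})$ is $p^r$-divisible -- a condition expected from purity and the Krull dimension $d+1$ of $X$.

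Next I would identify $H^d_\Nis(X, \mathcal{K}^M_{d,X}/p^r)$ with $H^{2d}_\et(X, \mathcal{T}_r(d))$. By the norm residue isomorphism theorem of Voevodsky--Rost, the symbol map identifies the mod-$p^r$ Milnor K-sheaf with the $p$-adic \'etale Tate twist in the top degree of the Gersten resolution; together with a coniveau spectral sequence argument as in Lemmas \ref{Lemmaetaleconiceauspectralseq} and \ref{lemmauzun}, this produces the desired comparison on $X$. Applying the same reasoning to the thickenings $X_n$, replacing $\mathcal{K}^M$ with Kerz's improved Milnor K-sheaves so that a Gersten-type resolution remains available on the non-reduced $X_n$, yields the analogous isomorphisms $H^d_\Nis(X_1, \mathcal{K}^M_{d,X_n}/p^r) \cong H^{2d}_\et(X_n, \mathcal{T}_r(d))$.

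Finally, I would compare $H^{2d}_\et(X, \mathcal{T}_r(d))$ with $\varprojlim_n H^{2d}_\et(X_n, \mathcal{T}_r(d))$: since $A$ is henselian and $X$ is proper over $\Spec A$, the expected henselian (or formal) base change theorem for the $p$-adic Tate twists $\mathcal{T}_r(-)$, combined with the fact that all $X_n$ share the underlying topological space of $X_1$, should allow one to collapse the tower on the right to $H^{2d}_\et(X, \mathcal{T}_r(d))$ up to pro-isomorphism. Composing the three steps then exhibits the restriction map as a pro-isomorphism of pro-abelian groups, matching the $\varprojlim_n$ notation in the statement.

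The main obstacle is the second step: identifying the Milnor K-cohomology of the non-reduced schemes $X_n$ with the \'etale cohomology of $p$-adic Tate twists via the improved Milnor K-sheaves is currently only known in low relative dimension, which is precisely why the paper's own Corollary is limited to $d \leq 2$ and $p>d+2$, where Theorem \ref{thm3intext} supplies exactly the required input. A secondary technical issue is the vanishing of $\varprojlim^1$ needed to promote the pro-isomorphism to an honest isomorphism of abelian groups; this is expected to follow from the finiteness results of Section \ref{sectionfiniteness}.
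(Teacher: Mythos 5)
The statement you are trying to prove is not a theorem of the paper: it is an open conjecture of Kerz, Esnault and Wittenberg (stated in \cite[Sec. 10]{KEW16}), reproduced here verbatim, and the paper offers no proof of it. What the paper does prove, as ``evidence'', is Theorem \ref{thm3intext}, which concerns the \emph{shifted} weight: $\CH^{d+1}(X,1,\Z/p^r\Z)\cong H^{2d+1}_\et(X,\mathcal{T}_r(d+1))$, and only for $d\le 2$. The remark immediately after that proof is the crux: the twist by $d+1$ gives \emph{full purity} for the logarithmic de Rham--Witt sheaves on the special fiber, and the paper explicitly flags this as ``the main difference to the zero-cycle case'', i.e.\ to the weight-$d$ situation of the conjecture. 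Your second step --- identifying $H^d_\Nis(X,\mathcal{K}^M_{d,X}/p^r)$ with $H^{2d}_\et(X,\mathcal{T}_r(d))$ by ``the norm residue isomorphism together with a coniveau spectral sequence argument'' --- passes over exactly this obstruction: in weight $d$ the niveau spectral sequence for $\mathcal{T}_r(d)$ on the arithmetic scheme $X$ acquires contributions from points of the special fiber for which purity of $W_r\Omega^{\bullet}_{\log}$ is not available in the needed range, and no analogue of Lemma \ref{lemmauzun} is known there. So the step that carries all the content of the conjecture is asserted rather than proved.

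There are two further problems. First, a small but real slip in step one: from $0\r \mathcal{K}^M_{d,X}\xrightarrow{p^r}\mathcal{K}^M_{d,X}\r \mathcal{K}^M_{d,X}/p^r\r 0$ the obstruction to $\CH^d(X)/p^r\r H^d_\Nis(X,\mathcal{K}^M_{d,X}/p^r)$ being surjective is $H^{d+1}_\Nis(X,\mathcal{K}^M_{d,X})[p^r]$, so you need $p^r$-torsion-freeness of $H^{d+1}$, not $p^r$-divisibility (and you must also justify injectivity of multiplication by $p^r$ on the sheaf itself). Second, your final step invokes a ``henselian or formal base change theorem for $\mathcal{T}_r(-)$'' to collapse the tower over the thickenings $X_n$; but Remark \ref{remarkkatoconj51} of the paper states precisely that there is no appropriate base change (nor Artin vanishing) for the $p$-adic \'etale Tate twists --- this is why the paper has to route its own arguments through class field theory instead --- and moreover $\mathcal{T}_r(d)$ is defined on the regular arithmetic scheme $X$, not on the non-reduced schemes $X_n$, so the groups $H^{2d}_\et(X_n,\mathcal{T}_r(d))$ in your third step are not defined as written; the passage from $X$ to the pro-system $\{X_n\}$ has to be made on the level of (improved) Milnor $K$-sheaves or syntomic-type complexes, which is again where the real work lies. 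In short: the proposal reduces an open conjecture to three comparison statements, each of which is either unknown or known to fail in the generality you need.
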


The following theorem and its corollary give some evidence for this conjecture.
\begin{theorem}\label{thm3intext}
Let $d=\mathrm{dim}X-1\leq 2$. Then there is an isomorphism
$$\CH^{d+1}(X,1,\Z/p^r\Z)\r H^{2d+1}_\et(X,\mathcal{T}_r(d+1)).$$
\end{theorem}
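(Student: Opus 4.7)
The plan is to adapt the coniveau spectral sequence argument of Lemma \ref{lemmauzun}(2) to the $p$-adic setting over a henselian discrete valuation ring, using Sato's $p$-adic étale Tate twists $\mathcal{T}_r(d+1)$ in place of $\mu_{p^r}^{\otimes(d+1)}$. First I would establish the $p$-adic analog of Lemma \ref{lemmauzun}(2) by comparing the niveau spectral sequence of Lemma \ref{lemmaconiveauchow} for $\CH^{d+1}(X,\bullet,\Z/p^r\Z)$ with the coniveau spectral sequence for $H^{\ast}_{\et}(X,\mathcal{T}_r(d+1))$ via the cycle class map. On points of the generic fiber the two $E_1$-columns match by purity for $\mu_{p^r}^{\otimes\bullet}$ together with the Bloch-Kato conjecture (a theorem of Voevodsky-Rost); on points of the special fiber they match via the identification $\mathcal{T}_r(n)|_{X_1}\cong W_r\Omega^{n}_{X_1,\mathrm{log}}[-n]$ combined with the Bloch-Kato-Gabber theorem identifying Milnor $K$-theory mod $p^r$ in characteristic $p$ with logarithmic de Rham-Witt sheaves. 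The discrepancy between the two spectral sequences is captured by two rows of Kato-type homology groups, yielding an exact sequence
\[
KH^p_3(X/S)\to \CH^{d+1}(X,1,\Z/p^r\Z)\to H^{2d+1}_{\et}(X,\mathcal{T}_r(d+1))\to KH^p_2(X/S),
\]
where $KH^p_{\bullet}(X/S)$ denotes the $p$-adic Kato homology of $X$ over $S=\Spec(A)$ in the sense of Jannsen-Saito \cite[Sec.~C]{JS03}.

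Second, I would show the vanishing of $KH^p_2(X/S)$ and $KH^p_3(X/S)$ using the localization triangle that relates the Kato complex of $X$ over $S$ to those of the generic fiber $X_K$ and the special fiber $X_1$, analogous to the exact sequence \eqref{localizationsequencekatohomology}. This reduces the question to vanishing of $KH^{(1)}_a(X_K,\Z/p^r\Z)$ for $X_K$ smooth projective of dimension $d\leq 2$ over the local field $K$ and of $KH^{(0)}_a(X_1,\Z/p^r\Z)$ for $X_1$ smooth projective over the residue field of characteristic $p$, in a range depending on $d$. For the special fiber the required vanishing in degrees $a\leq 4$ is the $p$-primary Kato conjecture proved by Jannsen-Saito (as invoked already in Remark \ref{remarkkatoconj51}), which covers everything needed when $d\leq 2$. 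For the generic fiber the Kato complex $KC^{(1)}(X_K,\Z/p^r\Z)$ lives in the narrow range of degrees $0,\dots,d\leq 2$, and the isomorphism $\CH^{d+1}(X_K,1,\Z/p^r\Z)\cong H^{2d+1}_{\et}(X_K,\mu_{p^r}^{\otimes(d+1)})$ from \cite[Thm.~6]{JS12} (used already in the proof of Lemma \ref{lemmafiniteness}(2)) pins down its homology in the relevant degrees.

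The main obstacle is the first step. The comparison of coniveau spectral sequences requires matching the $E_1$-columns across the mixed-characteristic boundary; in particular one must check the compatibility of Sato's $p$-adic cycle class map with the residue maps from points of $X_K$ to points of $X_1$, so that the resulting Kato complex of $X$ over $S$ genuinely coincides with the Jannsen-Saito complex computing $KH^p_{\bullet}(X/S)$. The dimension restriction $d\leq 2$ is essential precisely because the relevant Kato complexes are then short enough that one avoids needing a Lefschetz-type vanishing for $\mathcal{T}_r$, which, as noted in Remark \ref{remarkkatoconj51}, is unavailable for $p$-adic étale Tate twists. Once the exact sequence above is in place, the theorem follows from a diagram chase using the vanishing results cited above.
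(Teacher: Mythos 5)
Your overall architecture matches the paper's: both proofs compare the niveau spectral sequence for the Jannsen--Saito homology theory built from $p$-adic \'etale Tate twists with the one computing higher Chow groups, identify the discrepancy row with a Kato complex (via \cite{JSS}, which is exactly the compatibility issue you flag), and then kill the two relevant Kato homology groups by the localization sequence (\ref{localizationsequencekatohomology}), using the $p$-primary Kato conjecture of Jannsen--Saito for the smooth special fiber. Up to that point your proposal is sound.

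The gap is in your treatment of the generic fiber. For $d=2$ you need both $KH^{(1)}_1(X_K,\Z/p^r\Z)=0$ and $KH^{(1)}_2(X_K,\Z/p^r\Z)=0$, and the isomorphism $\CH^{d+1}(X_K,1,\Z/p^r\Z)\cong H^{2d+1}_{\et}(X_K,\mu_{p^r}^{\otimes (d+1)})$ of \cite[Thm.~6]{JS12} does not ``pin down'' the second group. Feeding that isomorphism into the exact sequence of Lemma \ref{lemmauzun}(1) gives
$$H^{2d}_{\et}(X_K,\Z/p^r\Z(d+1))\r KH^{(1)}_2(X_K,\Z/p^r\Z)\r \CH^{d+1}(X_K,1,\Z/p^r\Z)\xrightarrow{\cong} H^{2d+1}_{\et}(X_K,\Z/p^r\Z(d+1)),$$
so one only learns that $KH^{(1)}_2$ is a quotient of $\mathrm{coker}\bigl(\CH^{d+1}(X_K,2,\Z/p^r\Z)\to H^{2d}_{\et}(X_K,\Z/p^r\Z(d+1))\bigr)$ --- finite, but with no reason to vanish; its vanishing would amount to a surjectivity of the cycle class map in degree $q=2$ that you have not supplied. (This is precisely why Lemma \ref{lemmafiniteness}(2) only concludes finiteness, not vanishing.) The paper closes this step differently: by \cite[Lemma 7.6]{KeS12} the group $KH^{(1)}_a(X_K,\Z/p^r\Z)$ is an extension of $KH^{(1)}_a(X_K,\Q_p/\Z_p)[p^r]$ by $KH^{(1)}_{a+1}(X_K,\Q_p/\Z_p)/p^r$, and \cite[Thm.~1.4, Thm.~1.6]{JS03} identify $KH^{(1)}_a(X_K,\Q_p/\Z_p)$ with $KH^{(0)}_a(X_1,\Q_p/\Z_p)$, which vanishes for $a>0$ since $X$ has good reduction (the dual complex of the special fiber is a point). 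You should replace your generic-fiber argument for $a=2$ by this divisible-coefficient reduction; for $a=1$ your argument via \cite[Thm.~6]{JS12} does work, but it is then no longer needed.
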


\begin{proof}
We use the homology theory defined in Remark \ref{remarkkatoconj51} and consider the associated spectral sequence 
$$E_1^{u,v}=\bigoplus_{x\in X^u}H^{v-u}(x,\Z/p^r\Z(d+1-u))\Rightarrow H^{u+v}_\et(X,\mathcal{T}_r(d+1)).$$
The case $d=1$ is clear. Let us therefore assume that $d=2$. Then 
$$E_2^{2,3}=\CH^{d+1}(X,1,\Z/p^r\Z).$$
We therefore need to show that $E_2^{\bullet,4}=0$. By \cite{JSS}, the complex $E_1^{\bullet,4}$ coincides up to sign with the relevant complex defined by Kato.
Consider the localization sequence (\ref{localizationsequencekatohomology}) 
$$KH^{(0)}_{a+1}(X_{1},\Z/p^r\Z)\r KH^{(0)}_{a+1}(X,\Z/p^r\Z)\r KH^{(1)}_{a}(X_{K},\Z/p^r\Z) $$
for the homology theory.
The group $KH^{(0)}_{a+1}(X_{1},\Z/p^r\Z)$ vanishes for $a\leq 3$ by the results on the Kato conjectures with $p$-coefficients in low degrees cited in Remark \ref{remarkkatoconj51}. For the vanishing of the group $ KH^{(1)}_{d}(X_{K},\Z/p^r\Z)$ consider the short exact sequence
$$0 \to KH_{a+1}^{(1)}(X_{K_v},\Q_p /\Z_p) / p^n \to KH^{(1)}_a(X_{K_v}, \Z/p^n\Z) \to KH^{(1)}_a(X_{K_v}, \Q_p /\Z_p)[p^n] \to 0$$
(\cite[Lemma 7.6]{KeS12}). Now $KH^{(1)}_3(X_{K_v}, \Q_p /\Z_p))$ vanishes for dimension reasons and for $a\leq 2$ we have that 
$ KH^{(1)}_a(X_{K_v}, \Q_p /\Z_p)\cong KH^{(1)}_a(X_{v}, \Q_p /\Z_p)=0$
by \cite[Thm. 1.6]{JS03}. 
\end{proof}

Note that due to the twist by $d+1$ we have full purity for the logarithmic deRham-Witt sheaves. This seems to be the main difference to the zero-cycle case.

\section{A finiteness theorem for arithmmetic schemes}\label{sectionfiniteness}
A version of Bass' finiteness conjecture predicts that for a regular scheme $X$ of finite type over $\Z$, the groups
$$\CH^r(X,q)$$
are finitely generated. This conjecture is known to hold for $r=1$ or dim$(X)=1$ by results of Quillen. In arbitrary dimension $d$ there are few results. If $X$ proper and flat over $\Spec\Z$, then $\CH^{d}(X)$ is finite (see \cite{Sa85}) by unramified class field theory. If $X$ smooth projective over a finite field, then the groups $\CH^{d+j}(X,j)$ are finite for $j\geq 0$ by unramified class field theory (see \cite{CSS83} and \cite{KaS86} for $j=0$ and \cite[Sec. 6]{Lu17} for $j\geq 1$).  

As an application of the Kato conjectures, Kerz and Saito show that for any quasi-projective scheme $X$ of dimension $d$ over a finite field $k$ and $n$ invertible on $X$,  the groups 
$$\CH^d(X,q,\Z/n\Z)$$
are finite for all $q\geq 0$ (see \cite[Cor. 9.4]{KeS12}). In the following theorem we establish some small-degree cases for arithmetic schemes (see also \cite[Sec. 7.2]{Ge10}):

\begin{theorem}\label{theoremfinitenessintext}
	Let $K$ be a global field with ring of integers $\O_K$, let $U \subset \Spec \O_K$ be open, nonempty and let $n \in \N_{>1}$ be invertible on $U$.
	Let $X$ be a regular connected scheme, proper and flat over $U$ with smooth generic fiber $X_K$ and let $d = \dim(X) -1 = \dim(X)$.
	If $K$ is a number field assume furthermore that either $n$ is odd or that $K$ has no real places. 
	\begin{enumerate}
		\item Suppose that for all places $v$ of $K$ dividing $n$, $X_{K_v}$ satisfies $(\star)$.		
		Then the groups
		$$\CH^{d+1}(X,a,\Z/n\Z)$$
		are finite for all $1\geq a\geq 0$.
		\item Suppose $d = 2$ and for all places $v$ of $K$ dividing $n$, $X_{K_v}$ satisfies $(\star\star)$. Then the groups $$\CH^{d+1}(X,a,\Z/n\Z)$$ are finite for all $a \geq 0$.
	\end{enumerate}
		
\end{theorem}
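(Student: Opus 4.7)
The plan is to sandwich $\CH^{d+1}(X,a,\Z/n\Z)$ between an \'etale cohomology group and two Kato homology groups, all of which I will then show to be finite. First, I would apply Lemma~\ref{lemmauzun}(2) to $X$ over $U$ with $c=0$; this is legitimate because, under the hypotheses on $K$ and $n$, we have $cd_n(K)=2$, and $cd_n(k(v))=1$ for every closed point $v\in U$, whose residue field is finite. The resulting four-term exact sequence
\[
KH^{(0)}_{a+2}(X,\Z/n\Z)\r \CH^{d+1}(X,a,\Z/n\Z)\r H^{2d+2-a}_{\et}(X,\Z/n\Z(d+1))\r KH^{(0)}_{a+1}(X,\Z/n\Z)
\]
reduces the finiteness of the middle term to that of its two neighbours.

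The \'etale cohomology group is finite by the standard finiteness theorems of Deligne and Gabber: $X$ is proper over the open $U\subset\Spec\O_K$ with $n$ invertible on $U$, so $H^{i}_{\et}(X,\Z/n\Z(j))$ is finite for all $i,j$. For the Kato homology, I would proceed exactly as in the proof of Theorem~\ref{maintheorem2intext}: Theorem~\ref{ThmKatoconj05} gives $KH^{(0)}_{\bullet}(X/U,\Z/n\Z)=0$ in non-negative degrees, and feeding this into the long exact sequence coming from the cone triangle defining $KC^{(0)}(X/U)$ produces an identification
\[
KH^{(0)}_b(X,\Z/n\Z)\;\cong\;\bigoplus_{v\in \Sigma_U}KH^{(1)}_{b-1}(X_{K_v},\Z/n\Z)
\]
for $b\geq 1$, where $\Sigma_U$ denotes the finite set consisting of the closed points of $\Spec\O_K\setminus U$ together with, in the number field case, the archimedean places.

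It thus suffices to establish finiteness of each local group $KH^{(1)}_{b-1}(X_{K_v},\Z/n\Z)$ for $b\in\{a+1,a+2\}$. Archimedean places contribute nothing, either by Lemma~\ref{lemmaarchimedeanvanishing} when $n$ is odd, or because $K_v=\mathbb{C}$ when $K$ has no real places. For non-archimedean $v$ with $v\nmid n$, Lemma~\ref{lemmafiniteness}(1) yields finiteness in every degree. The only non-trivial contribution comes from the finitely many places $v\mid n$: in case~(1), the hypothesis $(\star)$ together with Lemma~\ref{lemmafiniteness}(2) yields finiteness for $b-1\leq 2$, which is exactly the range forced by $a\in\{0,1\}$; in case~(2), $d=2$ combined with $(\star\star)$ lets me invoke Lemma~\ref{lemmafiniteness}(3), which provides finiteness in all degrees, and the argument then goes through for arbitrary $a$.

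The main obstacle is therefore precisely the $p$-torsion local Kato homology at the bad places $v\mid n$. Away from such places the result is essentially immediate from the Kato conjectures of Kerz--Saito and Jannsen together with standard \'etale finiteness. The restriction to $a\leq 1$ in part~(1) reflects exactly the current state of knowledge of $p$-adic local Kato homology under $(\star)$, namely degrees $\leq 2$; strengthening $(\star)$ to $(\star\star)$ in dimension two removes this restriction and yields~(2). Beyond this point, the argument is merely index-bookkeeping in the cone-localization sequence and a diagram chase.
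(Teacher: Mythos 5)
Your proposal follows the paper's own argument essentially verbatim: the Jannsen--Saito type sequence from Lemma~\ref{lemmauzun}(2) with $c=0$, finiteness of the \'etale cohomology of $X/U$, reduction of $KH^{(0)}_{\bullet}(X,\Z/n\Z)$ to the local Kato homology groups at places outside $U$ via Theorem~\ref{ThmKatoconj05} and the cone sequence, and then Lemma~\ref{lemmafiniteness} at the remaining places. It is correct (indeed slightly more explicit than the paper about the archimedean places and the places away from $n$); the only nitpick is that in the lowest degree the map $KH^{(0)}_b(X)\to\bigoplus_v KH^{(1)}_{b-1}(X_{K_v})$ need only be injective rather than an isomorphism, which is all that finiteness requires.
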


\begin{proof} By Lemma \ref{lemmauzun}(2) there is an exact sequence
	\begin{equation}\label{JSexactsequence}
	\begin{split}
	...\r KH^{(0)}_{q+2}(X,\Z/n\Z)\r \CH^{d+1}(X,q,\Z/n\Z)\r H_{\text{\'et}}^{2d+2-q}(X,\Z/n\Z(d+1))\\
	\r KH^{(0)}_{q+1}(X,\Z/n\Z)\r \CH^{d+1}(X,q-1,\Z/n\Z)\r...
	\end{split}
	\end{equation}
	By \cite[Ch. II, 7.1]{Mi86}, the \'etale cohomology groups $H_{\text{\'et}}^{2d+2-q}(X,\Z/n\Z(d))$ are known to be finite for $n$ invertible on $U$. It therefore suffices to show the finiteness of $KH^{(0)}_{q+2}(X,\Z/n\Z)$. By definition we have an exact sequence 
	$$...\r KH^{(0)}_{a}(X/U,\Z/n\Z)\r KH^{(0)}_{a}(X,\Z/n\Z)\r \bigoplus_{v\in \sum_U}KH^{(1)}_{a-1}(X_{K_v},\Z/n\Z))\r ..$$
	By Theorem \ref{ThmKatoconj05}, $KH^{(0)}_{a}(X/U,\Z/n\Z)$ vanishes for $a>0$ and is isomorphic to $\Z/n\Z$ for $a=0$. The statement now follows from Lemma \ref{lemmafiniteness}.
\end{proof}

\bibliographystyle{plain}
\bibliography{Bibliografie}

\end{document}